\documentclass{amsart}
\usepackage{appendix}
\usepackage{tabularx}
\usepackage[numbers,sort&compress]{natbib}
\usepackage{color}
\usepackage{tikz}
\usetikzlibrary{shapes,arrows}

\tikzstyle{decision} = [diamond, draw, fill=blue!20, 
text width=6em, text badly centered, node distance=3cm, inner sep=0pt]
\tikzstyle{block} = [rectangle, draw, fill=blue!20, 
text width=9em, text centered, rounded corners, minimum height=4em]
\tikzstyle{block2} = [rectangle, draw, fill=yellow!20, 
text width=9em, text centered, rounded corners, minimum height=4em]
\tikzstyle{line} = [draw, -latex']
\tikzstyle{cloud} = [draw, ellipse,fill=red!20, node distance=3cm,
minimum height=4em]

\makeatletter

\newcommand{\Rmnum}[1]{\expandafter\@slowromancap\romannumeral #1@}
\makeatother

\newtheorem{theorem}{Theorem}[section]
\newtheorem{lemma}[theorem]{Lemma}

\theoremstyle{definition}
\newtheorem{definition}[theorem]{Definition}

\theoremstyle{remark}
\newtheorem{remark}[theorem]{Remark}

\numberwithin{equation}{section}
\allowdisplaybreaks[4]

\begin{document}


\tikzstyle{block} = [rectangle, draw, 
text width=5.5em, text centered, rounded corners, minimum height=2em]

\title{long-time existence and uniqueness of the heterotic $G_2$ flow}


\author{Chuanhuan Li}
\address{ Shanghai Institute for Mathematics and Interdisciplinary Sciences (SIMIS), Shanghai 200433, China \newline
${\quad}$ Research Institute of Intelligent Complex Systems, Fudan University, Shanghai 200433, China}
\curraddr{}
\email{chli@simis.cn}
\thanks{}

\author{Yi Li}
\address{Center for Mathematics and Interdisciplinary Sciences, Fudan University, Shanghai 200433, China \newline
${\quad}$ Shanghai Institute for Mathematics and Interdisciplinary Sciences (SIMIS), Shanghai 200433, China}
\curraddr{}
\email{yilicms@simis.cn, yilicms@gmail.com}

\author{Weijie Miao}
\address{Center for Mathematics and Interdisciplinary Sciences, Fudan University, Shanghai 200433, China  \newline
${\quad}$ Shanghai Institute for Mathematics and Interdisciplinary Sciences (SIMIS), Shanghai 200433, China }
\curraddr{}
\email{25114020012@fudan.edu.cn}
\thanks{}

\subjclass[2020]{Primary 53E99,
53C27
}

\keywords{}

\date{}

\dedicatory{}

\begin{abstract}
   In this paper, we prove the long-time existence of the heterotic $G_2$ flow with bounded Ricci curvature, torsion tensor and dilaton. Besides, we also get the forward and backward uniqueness results for the solution.
   
\end{abstract}

\maketitle

\section{Introduction}

Geometric flows play a crucial role in the study of geometric structures. The core idea is to start with a general geometric structure on a manifold and then utilize a flow to obtain a more specialized structure. Let $M$ be a $7$-dimensional manifold. Considering a closed $G_{2}$-structure on $M$, Bryant \cite{Bryant 2006} introduced the following Laplacian  flow:
\begin{equation}
  \left \{
       \begin{array}{rl}
          \partial_{ t}\varphi(t)&=\Delta_{\varphi(t)}\varphi(t),\\
           \varphi(0)&=\varphi,
       \end{array}
  \right.
  \label{The closed Laplacian flow}
\end{equation}
where $\Delta_{\varphi(t)}=dd^{\ast}_{\varphi(t)}+d^{\ast}_{\varphi(t)}d$ is the Hodge Laplacian of $g(t)$ and $\varphi$ is an initial closed $G_{2}$-structure. Here $g(t)$ is the associated Riemannian metric of $\varphi(t)$.
This flow  $\eqref{The closed Laplacian flow}$ is the gradient flow of the Hitchin functional \cite{Hitchin 2000}, whose critical points are torsion-free $G_{2}$-structure.
Bryant and Xu \cite{Bryant-Xu 2011} proved short-time existence and uniqueness within a fixed cohomology class.  Lotay and Wei \cite{Lotay-Wei Shi-estimate, L-W 2019, L-W 2019 2} established foundational analytic results, including derivative estimates, blow‑up characterisation, dynamical stability, and real analyticity; subsequent works \cite{Fine-Yao 2018, LL G2 flow, Picard-Suan} addressed blow‑up under bounded Ricci curvature, scalar curvature, or torsion.


For the coclosed $G_{2}$-structure, Karigiannis, McKay, and Tsui \cite{KMT2012} proposed  Laplacian coflow (with the opposite sign):
\begin{equation}
  \left \{
       \begin{array}{rl}
          \partial_{ t}\psi(t)&=\Delta_{\psi(t)}\psi(t),\\
           \psi(0)&=\psi,
       \end{array}
  \right.
  \label{The coflow}
\end{equation}
where $\Delta_{\psi(t)}$ is the Hodge Laplacian induced by $\psi(t)$ and $\psi$ is an initial coclosed $G_{2}$-structure. However, this equation is not weakly parabolic and its principal symbol is indefinite. To remedy this,  Grigorian \cite{Grigorian-2013} introduced the modified Laplacian coflow:
\begin{equation}
  \left \{
       \begin{array}{rl}
          \partial_{ t}\psi(t)&=\Delta_{\psi(t)}\psi(t)+2d\left[\left(A-{\rm tr} \big{(}\mathbf{T}(t)\big{)}\right)\varphi(t)\right],\\
           \psi(0)&=\psi,
       \end{array}
  \right.
  \label{modified Laplacian coflow}
\end{equation}
where ${\rm tr}\big{(}\mathbf{T}(t)\big{)}$ is the trace of the torsion $\mathbf{T}(t)$, $A$ is a nonnegative constant. {If restricted to coclosed $G_{2}$-structures($d\psi(t)=0$), then the modified Laplacian coflow $\eqref{modified Laplacian coflow}$ preserves the cohomology class of $\psi$. Under this condition,} Grigorian proved the short-time existence and uniqueness of the modified Laplacian coflow in \cite{Grigorian-2013}. Later, Chen \cite{Chen Shi-estimates} derived Shi‑type estimates and studied finite‑time singularities; Bedulli and Vezzoni \cite{BV2020} proved stability; Real-analyticity was proved by the authors in \cite{LL26B}. However, there is no characterization of the fixed points of modified coflow. Motivated by Ashmore, Minasian and Proto \cite{AMP24}, Karigiannis-Picard-Suan\cite{KPS26} and Garcia-Fernandez-Moreno-Payne-Streets\cite{GMPS26} independently posed a novel geometric flow of conformally coclosed $G_{2}$-structures with dilaton. The flow in \cite{KPS26} focuses on the dimensional reduction of the six-dimensional anomaly flow \cite{PPZ18A,PPZ18B} and a special dilaton function satisfies
$$e^{4f}=e^{4f_{0}}\frac{{\rm Vol}_{\varphi}}{{\rm Vol}_{0}},$$
where $f_{0}$ and ${\rm Vol}_{0}$ are the initial conditions. The flow in \cite{GMPS26} focuses on the dilaton function satisfying an evolution equation. In this paper, we study 
the heterotic $G_{2}$ flow in \cite{GMPS26}: 
\begin{equation}
  \left \{
       \begin{array}{rl}
           \partial_{t}(e^{-4\phi}\psi(t))&=-d H_{\varphi}-\displaystyle\frac{7}{3}d(\tau_{0}\varphi),\\
           \partial_{t}\phi&=e^{4\phi}\left(\Delta\phi-\gamma|d\phi|^{2}+\frac{1}{4}|\tau_{3}|^{2}-\sigma\tau_{0}^{2}\right),
       \end{array}
  \right.
  \label{heterotic flow}
\end{equation}
where  $\varphi(t)$ is the $G_{2}$-structures and $\psi(t)$ is its dual $4$-form, $\phi(t)\in C^{\infty}(M,\mathbb{R})$ is a smooth dilaton function, $H_{\varphi}$ is the torsion of an affine connection $\nabla^{+}$ with respect to $\varphi$ satisfies
$$H_{\varphi}=\frac{1}{6}\tau_{0}\varphi-\tau_{1}\lrcorner\psi-\tau_{3},$$
$\tau_{0},\tau_{1}$ and $\tau_{3}$ are intrinsic torsion forms of the $G_{2}$-structure $\varphi$, $\sigma,\gamma$ are constants. The conformally coclosed condition is preserved along this flow. The authors of \cite{GMPS26} prove short-time existence, Shi-type smoothing, and fixed-point classification for this flow, which also admits a monotonicity formula for the $G_{2}$-dilaton functional-together with the Shi-type estimates, this monotonicity yields convergence of nonsingular solutions.

\begin{remark}
    In \cite{KPS26}, the dilaton is determined by the volume ratio rather than governed by an independent evolution equation. Short-time existence is established only for $C<1$, while torsion-free fixed points can be concluded only for  $C>\frac{10}{7}$, which lies outside the existence range. 
\end{remark}

The authors of \cite{GMPS26} employ the Shi-type estimates to prove the following long-time existence theorem for the heterotic $G_{2}$ flow $\eqref{heterotic flow}$.
\begin{theorem}[Corollary 1.4,\cite{GMPS26}]\label{Thm1.2}
    Let $(\psi(t),\phi(t))_{t\in[0,T)}$ be a solution to the heterotic $G_{2}$ flow $\eqref{heterotic flow}$ on a compact manifold $M$ with initial condition $(\varphi_{0},\phi_{0})$ satisfying $d(e^{-4\phi_{0}}\psi_{0})=0$, where $T<+\infty$. If there exists a constant $\Lambda>0$ such that
$$|{\rm Rm}|+|\nabla\mathbf{T}|+|\mathbf{T}|^{2}+|\phi|\leq \Lambda,\quad\quad\quad\quad \text{on}\ M\times[0,T),$$
then the flow $\eqref{heterotic flow}$ can be extended past time $T$.
\end{theorem}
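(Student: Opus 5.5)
The plan follows the standard Hamilton–Shi blow-up argument for parabolic geometric flows, adapted to the coupled system for $(\psi(t),\phi(t))$ in \eqref{heterotic flow}. We use three ingredients from \cite{GMPS26}: the short-time existence theorem for initial data with $d(e^{-4\phi_0}\psi_0)=0$, the preservation of the conformally coclosed condition, and the Shi-type smoothing estimates. The goal is to show that under the assumed bound $|{\rm Rm}|+|\nabla\mathbf{T}|+|\mathbf{T}|^2+|\phi|\le\Lambda$, the family $(\varphi(t),\phi(t))$ converges smoothly as $t\to T$ to a limit $(\varphi_T,\phi_T)$ that is again an admissible initial datum. Restarting the flow at $\varphi_T$ and gluing then contradicts maximality of $T$.

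\textbf{Step 1 (higher-order bounds).} The Shi-type estimates apply on $[T/2,T)$ with constants depending only on $\Lambda$, $T$ and the lower-order data. They give, for every $k\ge0$,
\[
|\nabla^k{\rm Rm}|+|\nabla^{k+1}\mathbf{T}|+|\nabla^{k+1}\phi|\le C_k \quad\text{on } M\times[T/2,T).
\]
The estimate for $\phi$ has to be included: the dilaton equation is quasilinear with diffusion coefficient $e^{4\phi}$, and $|\phi|\le\Lambda$ makes this coefficient uniformly bounded above and below. Hence Bernstein-type arguments, coupled with the estimates for ${\rm Rm}$ and $\mathbf{T}$, control all derivatives of $\phi$.

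\textbf{Step 2 (metric equivalence and convergence).} Expanding $\partial_t(e^{-4\phi}\psi)$ and using $\partial_t\phi$, we write $\partial_t\varphi$ (equivalently $\partial_t g$) as a tensor built from ${\rm Rm}$, $\nabla\mathbf{T}$, $\mathbf{T}*\mathbf{T}$, $\nabla^2\phi$, $d\phi*\mathbf{T}$, $|d\phi|^2$ and $e^{\pm4\phi}$. The bounds from Step 1 give $|\partial_t g|_{g(t)}\le C$, so the metrics $g(t)$ are uniformly equivalent to $g(T/2)$ on $[T/2,T)$. Next we convert $|\nabla^k\partial_t\varphi|_{g(t)}\le C_k$ into bounds measured with the fixed background connection $\nabla_{g(T/2)}$. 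This is done inductively, using that $\partial_t(\nabla_{g(t)}-\nabla_{g(T/2)})$ is bounded by $|\nabla\partial_t g|$. The standard Hamilton argument then yields $\varphi(t)\to\varphi_T$ and $\phi(t)\to\phi_T$ in $C^\infty$, with $\varphi_T$ still a positive $3$-form, hence a $G_2$-structure. Taking limits in $d(e^{-4\phi(t)}\psi(t))=0$ gives $d(e^{-4\phi_T}\psi_T)=0$.

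\textbf{Step 3 (extension).} We apply short-time existence at $(\varphi_T,\phi_T)$. The concatenated family is smooth across $t=T$, since all time derivatives converge, as one sees by differentiating the equation. This produces an extension past $T$.

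The main obstacle is Step 1, specifically closing the Shi-type estimates for the coupled system. The flow of $\psi$ is only weakly parabolic modulo diffeomorphisms, and derivatives of $\phi$ enter the evolution of ${\rm Rm}$ and $\mathbf{T}$ at top order. The approach would be to run the Shi maximal-principle argument on a combined quantity
\[
F_k=t^k\big(|\nabla^k{\rm Rm}|^2+|\nabla^{k+1}\mathbf{T}|^2+|\nabla^{k+2}\phi|^2\big)
\]
plus a large multiple of $F_{k-1}$. The good terms $-|\nabla^{k+1}\cdot|^2$, with coefficient bounded below by $e^{-4\Lambda}$, should absorb the cross terms. If this is exactly what \cite{GMPS26} provides, Step 1 reduces to citing it.
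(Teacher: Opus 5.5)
Your outline is correct and follows essentially the route the paper relies on. The paper gives no proof of its own: it quotes the result as Corollary~1.4 of \cite{GMPS26}, obtained from the Shi-type estimates recorded here as Lemma~\ref{lemma3.3} together with the standard smooth-convergence-and-restart argument, which is your Steps~1--3. One simplification: under $d(e^{-4\phi}\psi)=0$ one has $\tau_1=d\phi$, so by \eqref{dilatoncontraction}--\eqref{dilatonhessian} every derivative $\nabla^{m+1}\phi$ is algebraically controlled by $\mathbf{T},\dots,\nabla^{m}\mathbf{T}$, and no separate Bernstein-type argument for the dilaton is needed.
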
 

Motivated by the method in \cite{CLN06} and \cite{Lotay-Wei Shi-estimate}, by pulling back the metric via a smooth family of diffeomorphisms, we can remove the bound on $\nabla \mathbf{T}$ and replace the full curvature bound by a Ricci curvature bound in Theorem \ref{Thm1.2}.
\begin{theorem}\label{thm1.3}
Let $(\psi(t),\phi(t))_{t\in[0,T)}$ be a solution to the heterotic $G_{2}$ flow $\eqref{heterotic flow}$ on a compact manifold $M$ with initial condition $(\varphi_{0},\phi_{0})$ satisfying $d(e^{-4\phi_{0}}\psi_{0})=0$, where $T<+\infty$. If there exists a constant $\Lambda>0$ such that
$$|{\rm Ric}|+|\mathbf{T}|^{2}+|\phi|\leq \Lambda,\quad\quad\quad\quad \text{on}\ M\times[0,T),$$
then the flow $\eqref{heterotic flow}$ can be extended past time $T$.
\end{theorem}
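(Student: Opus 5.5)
The plan is to reduce Theorem \ref{thm1.3} to Theorem \ref{Thm1.2}. It suffices to show that, under $|{\rm Ric}|+|\mathbf{T}|^{2}+|\phi|\leq \Lambda$ on $M\times[0,T)$, both $|{\rm Rm}|$ and $|\nabla\mathbf{T}|$ stay bounded on $M\times[0,T)$; Theorem \ref{Thm1.2} then extends the flow past $T$. I would argue by contradiction, following the blow-up strategy of \cite{Lotay-Wei Shi-estimate} and the Ricci-flow argument in \cite{CLN06}.

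\textbf{Step 1 (rescaled limit).} Set
$$\Lambda(t)=\sup_{M\times[0,t]}\left(|{\rm Rm}|^{2}+|\nabla\mathbf{T}|^{2}+|\mathbf{T}|^{4}\right)^{1/2}$$
and suppose $\Lambda(t)\to\infty$ as $t\to T$. Choose points $(x_i,t_i)$ with $t_i\to T$ that (almost) realize $\Lambda(t_i)=:Q_i$. Parabolically rescale:
$$g_i(s)=Q_i\,g(t_i+s/Q_i),\qquad \varphi_i=Q_i^{3/2}\varphi,\qquad \phi_i(s)=\phi(t_i+s/Q_i).$$
The dilaton is scale invariant, so it stays bounded by $\Lambda$. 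The pair (conformally coclosed $G_2$-structure, dilaton) should again solve \eqref{heterotic flow}. The factor $e^{4\phi}$ in the dilaton equation is harmless because $|\phi|$ is bounded; I would still check how each term scales. On $[-Q_it_i,0]$ the rescaled quantities satisfy $|{\rm Rm}_i|+|\nabla\mathbf{T}_i|+|\mathbf{T}_i|^2\le C$, with equality (up to a constant) at $(x_i,0)$. Meanwhile
$$|{\rm Ric}_i|\le \Lambda/Q_i\to0,\qquad |\mathbf{T}_i|^2\le \Lambda/Q_i\to 0.$$
The Shi-type estimates of \cite{GMPS26}, applied to the rescaled flow, give bounds on all higher derivatives of ${\rm Rm}_i$, $\mathbf{T}_i$ and $\phi_i$ on $[-1,0]$.

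\textbf{Step 2 (injectivity radius and compactness).} I need a uniform injectivity radius lower bound at $(x_i,0)$. Since $T<\infty$ and the flow velocity of the metric is controlled, I would obtain it from a volume noncollapsing estimate: the evolution of the volume form involves only $\mathbf{T}$, $\nabla\mathbf{T}$ and $d\phi$-type terms. If this is too weak, I would instead use a diffeomorphism-pullback (DeTurck-type) argument as suggested in the introduction to gain uniform control. A Cheeger--Gromov--Hamilton compactness theorem for $G_2$-structure flows (as in \cite{Lotay-Wei Shi-estimate}) then gives a limit complete flow $(M_\infty,\varphi_\infty(s),\phi_\infty(s))$ for $s\in(-\infty,0]$.

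\textbf{Step 3 (contradiction).} The limit has $\mathbf{T}_\infty\equiv0$, so it is torsion-free. Hence ${\rm Rm}_\infty$ is determined by a Ricci-flat $G_2$ metric, since ${\rm Ric}_\infty=0$ and ${\rm Ric}$ is expressed through $\mathbf{T},\nabla\mathbf{T}$. Also $\nabla\mathbf{T}_\infty=0$, and $|{\rm Rm}_\infty|(x_\infty,0)\approx 1$. To reach a contradiction I would show the limit is flat. In the torsion-free case the dilaton equation becomes a heat-type equation, and the $\psi$-equation reduces to $\partial_s(e^{-4\phi}\psi)=0$; this makes the flow static up to the dilaton. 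The flatness should then come from volume growth: the rescaled metrics satisfy $\mathrm{Vol}\,B(x,r)\ge \kappa r^7$ on the scale $Q_i^{-1/2}$ and, by Bishop--Gromov with $|{\rm Ric}|\le\Lambda$, on all scales in the limit. A complete Ricci-flat manifold with Euclidean volume growth and bounded curvature need not be flat, so I expect this to be the main obstacle. What I would try first is to avoid needing flatness. The $G_2$ identity expresses ${\rm Ric}$ and the relevant part of ${\rm Rm}$ via $\nabla\mathbf{T}$ and $\mathbf{T}*\mathbf{T}$, as in Lotay--Wei's argument. I would first bound $|\nabla\mathbf{T}|$ in terms of $|{\rm Ric}|+|\mathbf{T}|^2$ plus ${\rm Rm}$, then bound $|{\rm Rm}|$ via the Ricci-flow-type argument of \cite{CLN06}: pulling back by diffeomorphisms generated by the $\nabla\mathbf{T}$-type vector field in the metric evolution turns it into $\partial_tg=-2{\rm Ric}+\mathbf{T}*\mathbf{T}+\phi$-terms. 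In that gauge the metrics are uniformly equivalent (by $|{\rm Ric}|+|\mathbf{T}|^2\le\Lambda$ and $T<\infty$), and a Sesum-type argument produces the contradiction.
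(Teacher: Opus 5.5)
Your overall strategy matches the paper's: contradiction with Theorem~\ref{Thm1.2}, parabolic blow-up where $|{\rm Rm}|+|\nabla\mathbf{T}|$ is almost maximal, a diffeomorphism gauge, and a Ricci-flat limit with $\mathbf{T}_\infty\equiv0$ and $|{\rm Rm}_\infty(x_\infty,0)|=1$. But the proposal stalls at the decisive step, and there is a genuine gap there.

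You correctly note that a complete Ricci-flat manifold with $\mathrm{Vol}\,B(x,r)\ge\kappa r^7$ need not be flat. You then defer to an unspecified ``Sesum-type argument''. That argument \emph{is} the flatness argument you are trying to avoid. Your intermediate sub-step, bounding $|\nabla\mathbf{T}|$ by $|{\rm Ric}|+|\mathbf{T}|^2$ and ${\rm Rm}$, is unproved and also unnecessary: $\nabla\mathbf{T}_i\to0$ already follows from $|\mathbf{T}_i|\to0$ plus the Shi-type bounds (the paper uses Lemma~\ref{lemma3.4}).

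The missing idea is that the blow-up limit has \emph{exactly} Euclidean volume ratio at every scale. It comes out as follows.
\begin{itemize}
\item Pull back by the flow $\chi_t$ of $W_t=\nabla e^{4\phi}=4e^{4\phi}\nabla\phi$. The Lie-derivative term cancels $-8e^{4\phi}\nabla^2\phi$ and leaves $\partial_t\hat g=\chi_t^*[e^{4\phi}(-2{\rm Ric}+E+32\,d\phi\otimes d\phi)]$. Hence $|\partial_t\hat g|_{\hat g}\le L$ by the hypothesis and \eqref{Ebound}.
\item For $\tau<T$ and $t\in[\tau,T)$ this gives $e^{-L(T-\tau)}\hat g(\tau)\le\hat g(t)\le e^{L(T-\tau)}\hat g(\tau)$.
\item The fixed smooth metric $\hat g(\tau)$ has small-ball volume ratios tending to $1$ uniformly in the centre. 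So the ratios $\mathrm{Vol}\,B_{g(t_i)}(x_i,r/\sqrt{Q_i})/\omega_7(r/\sqrt{Q_i})^7$ have $\liminf$ and $\limsup$ in $[e^{-7L(T-\tau)},e^{7L(T-\tau)}]$. These equal the $\hat g(t_i)$-ratios at $y_i=\chi_{t_i}^{-1}(x_i)$, since $\chi_{t_i}$ is an isometry.
\item Letting $\tau\to T$, the ratios tend to $1$ for each fixed $r$ (Lemma~\ref{lemma3.2}).
\end{itemize}
Uniform equivalence with the fixed constant $e^{LT}$, which is all you invoke, gives only the two-sided bound you already found insufficient. What matters is that the equivalence constant tends to $1$ as $\tau\to T$ while the radii $r/\sqrt{Q_i}\to0$.

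With this, $\mathrm{Vol}\,B_{g_\infty(0)}(x_\infty,r)=\omega_7r^7$ for all $r>0$. The equality case of Bishop--Gromov for the complete Ricci-flat $g_\infty(0)$ then forces flatness, contradicting $|{\rm Rm}_\infty(x_\infty,0)|=1$.

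The same volume-ratio statement, together with $|{\rm Rm}_i|\le1$, also gives the injectivity-radius bound in your Step~2 via Cheeger--Gromov--Taylor. Your first suggestion there cannot work: the evolution of $g$ and of its volume form contains $\nabla^2\phi$, i.e.\ $\nabla\mathbf{T}$ by \eqref{dilatonhessian}, which the hypotheses do not control. Only the gauge-fixed $\hat g$ is uniformly equivalent.

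Finally, in Step~3 Bishop--Gromov cannot push a volume lower bound from the unit scale up to all larger scales. Monotonicity transfers lower bounds from large radii to small ones, not the reverse. The all-scales statement must come from applying the uniform-equivalence argument at each fixed rescaled radius $r$, as above.
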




We also study the uniqueness of the solution to the heterotic $G_{2}$ flow $\eqref{heterotic flow}$. Motivated by \cite{Kot14}, we give a new proof of forward uniqueness, and prove the backward uniqueness by using the method in \cite{Kot10}.
\begin{theorem}
    Suppose $(\psi(t),\phi(t)),(\tilde{\psi}(t),\tilde{\phi}(t))$ are  two solutions to the heterotic $G_{2}$ flow $\eqref{heterotic flow}$ on a compact manifold $M$ for $t\in[0,\epsilon],\epsilon>0$. If $$(\psi(t),\phi(t))=(\tilde{\psi}(t),\tilde{\phi}(t))$$
    for some $t\in[0,\epsilon]$, then $(\psi(s),\phi(s))=(\tilde{\psi}(s),\tilde{\phi}(s))$ for all $s\in[0,\epsilon]$.
\end{theorem}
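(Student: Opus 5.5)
The statement combines two claims: \emph{forward uniqueness} (agreement at an earlier time propagates forward) and \emph{backward uniqueness} (agreement at a later time propagates backward). Since both solutions are smooth on the compact manifold $M\times[0,\epsilon]$, all curvature, torsion and dilaton quantities, and their covariant derivatives, are uniformly bounded. If the solutions agree at some $t_0\in[0,\epsilon]$, the forward argument gives agreement on $[t_0,\epsilon]$ and the backward argument gives agreement on $[0,t_0]$. Throughout I work in Kotschwar's energy framework.

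The first step is to reduce \eqref{heterotic flow} to a system for the metric and the $G_2$-structure. Expanding $\partial_t(e^{-4\phi}\psi)$ and using the standard first-order variation formulas for $G_2$-structures, one gets
$$\partial_t g=e^{4\phi}\big(-2{\rm Ric}+Q(\mathbf T,\nabla\phi,\nabla^2\phi)\big),\qquad \partial_t\varphi=e^{4\phi}\big(\Delta\text{-type terms}+\text{l.o.t.}\big).$$
The coupled dilaton equation is a scalar parabolic equation. The system is only weakly parabolic because of diffeomorphism invariance, so DeTurck's trick cannot be used directly for uniqueness.

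\textbf{Forward uniqueness.} Following \cite{Kot14}, I avoid the harmonic map heat flow and instead build an auxiliary prolonged system. Set
$$h=g-\tilde g,\quad A=\nabla-\tilde\nabla,\quad S=\varphi-\tilde\varphi,\quad u=\phi-\tilde\phi,$$
together with the differences of their relevant first and second covariant derivatives, such as ${\rm Rm}-\widetilde{\rm Rm}$, $\nabla\mathbf T-\tilde\nabla\tilde{\mathbf T}$ and $\nabla^2\phi-\tilde\nabla^2\tilde\phi$. Call the parabolic components $X$ and the ODE components $Y$. Computing evolutions, one obtains
$$|(\partial_t-e^{4\phi}\Delta)X|\le C\big(|X|+|\nabla X|+|Y|\big),\qquad |\partial_t Y|\le C\big(|X|+|\nabla X|+|Y|\big).$$
The factor $e^{4\phi}$ is harmless because $\phi$ is bounded. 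Now define
$$\mathcal E(t)=\int_M\big(|X|^2+|Y|^2\big)\,d\mu_{g(t)}.$$
Integrating by parts and absorbing $|\nabla X|^2$ yields $\mathcal E'\le C\mathcal E$. Since $\mathcal E(t_0)=0$, Gronwall gives $\mathcal E\equiv 0$ on $[t_0,\epsilon]$.

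\textbf{Backward uniqueness.} Following \cite{Kot10}, I use the same $X,Y$ and prove a Carleman-type, log-convexity estimate for a PDE--ODE system on a compact manifold. The key quantity is the frequency
$$N(t)=\frac{\int|\nabla X|^2}{\int|X|^2},$$
and one shows either that $\log\int|X|^2$ is controlled backwards or that $X\equiv0$ forces $Y\equiv0$. Kotschwar's abstract theorem requires the inequalities above, together with a bound $|\partial_t g|\le C$ and backward-in-time bounds; both hold by smoothness on $[0,\epsilon]$. Once $h\equiv0$, $A\equiv0$, $S\equiv0$ and $u\equiv0$, it follows that $\psi=\tilde\psi$ and $\phi=\tilde\phi$.

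\textbf{Main obstacle.} The hardest step is the algebraic one: deriving the evolution equations of the difference quantities and confirming that every higher-order term is either Laplacian-type in a parabolic component or bounded by $|X|+|\nabla X|+|Y|$. The difficulty comes from the torsion terms $dH_\varphi$ and $d(\tau_0\varphi)$, from the $e^{-4\phi}$ conformal coupling, and from the $\nabla^2\phi$ terms in the metric evolution. These $\nabla^2\phi$ terms force the inclusion of $\nabla^2 u$-type components and of $\nabla\mathbf T$-differences in the parabolic block. I would first compute $\partial_t g$, $\partial_t\mathbf T$ and $\partial_t{\rm Rm}$ explicitly, using the formulas from \cite{GMPS26}, and then choose the components of $X$ and $Y$ accordingly.
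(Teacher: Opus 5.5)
Your plan is essentially the paper's: a Kotschwar-style $L^2$ energy of the differences plus Gronwall for forward uniqueness, and Kotschwar's abstract PDE--ODE backward-uniqueness theorem for the backward direction. For the latter the paper takes $X=U\oplus V\oplus D\oplus S\oplus G$ and $Y=P\oplus Q\oplus h\oplus A\oplus B\oplus F$.

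Your component list leaves one point implicit: $\Delta\mathrm{Rm}-\tilde\Delta\widetilde{\mathrm{Rm}}$ produces a divergence term containing $\nabla A$. So for your pointwise inequality to hold, $Y$ must contain $B=\nabla A$, and then $X$ must contain the $\nabla\mathrm{Rm}$- and $\nabla^{2}\mathbf{T}$-differences. That is exactly the paper's backward system; for forward uniqueness the paper instead keeps the smaller system and removes that term by integrating by parts.
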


We give an outline of this paper. We review the basic theory in Section \ref{section2} about $G_{2}$-structure, $G_{2}$-decompositions of $2$-forms and $3$-forms, the torsion tensors of $G_{2}$-structures, and curvature tensor of closed $G_{2}$-structures. By pulling back the metric via a smooth family of diffeomorphisms, we prove the long-time existence with bounded Ricci curvature, torsion tensor and dilaton in Section \ref{section3}. In Section \ref{section4}, we prove the forward and backward uniqueness results of the heterotic $G_{2}$ flow $\eqref{heterotic flow}$.

\section{$G_{2}$-structures}\label{section2}

\subsection{$G_{2}$-structures on $7$-manifolds}
Let $\mathbb{O}$ be the octonions (exceptional division algebra). {From} the vector cross product ``$\times$" on ${\rm Im}\ \mathbb{O}$, we can define the 3-form by
$$\phi(a,b,c):=\frac{1}{2}\langle {[}a,b{]},c\rangle=\langle a\times b,c\rangle\quad\quad\quad\text{for}\ a,b,c\in {\rm Im}\ \mathbb{O}.$$
Let $\{e_{1},e_{2},\cdots,e_{7}\}$ denote the standard basis of $\mathbb{R}^{7}$ and $\{e^{1},e^{2},\cdots,e^{7}\}$ be its dual basis. Using the octonion multiplication table, one can show that
\begin{align}\label{phi}
    \phi=e^{123}+e^{145}+e^{167}+e^{246}-e^{257}-e^{347}-e^{356},
\end{align}
where $e^{ijk}:=e^{i}\wedge e^{j}\wedge e^{k}$. The subgroup {fixing $\phi$} of ${\rm GL}(7,\mathbb{R})$ is the exceptional Lie group $G_{2}$, which is a compact, connected, simple $14$-dimensional Lie subgroup of ${\rm SO}(7)$. In fact, {Lie group} $G_{2}$ acts irreducibly on $\mathbb{R}^{7}$ and preserves the metric and orientation for which $\{e_{1},e_{2},\cdots,e_{7}\}$ is an oriented orthonormal basis. Note that {Lie group} $G_{2}$ also preserves the $4$-form
$$\ast_{\phi}\phi=e^{4567}+e^{2367}+e^{2345}+e^{1357}-e^{1346}-e^{1256}-e^{1247},$$
where $\ast_{\phi}$ is the Hodge star operator determined by the metric and orientation.


For a smooth $7$-manifold $M$ and a point $x\in M$, we define 
\begin{align}
    \wedge_{+}^{3}(T_{x}^{\ast}M):=\left\{\varphi_{x}\in\wedge^{3}(T_{x}^{\ast}M)\ \Big{|}\ h^{\ast}\phi=\varphi_{x},\
    \text{for\ invertible}\ h\in\text{Hom}_{\mathbb{R}}(T_{x}M,\mathbb{R}^{7})\right\}\notag
\end{align}
and the bundle
\begin{align}
    \wedge_{+}^{3}(T^{\ast}M):=\bigcup_{x\in M}\wedge_{+}^{3}(T_{x}^{\ast}M)\notag.
\end{align}
We call a section $\varphi$ of $\wedge_{+}^{3}(T^{\ast}M)$ a {\it positive $3$-form} on $M$ or a {\it $G_{2}$-structure} on $M$, and denote the space of positive 3-form{s} by $\Omega^{3}_{+}(M)$. The existence of $G_{2}$-structure is equivalent to the
property that $M$ is orientable and spinnable, which is equivalent to the vanishing of the first two Stiefel-Whitney classes $w_{1}(TM)$ and $w_{2}(TM)$. 

For a $3$-form $\varphi$, we define a $\Omega^{7}(M)$-valued bilinear form $\text{B}_{\varphi}$ by
$$\text{B}_{\varphi}(u,v)=\frac{1}{6}(u\lrcorner\varphi)\wedge(v\lrcorner\varphi)\wedge\varphi,$$
where $u, v$ are tangent vectors on $M$ and $``\lrcorner"$ is the interior multiplication operator (\textit{Here we use the orientation in} \cite{Bryant 2006}, {which refers to $\eqref{phi}$}). Then we can see that any $\varphi\in\Omega^{3}_{+}(M)$ determines a Riemannian metric $g_{\varphi}$ and an orientation $d V_{\varphi}$, hence the Hodge star operator $\ast_{\varphi}$ and the associated $4$-form
$$\psi:=\ast_{\varphi}\varphi$$
can also be uniquely determined by $\varphi$. 

\subsection{$G_{2}$ decompositions and torsion}
The group $G_{2}$ acts irreducibly on $\mathbb{R}^{7}$ (and hence on $\wedge^{1}(\mathbb{R}^{7})^{\ast}$ and $\wedge^{6}(\mathbb{R}^{7})^{\ast}$), but it
acts reducibly on $\wedge^{k}(\mathbb{R}^{7})^{\ast}$ for $2\leq k\leq 5$. Hence a $G_{2}$ structure $\varphi$ induces splittings
of the bundles $\wedge^{k}(T^{\ast}M)(2\leq k\leq5)$ into direct summands, which we denote by
$\wedge^{k}_{l}(T^{\ast}M,\varphi)$ with $l$ being the rank of the bundle. We let the space of sections
of $\wedge^{k}_{l}(T^{\ast}M,\varphi)$ be $\Omega^{k}_{l}(M)$. Define the natural projections
$$\pi^{k}_{l}:\Omega^{k}(M)\longrightarrow \Omega^{k}_{l}(M),\ \ \alpha\longmapsto \pi^{k}_{l}(\alpha).$$
Then we have
\begin{align}
    \Omega^{2}(M)&=\Omega^{2}_{7}(M)\oplus\Omega^{2}_{14}(M),\notag\\
    \Omega^{3}(M)&=\Omega^{3}_{1}(M)\oplus\Omega^{3}_{7}(M)\oplus\Omega^{3}_{27}(M)\notag{,}
\end{align}
where each component is determined by
\begin{align}
    \Omega^{2}_{7}(M)&=\{X\lrcorner\varphi:X\in C^{\infty}(TM)\}=\{\beta\in\Omega^{2}(M):\ast_{\varphi}(\varphi\wedge\beta)=2\beta\},\notag\\
    \Omega^{2}_{14}(M)&=\{\beta\in\Omega^{2}(M):\psi\wedge\beta=0\}=\{\beta\in\Omega^{2}(M):\ast_{\varphi}(\varphi\wedge\beta)=-\beta\},\notag
\end{align}
and
\begin{align}
    \Omega^{3}_{1}(M)&=\{f\varphi:f\in C^{\infty}(M)\},\notag\\
    \Omega^{3}_{7}(M)&=\{\ast_{\varphi}(\varphi\wedge\alpha):\alpha\in\Omega^{1}(M)\}=\{X\lrcorner\psi:X\in C^{\infty}(TM)\},\notag\\
    \Omega^{3}_{27}(M)&=\{\eta\in\Omega^{3}(M):\eta\wedge\varphi=\eta\wedge\psi=0\}.\notag
\end{align}

\begin{remark}\label{remark2.1}
    $\Omega^{4}$ and $\Omega^{5}$ have the corresponding decompositions by Hodge duality. {For} more details about $G_{2}$-decompositions, see \cite{Bryant 2006, Spiros introduce to G2}.
\end{remark}

By the definition  of $G_{2}$ decompositions, we can find unique differential forms
$\tau_{0}\in\Omega^{0}(M),\tau_{1},\widetilde{\tau}_{1}\in\Omega^{1}(M),\tau_{2}\in\Omega^{2}_{14}(M)$ and $\tau_{3}\in\Omega^{3}_{27}(M)$ such that (see \cite{Bryant 2006})
\begin{align}\label{torsion1}
    d\varphi&=\tau_{0}\psi+3\!\ \tau_{1}\wedge\varphi+\ast_{\varphi}\tau_{3},\\
    d\psi&=4\!\ \widetilde{\tau}_{1}\wedge\psi+\tau_{2}\wedge\varphi.
\end{align}
In fact, Bryant \cite{Bryant 2006} proved that $\tau_{1}=\widetilde{\tau}_{1}$. We call $\tau_{0}$ the \textit{scalar torsion}, $\tau_{1}$ the \textit{vector torsion}, $\tau_{2}$ the \textit{Lie algebra torsion}, and $\tau_{3}$ the \textit{symmetric traceless torsion}. We also call $\tau_{\varphi}:=\{\tau_{0},\tau_{1},\tau_{2},\tau_{3}\}$ the intrinsic torsion forms of the $G_{2}$-structure $\varphi$.

{From \cite{Spiros introduce to G2}, the covariant derivative $\nabla\varphi$ is a smooth section of $T^{\ast}M\otimes \Lambda^{3}_{7}(T^{\ast}M)$. Thus we can define the full torsion tensor as 
\begin{definition}
    Let $X$ be a vector field on $M$. $\nabla_{X}\varphi$ can be written as
    $$\nabla_{X}\varphi=\mathbf{T}(X)\lrcorner \psi$$
    for some vector field $\mathbf{T}(X)$ on $M$. We call $\mathbf{T}=\mathbf{T}_{ij}dx^{i}\otimes dx^{j}$ the full torsion tensor of $\varphi$, which satisfies
    \begin{align}\label{2.7}
    \nabla_{i}\varphi_{jkl}&=\mathbf{T}_{i}^{\ m}\psi_{mjkl},\\
    \mathbf{T}_{i}^{\ j}&=\frac{1}{24}\nabla_{i}\varphi_{lmn}\psi^{jlmn},
\end{align}
and 
\begin{align}
    \nabla_{m}\psi_{ijkl}=-(\mathbf{T}_{mi}\varphi_{jkl}-\mathbf{T}_{mj}\varphi_{ikl}-\mathbf{T}_{mk}\varphi_{jil}-\mathbf{T}_{ml}\varphi_{jki}).
\end{align}
\end{definition}
The full torsion tensor $\mathbf{T}_{ij}$ is related to the
intrinsic torsion forms by the following:
\begin{align}
\label{Def of T}\mathbf{T}_{ij}=\frac{\tau_{0}}{4}g_{ij}-(\tau_{3})_{ij}-(\tau_{1}^{\#}\lrcorner\varphi)_{ij}-\frac{1}{2}(\tau_{2})_{ij}
\end{align}
or as $2$-tensors,
\begin{align}\label{Tdecomposition}
\mathbf{T}=\frac{\tau_{0}}{4}g_{\varphi}-\tau_{3}-\tau_{1}^{\#}\lrcorner\varphi-\frac{1}{2}\tau_{2},
\end{align}
where $(\tau_{1}^{\#}\lrcorner\varphi)_{ij}=(\tau_{1}^{\#})^{l}\varphi_{lij}$ and $\#$ is the isomorphism from $1$-form to vector fields.}

\subsection{Heterotic $G_{2}$ flow}

A $G_2$-structure is called integrable if $\tau_2=0$. Such a structure admits a unique metric connection with totally skew-symmetric torsion preserving $\varphi$. The characteristic connection and its torsion are \cite[Proposition 2.3]{GMPS26}
\begin{equation}\label{characteristic}
 \nabla^+=\nabla+\frac12g^{-1}H_\varphi,
 \qquad H_\varphi=\frac16\tau_0\phi-\tau_1^\sharp\lrcorner\psi-\tau_3.
\end{equation}
More explicitly,
\[
 g(\nabla^+_XY,Z)=g(\nabla_XY,Z)+\frac12H_\varphi(X,Y,Z).
\]

Let $\phi\in C^\infty(M,\mathbb{R})$. We say that $\varphi$ is conformally coclosed if
\begin{equation}\label{weightedclosed}
 d(e^{-4\phi}\psi)=0.
\end{equation}
From \eqref{torsion1},
\[
 d(e^{-4\phi}\psi)=e^{-4\phi}
 \big(4(\tau_1-d\phi)\wedge\psi+\tau_2\wedge\varphi\big).
\]
Consequently,
\begin{equation}\label{conformaltorsion}
 d(e^{-4\phi}\psi)=0
 \quad\Longleftrightarrow\quad \tau_1=d\phi,\quad\tau_2=0.
\end{equation}

We record some estimates used below. For a conformally coclosed pair, the orthogonal decomposition gives
\begin{equation}\label{torsionnorm}
 |\mathbf{T}|^2=\frac7{16}\tau_0^2+6|d\phi|^2+\frac12|\tau_3|^2.
\end{equation}
Indeed, the squared tensor norm of $(d\phi)^\sharp\lrcorner\varphi$ is $6|d\phi|^2$, and $|\tau_{27}|^2=\frac12|\tau_3|^2$; see \cite[Section 2.2]{GMPS26}. The orthogonal components of $H_\varphi$ similarly give
\[
 |H_\varphi|^2=\frac7{36}\tau_0^2+4|d\phi|^2+|\tau_3|^2.
\]
Therefore $|H_\varphi|+|d\phi|+|\tau_0|+|\tau_3|\leq C|\mathbf{T}|$.

Contracting \eqref{Tdecomposition} with $\varphi_k{}^{ij}$ and using $\varphi_{pij}\varphi_k{}^{ij}=6g_{pk}$, we obtain
\begin{equation}\label{dilatoncontraction}
 \nabla_k\phi=-\frac16\mathbf{T}_{ij}\varphi_k{}^{ij}.
\end{equation}
Differentiating this identity yields
\begin{equation}\label{dilatonhessian}
 \nabla_l\nabla_k\phi
 =-\frac16(\nabla_l\mathbf{T}_{ij})\varphi_k{}^{ij}
  -\frac16\mathbf{T}_{ij}\mathbf{T}_l{}^p\psi_{pk}{}^{ij}.
\end{equation}
In particular,
\begin{equation}\label{dilatonbounds}
 |d\phi|\leq C|\mathbf{T}|,\qquad
 |\nabla^2\phi|\leq C\big(|\nabla \mathbf{T}|+|\mathbf{T}|^2\big).
\end{equation}

For completeness, the Ricci tensor can also be expressed in terms of torsion as \cite[Lemma 2.2]{GMPS26}
\begin{equation}\label{RicciT}
 \begin{split}
  R_{ij}={}(\nabla_i\mathbf{T}_{mn}-\nabla_m\mathbf{T}_{in})\varphi_j{}^{mn}
 +({\rm tr} \mathbf{T})\mathbf{T}_{ij}-\mathbf{T}_{im}\mathbf{T}^m{}_j+\mathbf{T}_{im}\mathbf{T}_{np}\psi^{mnp}{}_j.
 \end{split}
\end{equation}
Thus $|{\rm Ric}|\leq C(|\nabla \mathbf{T}|+|\mathbf{T}|^2)$. 

We now return to the heterotic $G_2$ flow. The right-hand side of its first equation is exact, so
\[
 \partial_t d(e^{-4\phi}\psi)=0,
 \qquad [e^{-4\phi(t)}\psi(t)]=[e^{-4\phi_0}\psi_0].
\]
Hence \eqref{conformaltorsion} holds throughout a solution with conformally coclosed initial data. 
It is the generic flow in \cite{GMPS26} with their parameter $C=-4/3$. For this choice, short-time existence holds for every fixed $\gamma,\sigma\in\mathbb{R}$ by \cite[Theorem 4.8]{GMPS26}.

The induced metric evolves by \cite[Proposition 3.2]{GMPS26}
\begin{equation}\label{metricevolution}
 \partial_tg=e^{4\phi}\big(-2{\rm Ric}-8\nabla^2\varphi+E\big),
\end{equation}
where
\begin{equation}\label{Edefinition}
 E=\frac12H_\varphi^2-\frac{14}{3}\tau_0\mathbf{T}_{\rm sym}
 +\left[\left(\frac7{12}-2\sigma\right)\tau_0^2
        +2(3-\gamma)|d\phi|^2\right]g.
\end{equation}
Here $(H_\varphi^2)_{ij}=(H_\varphi)_{ipq}(H_\varphi)_j{}^{pq}$ and $\mathbf{T}_{\rm sym}=\frac12(\mathbf{T}+\mathbf{T}^{\mathsf t})$. Formula \eqref{torsionnorm} gives
\begin{equation}\label{Ebound}
 |E|\leq C(\gamma,\sigma)|\mathbf{T}|^2.
\end{equation}
We use the usual Lie derivative convention
\[
 \mathcal L_{\nabla\phi}g=2\nabla^2\phi;
\]
thus the Hessian term in \eqref{metricevolution} is also $-4e^{4\phi}\mathcal L_{\nabla\phi}g$.

\section{Long time existence of the heterotic $G_{2}$ flow}\label{section3}
In this section, we give the long-time existence of the heterotic $G_{2}$ flow assuming that Ricci curvature, torsion tensor and dilaton are uniformly bounded.

\begin{lemma}[Shi-type estimate,\cite{GMPS26}]\label{lemma3.3}
    Let $(\psi(t),\phi(t))$ be a solution to the heterotic $G_{2}$ flow $\eqref{heterotic flow}$ on a compact manifold $M$ with initial condition $(\varphi_{0},\phi_{0})$ satisfying $d(e^{-4\phi_{0}}\psi_{0})=0$. Let $B_{r}(p)$ be a ball of radius $r$ around $p\in M$ with respect to the metric $g_{0}$. Suppose that there exist $t_{0},\Lambda>0$ such that
    $$|{\rm Rm}|+|\nabla\mathbf{T}|+|\mathbf{T}|^{2}+|\phi|\leq \Lambda,\quad\quad\quad\quad \text{on}\ M\times[0,T)$$
    on $B_{r}(p)\times [0,t_{0}]$. Then, for each $k\geq 0$, there is a constant $C(k,t_{0},\Lambda,\gamma,\sigma,r)$ such that
    $$|\nabla^{k}{\rm Rm}|+|\nabla^{k+1}\mathbf{T}|+|\nabla^{k+2}\phi|<C(k,t_{0},\Lambda,\gamma,\sigma,r)$$
    on $B_{\frac{r}{2}}(p)\times [\frac{t_{0}}{2},t_{0}]$.
\end{lemma}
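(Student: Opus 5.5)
This is a local Shi-type derivative estimate, cited from \cite{GMPS26}. I would prove it by the standard Bernstein--Shi maximum principle argument with a spatial cutoff, adapted to the coupled system. The unknowns are $({\rm Rm},\mathbf{T},\phi)$, with $d\phi$ controlled by $\mathbf{T}$ through \eqref{dilatoncontraction}.

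\textbf{Step 1: evolution equations.} From \eqref{metricevolution} the metric evolves by $e^{4\phi}(-2{\rm Ric}-8\nabla^2\phi+E)$. Since $\phi$ is bounded, the factor $e^{4\phi}$ is bounded above and below, so the system is a Ricci flow modified by a Lie derivative term and lower order terms. To remove the $\mathcal L_{\nabla\phi}g$ term, which costs derivatives, I would either pull back by diffeomorphisms generated by $4e^{4\phi}\nabla\phi$, or work directly with the flow and treat the Hessian of $\phi$ as part of the unknown $\nabla\mathbf{T}$ via \eqref{dilatonhessian}.

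In this gauge I would derive reaction--diffusion equations
\[
\partial_t {\rm Rm}=e^{4\phi}\Delta{\rm Rm}+\mathcal{Q}_0,\qquad \partial_t\mathbf{T}=e^{4\phi}\Delta\mathbf{T}+\mathcal{Q}_1 .
\]
Here the $\mathcal{Q}_i$ are polynomial contractions in ${\rm Rm}$, $\nabla\mathbf{T}$, $\mathbf{T}$, $d\phi$, $\nabla^2\phi$, with coefficients smooth in $\phi$, together with terms like $\nabla e^{4\phi}\ast\nabla{\rm Rm}$. Commuting $\nabla^k$ with $\partial_t-e^{4\phi}\Delta$ gives
\[
(\partial_t-e^{4\phi}\Delta)|\nabla^k{\rm Rm}|^2\le -c|\nabla^{k+1}{\rm Rm}|^2+C\sum_{i+j=k}|\nabla^i A||\nabla^j A||\nabla^k{\rm Rm}|,
\]
where $A=({\rm Rm},\nabla\mathbf{T},\mathbf{T}^2)$, and analogous inequalities hold for $\nabla^{k+1}\mathbf{T}$. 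The dilaton equation is a quasilinear heat equation, $\partial_t\phi=e^{4\phi}\Delta\phi+O(|\mathbf{T}|^2)$, so $\nabla^{k+2}\phi$ is controlled via \eqref{dilatonhessian} and its derivatives by $\nabla^{k+1}\mathbf{T}$ plus lower order terms.

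\textbf{Step 2: induction with a Shi-type quantity.} For $k=0$, consider
\[
F=t\big(|\nabla {\rm Rm}|^2+|\nabla^2\mathbf{T}|^2\big)\big(a+|{\rm Rm}|^2+|\nabla\mathbf{T}|^2+|\mathbf{T}|^4\big).
\]
Choosing $a$ large in terms of $\Lambda$ lets the good gradient terms absorb the bad ones, giving
\[
(\partial_t-e^{4\phi}\Delta)F\le -cF^2/t+C/t .
\]
I would multiply by a cutoff $\eta$ supported in $B_r(p)$. The cutoff must have $|\nabla\eta|^2/\eta$ and $\Delta\eta$ bounded with respect to $g(t)$. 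This holds because the metrics $g(t)$ are uniformly equivalent to $g_0$ on $[0,t_0]$: $|\partial_t g|\le C\Lambda$ when $\nabla^2\phi$ and ${\rm Ric}$ are bounded by the hypotheses and \eqref{dilatonbounds}. The maximum principle then bounds $\eta F$, and higher $k$ follow inductively in the usual way.

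\textbf{Main obstacle.} The hard part is the bound on $\Delta_{g(t)}\eta$, which needs a lower Ricci bound together with control of Christoffel symbol drift. I would handle this as in Shi: bound $\partial_t\Gamma$ by $\nabla{\rm Rm}$ and $\nabla^2\mathbf{T}$, and fold it into the same maximum principle argument. The other delicate point is the non-cancelling mixed terms coming from $\nabla^2\phi$ in the metric evolution. These must be counted as top order, i.e.\ of the order of $\nabla\mathbf{T}$, and estimated using \eqref{dilatonhessian}.
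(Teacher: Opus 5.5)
The paper gives no proof of this lemma; it is quoted from \cite{GMPS26}. Your route is the standard Bernstein--Shi cutoff argument (as in Lotay--Wei), which is what such a citation rests on, and it is workable. Two steps, however, are wrong as written.

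\textbf{The evolution inequality.} Your displayed inequality for $|\nabla^k{\rm Rm}|^2$ is false. The evolution of ${\rm Rm}$ contains $\mathbf{T}\ast\nabla^2\mathbf{T}$ and $\mathbf{T}\ast\nabla{\rm Rm}$ (see the equations recorded in Section~\ref{section4}). After $k$ derivatives these become $\mathbf{T}\ast\nabla^{k+2}\mathbf{T}$ and $\mathbf{T}\ast\nabla^{k+1}{\rm Rm}$. The $\nabla^{k+1}\mathbf{T}$ equation picks up the same terms through $\nabla^k\partial_t\Gamma$, since $\partial_t\Gamma$ contains $\nabla{\rm Ric}$ and $\nabla^3\phi$.

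These terms are of the order of the good terms, not of $\sum_{i+j=k}|\nabla^iA||\nabla^jA|$. They can be absorbed by Cauchy--Schwarz (using $|\mathbf{T}|\le C$) only into $-c(|\nabla^{k+1}{\rm Rm}|^2+|\nabla^{k+2}\mathbf{T}|^2)$, i.e.\ only for the coupled quantity $|\nabla^k{\rm Rm}|^2+|\nabla^{k+1}\mathbf{T}|^2$. Your Step 2 already uses the coupled quantity, so the fix is to state the inequality for the pair.

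Working without a gauge is legitimate, but only because of naturality. Write $-8e^{4\phi}\nabla^2\phi=-\mathcal{L}_Wg+32e^{4\phi}d\phi\otimes d\phi$ with $W=4e^{4\phi}\nabla\phi$. Then the would-be $\nabla^4\phi\sim\nabla^3\mathbf{T}$ contributions to $\partial_t{\rm Rm}$ combine into the first-order term $-\mathcal{L}_W{\rm Rm}$. If you substituted \eqref{dilatonhessian} into $\partial_tg$ before differentiating, you would lose this. The pull-back option is awkward for a local estimate, since flow lines of $W$ may leave $B_r(p)$, where nothing is assumed.

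\textbf{The cutoff.} Uniform equivalence of $g(t)$ and $g_0$ controls $|\nabla\eta|^2_{g(t)}/\eta$, but not
\[
\Delta_{g(t)}\eta=g^{ij}(t)\big(\nabla^{g_0}_i\nabla^{g_0}_j\eta-(\Gamma(t)-\Gamma(0))^k_{ij}\partial_k\eta\big).
\]
So your sentence asserting this is wrong. Your ``main obstacle'' paragraph names the right fix; here is why it closes.

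Take $\eta=\psi(d_{g_0}(p,\cdot))$. Because $g^{-1}(t)\neq g_0^{-1}$, you need $\nabla^2_{g_0}\eta\ge -Cg_0$ as a form. This comes from Hessian comparison, in the barrier sense, using $|{\rm Rm}(g_0)|\le\Lambda$ on $B_r(p)$; a Ricci lower bound is not enough. Differentiating \eqref{dilatonhessian} gives $|\partial_t\Gamma|\le C(1+\sqrt H)$ with $H=|\nabla{\rm Rm}|^2+|\nabla^2\mathbf{T}|^2$. Together with $|\nabla\eta|\le C\sqrt\eta$, the drift therefore contributes at most $CF\sqrt\eta\,|\Gamma(t)-\Gamma(0)|$. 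Since $F\ge asH$, setting $m(t)=\sup_{M\times[0,t]}\eta F$ gives
\[
\sqrt{\eta(x)}\,|\Gamma(x,t)-\Gamma(x,0)|\le C\int_0^t\big(1+\sqrt{\eta H(x,s)}\big)\,ds\le C\big(t+\sqrt{m(t)\,t/a}\big).
\]
At the first time $\eta F$ reaches a level $K$, the maximum principle then gives $cK^2\le C(1+K+K^{3/2})$, which is impossible for $K$ large.

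Alternatively, you can cut off with $d_{g(t)}$, using Laplacian comparison and $|\partial_t d_{g(t)}|\le C(\Lambda)d_{g(t)}$. In that case $g(t)$ and $g_0$ may differ by a factor $e^{C(\Lambda)t_0}>4$. You must then work on balls of radius about $re^{-C(\Lambda)t_0}$ centred in $B_{r/2}(p)$ and cover $B_{r/2}(p)$ by them.
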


\begin{lemma}\label{lemma3.1}
Let $(\psi(t),\phi(t))_{t\in[0,T)}$ be a solution to the heterotic $G_{2}$ flow $\eqref{heterotic flow}$ on a compact manifold $M$ with initial condition $(\varphi_{0},\phi_{0})$ satisfying $d(e^{-4\phi_{0}}\psi_{0})=0$, where $T<+\infty$. If there exists a constant $\Lambda>0$ such that
$$|{\rm Ric}|+|\mathbf{T}|^{2}+|\phi|\leq \Lambda,\quad\quad\quad\quad \text{on}\ M\times[0,T),$$
then there exists a smooth family of diffeomorphisms
$\chi_{t}:M\rightarrow M$ defined on $[0,T)$ with $\chi_{0}={\rm id}$ such that
$$\hat{g}(t):=\chi^{\ast}_{t}g(t)$$
satisfies $|\partial_{t}\hat{g}(t)|_{\hat{g}(t)}\leq L$ for a constant $L$ independent of $t< T$.
\end{lemma}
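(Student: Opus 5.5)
The plan is to remove the Hessian term in \eqref{metricevolution} by a time-dependent diffeomorphism generated by the gradient of the dilaton. The surviving part of $\partial_t g$ is then controlled purely by $|{\rm Ric}|$, $|\mathbf{T}|^2$ and $|\phi|$.

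By \eqref{metricevolution} and the convention $\mathcal L_{\nabla\phi}g=2\nabla^2\phi$, we have
$$\partial_t g=e^{4\phi}\big(-2{\rm Ric}+E\big)-8e^{4\phi}\nabla^2\phi .$$
The Hessian term is exactly a Lie derivative up to a correction involving $d\phi$. Indeed,
$$\mathcal L_{e^{4\phi}\nabla\phi}g=e^{4\phi}\mathcal L_{\nabla\phi}g+4e^{4\phi}\,d\phi\otimes d\phi+4e^{4\phi}\,d\phi\otimes d\phi,$$
so that
$$8e^{4\phi}\nabla^2\phi=4\mathcal L_{e^{4\phi}\nabla\phi}g-32e^{4\phi}d\phi\otimes d\phi .$$
Set $X(t):=4e^{4\phi(t)}\nabla_{g(t)}\phi(t)$, a smooth time-dependent vector field on the compact manifold $M$. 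Let $\chi_t$ solve $\partial_t\chi_t(x)=-X(t)(\chi_t(x))$ with $\chi_0={\rm id}$. Compactness of $M$ ensures this ODE has a global solution on $[0,T)$ consisting of diffeomorphisms. The sign should be chosen so that the $\mathcal L_X g$ term cancels; I would fix it by the computation in the next paragraph.

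Next I compute the evolution of $\hat g(t)=\chi_t^*g(t)$. The standard formula gives
$$\partial_t\hat g=\chi_t^*\big(\partial_t g\big)+\chi_t^*\big(\mathcal L_{Y}g\big),$$
where $Y=-X$ is the generator. Hence
$$\partial_t\hat g=\chi_t^*\Big(e^{4\phi}\big(-2{\rm Ric}+E+32\,d\phi\otimes d\phi\big)\Big),$$
up to adjusting the sign of $Y$ so that the $\mathcal L_X g$ terms cancel. Pulling back is an isometry from $(M,g(t))$ to $(M,\hat g(t))$, so
$$|\partial_t\hat g|_{\hat g}\le e^{4|\phi|}\big(2|{\rm Ric}|+|E|+32|d\phi|^2\big).$$
By \eqref{Ebound} and \eqref{dilatonbounds} this is at most
$$e^{4\Lambda}\big(2\Lambda+C(\gamma,\sigma)\Lambda+C\Lambda\big)=:L,$$
which is independent of $t<T$.

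The main point requiring care is the bookkeeping: the exact coefficient and sign relating $-8e^{4\phi}\nabla^2\phi$ to $\mathcal L_Xg$, and the direction of the flow of $\chi_t$. The key observation is that the factor $e^{4\phi}$ must be absorbed inside the vector field. This absorption produces only the harmless first-order term $d\phi\otimes d\phi$, which is bounded by $|\mathbf{T}|^2$. Without it, no bound on $\nabla\mathbf{T}$ (equivalently on $\nabla^2\phi$) would be needed in any case, since the entire second-order dilaton term is gauged away. No estimate beyond the pointwise bounds \eqref{Ebound} and \eqref{dilatonbounds} is required, and smoothness of $\chi_t$ follows from smoothness of the solution on $M\times[0,T)$.
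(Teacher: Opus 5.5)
Your argument is the paper's: you use the same vector field $X=4e^{4\phi}\nabla\phi=\nabla(e^{4\phi})$ and the same identity $\mathcal L_Xg=8e^{4\phi}\nabla^2\phi+32e^{4\phi}d\phi\otimes d\phi$, and you reach the same formula $\partial_t\hat g=\chi_t^*[e^{4\phi}(-2{\rm Ric}+E+32\,d\phi\otimes d\phi)]$, bounded via \eqref{Ebound} and \eqref{dilatonbounds}. The sign you left open must be fixed the other way from your tentative choice. Generator $Y=-X$ gives $\chi_t^*(\partial_tg-\mathcal L_Xg)$, which doubles the Hessian term instead of cancelling it. Since $-8e^{4\phi}\nabla^2\phi=-\mathcal L_Xg+32e^{4\phi}d\phi\otimes d\phi$, you need $\partial_t\chi_t=X(t)\circ\chi_t$, exactly as in the paper.
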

\begin{proof}
    Let $W_{t}$ be a vector field satisfies
    $$W_{t}=\nabla (e^{4\phi})=4e^{4\phi}\nabla\phi,$$
    we observe that $W_{t}$ is smooth on $M\times [0,T)$. By solving an ODE
   \begin{equation}
   \left \{
       \begin{array}{rl}
           \partial_{t}\chi_{t}&=W_{t}\circ\chi_{t},\\
           \chi_{0}&={\rm id},
       \end{array}
  \right.
\end{equation}
we get a smooth family of diffeomorphisms $\{\chi_{t}\}_{t}$. Define
$$\hat{g}(t):=\chi^{\ast}_{t}g(t),$$
it follows that
\begin{align}
    \partial_{t}\hat{g}&=\partial_{t}(\chi^{\ast}_{t}g)=\chi^{\ast}_{t}(\partial_{t}g+\mathcal{L}_{W_{t}}g)\notag\\
    &=\chi_{t}^{\ast}[e^{4\phi}(-2{\rm Ric}-8\nabla^{2}\phi+E)+8e^{4\phi}\nabla^{2}\phi+32e^{4\phi}d\phi\otimes d\phi]\notag\\
    &=\chi_{t}^{\ast}[e^{4\phi}(-2{\rm Ric}+E+32d\phi\otimes d\phi)]\notag,
\end{align}
which means
$$|\partial_{t}\hat{g}|\leq L,$$
the inequality uses the assumption and $L$ is a constant independent of $t< T$.
\end{proof}

\begin{lemma}\label{lemma3.2}
Let $\{g(t)\}_{t\in[0,T)}$ be a family of smooth metrics on a closed $n$ dimensional manifold with $T<+\infty$. Suppose that $|\partial_{t}g(t)|_{g(t)}\leq L$ with a constant $L$ independent of $t< T$. If there exists a sequence $\{t_{i},y_{i},r_{i}\}_{i}$ with 
$$t_{i}\rightarrow T,\quad y_{i}\in M,\quad r_{i}\rightarrow 0\quad \text{as}\quad i\rightarrow \infty,$$
then we have
$$\lim_{i\rightarrow\infty}\frac{{\rm Vol}_{g(t_{i})}(B_{g(t_{i})}(y_{i},r_{i}))}{\omega_{n}r_{i}^{n}}=1,$$
where $B_{g(t_{i})}(y_{i},r_{i})$ is a geodesic ball and $\omega_{n}$ is the volume of a geodesic ball of radius $1$ in Euclidean space.
\end{lemma}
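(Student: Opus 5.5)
The key consequence of the hypothesis $|\partial_t g|_{g}\leq L$ is that all the metrics $g(t)$, $t\in[0,T)$, are uniformly equivalent, and that they converge to a limit metric. I will derive both facts and then compare small balls with Euclidean balls.

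\emph{Step 1: uniform equivalence.} For a tangent vector $v$, the function $t\mapsto g(t)(v,v)$ satisfies
\[
\bigl|\partial_t\, g(t)(v,v)\bigr|\leq L\, g(t)(v,v).
\]
Integrating gives
\[
e^{-L|t-s|}g(s)\leq g(t)\leq e^{L|t-s|}g(s)\qquad\text{for } s,t\in[0,T).
\]
Since $T<\infty$, all the $g(t)$ are uniformly equivalent to $g(0)$.

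\emph{Step 2: a continuous limit.} Set $h=\partial_t g$. Measured with $g(0)$, we have $|h|_{g(0)}\leq C L$ by Step 1. Hence $g(t)$ is Cauchy in $C^0$ as $t\to T$ and converges uniformly to a continuous symmetric tensor $g(T)$. By Step 1 this limit is positive definite, and
\[
e^{-L(T-t)}g(T)\leq g(t)\leq e^{L(T-t)}g(T).
\]
The limit $g(T)$ is only continuous, so I will avoid using its curvature. The point is that the ratio $\lambda_i:=e^{L(T-t_i)}$ tends to $1$.

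\emph{Step 3: comparison with a frozen metric.} By compactness, pass to a subsequence with $y_i\to y$. It suffices to prove the limit along every such subsequence.
\begin{itemize}
\item Choose coordinates $x$ near $y$ with $g(T)_{ab}(y)=\delta_{ab}$.
\item By continuity of $g(T)$ and $y_i\to y$, for any $\varepsilon>0$ there is a neighborhood $U$ of $y$ on which $(1+\varepsilon)^{-1}\delta\leq g(T)\leq(1+\varepsilon)\delta$.
\item Combining with Step 2, for $i$ large we get $(1+\varepsilon)^{-1}\lambda_i^{-1}\delta\leq g(t_i)\leq(1+\varepsilon)\lambda_i\delta$ on $U$.
\item Since $r_i\to0$, the ball $B_{g(t_i)}(y_i,r_i)$ lies in $U$ for large $i$; this uses the uniform equivalence to control distances.
\end{itemize}
With $\mu_i=(1+\varepsilon)\lambda_i$, lengths of curves inside $U$ are then comparable to Euclidean lengths up to factors $\mu_i^{\pm1/2}$. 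Hence
\[
B_{\delta}(y_i,\mu_i^{-1/2}r_i)\subset B_{g(t_i)}(y_i,r_i)\subset B_{\delta}(y_i,\mu_i^{1/2}r_i).
\]
The volume form of $g(t_i)$ is between $\mu_i^{-n/2}$ and $\mu_i^{n/2}$ times Lebesgue measure. Therefore the volume ratio lies between $\mu_i^{-n}$ and $\mu_i^{n}$.

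\emph{Step 4: conclusion.} Letting $i\to\infty$ and then $\varepsilon\to0$ gives the limit $1$.

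The main obstacle is the ball containment in Step 3. A curve realizing a $g(t_i)$-distance less than $r_i$ must be shown to stay inside $U$. This follows because any such curve has $g(T)$-length at most $\lambda_i^{1/2}r_i\to0$, so it stays in a shrinking $g(T)$-neighborhood of $y_i$, which lies inside $U$ for $i$ large. Nothing beyond the uniform $C^0$ control is needed, so no curvature or injectivity radius bound enters.
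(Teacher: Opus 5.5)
Your argument is correct, but it is organized differently from the paper's. The paper never forms a limit metric. It fixes a smooth slice $g(\tau)$ with $\tau<T$ and uses $e^{-\delta}g(\tau)\leq g(t_i)\leq e^{\delta}g(\tau)$, where $\delta=L(T-\tau)$. This sandwiches $B_{g(t_i)}(y_i,r_i)$ between $g(\tau)$-balls of radii $e^{\mp\delta/2}r_i$, and the volume forms within factors $e^{\pm n\delta/2}$. It then invokes the standard fact that a smooth metric on a compact manifold has small-ball volume ratios tending to $1$ uniformly in the center. This gives $e^{-n\delta}\leq\liminf\leq\limsup\leq e^{n\delta}$, and finally the paper lets $\tau\to T$.

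You instead build the $C^0$ limit $g(T)$, so that a single frozen metric has comparison constant $\lambda_i\to 1$. You then prove the needed small-ball asymptotics by hand: extract a subsequence $y_i\to y$, compare with the Euclidean metric in coordinates where $g(T)(y)$ is the identity, and use only continuity.

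The paper's route is shorter, but it leans on the quoted uniform small-ball fact, which for a smooth metric follows from bounded curvature and a positive injectivity radius. Yours is self-contained, uses only $C^0$ information, and in effect proves that uniformity in the center via compactness.

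Your Step 2 is dispensable. The coordinate argument of Step 3 works verbatim with $g(\tau)$ in place of $g(T)$ and $e^{L(T-\tau)}$ in place of $\lambda_i$, followed by $\tau\to T$. That would spare you the discussion of the regularity of the limit.

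When you write Step 3 out, take $U$ to be a coordinate Euclidean ball about $y$. Then, for large $i$, the Euclidean balls about $y_i$ of radius $\mu_i^{-1/2}r_i$, and the straight segments from $y_i$ used in the inner containment, stay inside $U$.
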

\begin{proof}
    From the assumption and fix the time $\tau\in[0,T)$, for $i$ sufficiently large such that $t_{i}\geq \tau$, we have
    $$\left|\int_{\tau}^{t_{i}}\partial_{t}g(t)\ dt\right|\leq \int_{\tau}^{t_{i}}|\partial_{t}g(t)|\ dt\leq L(t_{i}-\tau).$$
    Let $\delta:=L(T-\tau)$. Thus, from Lemma 6.49 in \cite{Chow-Knopf 2004}, we arrive at
    $$e^{-\delta}g(\tau)\leq e^{-L(t_{i}-\tau)}g(\tau)\leq g(t_{i})\leq e^{L(t_{i}-\tau)}g(\tau)\leq e^{\delta}g(\tau),$$
    which implies
    $$B_{g(\tau)}(y_{i},e^{-\frac{\delta}{2}}r_{i})\subset B_{g(t_{i})}(y_{i},r_{i})\subset B_{g(\tau)}(y_{i},e^{\frac{\delta}{2}}r_{i}).$$
    Hence, the volume of the geodesic ball satisfies
    \begin{align}
        e^{-\frac{n\delta}{2}}{\rm Vol}_{g(\tau)}(B_{g(\tau)}(y_{i},e^{-\frac{\delta}{2}}r_{i}))&\leq {\rm Vol}_{g(t_{i})}(B_{g(t_{i})}(y_{i},r_{i}))\\
        &\leq e^{\frac{n\delta}{2}}{\rm Vol}_{g(\tau)}(B_{g(\tau)}(y_{i},e^{\frac{\delta}{2}}r_{i}))\notag.
    \end{align}
    The smooth fixed metric $g(\tau)$ on the compact manifold has uniformly Euclidean small-ball volume ratios, uniformly in the center. It follows that
    \begin{align}
        e^{-n\delta}&\leq \liminf_{i\rightarrow\infty}\frac{{\rm Vol}_{g(t_{i})}(B_{g(t_{i})}(y_{i},r_{i}))}{\omega_{n} r_{i}^{n}}\notag\\
        &\leq \limsup_{i\rightarrow\infty}\frac{{\rm Vol}_{g(t_{i})}(B_{g(t_{i})}(y_{i},r_{i}))}{\omega_{n} r_{i}^{n}}\notag\\
        &\leq e^{n\delta}\notag.
    \end{align}
   Let $\tau\rightarrow T$, then we prove this lemma.
\end{proof}

\begin{lemma}\label{lemma3.4}
Let $(N, g)$ be complete and let $A$ be a smooth tensor field satisfying
\[
|A|_{g} \le c_{0}, \qquad |\nabla_{g}^2 A|_{g} \le c_{2}.
\]
Then
$$
|\nabla_{g} A|^2_{g} \le 2nc_{0} c_{2}.
$$
where $c_{0},c_{2}$ are nonnegative constants and $n$ is the dimension of manifold. 
\end{lemma}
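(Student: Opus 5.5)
The plan is to prove this interpolation inequality by a pointwise maximum-type argument along geodesics, so that completeness of $N$ is used only to run geodesics for arbitrarily long times. Fix a point $x\in N$ with $\nabla A(x)\neq 0$; otherwise there is nothing to prove. Write $\nabla A(x)=\sum_{i=1}^n e^i\otimes B_i$, where $\{e_i\}$ is an orthonormal basis of $T_xN$ and $B_i=\nabla_{e_i}A(x)$. Then $|\nabla A|^2(x)=\sum_i|B_i|^2$. It therefore suffices to show, for each unit vector $v\in T_xN$,
\[
|\nabla_v A(x)|^2\le 2c_0c_2 ,
\]
and then sum over the $n$ basis directions to obtain the factor $n$. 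This is where the constant $2n c_0c_2$ comes from.

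Fix a unit vector $v$ and let $\gamma(s)=\exp_x(sv)$, which is defined for all $s\in\mathbb{R}$ by completeness. Let $P_s$ denote parallel transport along $\gamma$ from $\gamma(s)$ back to $x$, and set $a(s)=P_s\big(A(\gamma(s))\big)$. This is a curve in the fixed finite-dimensional inner product space of tensors at $x$. Since $\gamma$ is a geodesic and parallel transport is an isometry, we have $a'(s)=P_s(\nabla_{\dot\gamma}A)$ and $a''(s)=P_s(\nabla^2_{\dot\gamma,\dot\gamma}A)$. The assumptions then give $|a(s)|\le c_0$ and $|a''(s)|\le |\nabla^2A|\le c_2$ for all $s$.

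Next, pair with the fixed unit vector $w=a'(0)/|a'(0)|$ and set $f(s)=\langle a(s),w\rangle$. This scalar function satisfies $|f|\le c_0$, $|f''|\le c_2$, and $f'(0)=|\nabla_vA(x)|$. Taylor's formula with remainder gives, for $h>0$,
\[
f(h)-f(-h)=2hf'(0)+R,\qquad |R|\le c_2h^2 .
\]
Hence $2h f'(0)\le 2c_0+c_2h^2$, that is, $f'(0)\le c_0/h+c_2h/2$. Optimizing in $h$ with $h=\sqrt{2c_0/c_2}$ yields $f'(0)\le\sqrt{2c_0c_2}$. The degenerate case $c_2=0$ follows by letting $h\to\infty$, which gives $f'(0)=0$. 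This step requires $\gamma$ to be defined on $[-h,h]$ for large $h$, and that is exactly where completeness enters. Squaring gives $|\nabla_vA(x)|^2\le 2c_0c_2$, and summing over $v=e_1,\dots,e_n$ gives the claim.

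The only delicate point is the identification of $a''$ with the transported second covariant derivative. Since $\dot\gamma$ is parallel, $\nabla^2_{\dot\gamma,\dot\gamma}A=\nabla_{\dot\gamma}(\nabla_{\dot\gamma}A)$ with no $\nabla_{\nabla_{\dot\gamma}\dot\gamma}A$ correction. We also have $|\nabla^2_{\dot\gamma,\dot\gamma}A|\le|\nabla^2A|$ because $\dot\gamma$ is a unit vector. Everything else is one-variable calculus. I expect no genuine obstacle here; the main care is bookkeeping the constant $2n$, which is not sharp but matches the statement.
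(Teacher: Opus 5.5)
Your proposal is correct and is essentially the paper's proof. Both run a complete geodesic through the point, reduce to a scalar function by pairing $A$ with a parallel tensor, apply the symmetric Taylor bound $2h|f'(0)|\le 2c_0+c_2h^2$ with $h=\sqrt{2c_0/c_2}$, and sum over an orthonormal basis to get the factor $n$. The only cosmetic difference is that you transport $A$ back to $x$ and pair with $w=a'(0)/|a'(0)|$ (for a direction with $a'(0)=0$ the bound is trivial, so this choice is harmless), whereas the paper transports an arbitrary unit tensor $B_0$ forward along $\gamma$ and takes the supremum over $B_0$.
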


\begin{proof}
Fix $p\in N$ and a unit vector $v\in T_pN$. By completeness, there is a unit-speed geodesic $\gamma:\mathbb{R}\to N$ with $\gamma(0)=p$ and $\dot\gamma(0)=v$. Choose any unit tensor $B_0$ of the same type as $A(p)$, and let $B(s)$ be its parallel transport along $\gamma$. Set
\[
 f(s)=\langle A(\gamma(s)),B(s)\rangle.
\]
Since $\gamma$ is a geodesic and $B$ is parallel, we have
\[
 f'(s)=\langle \nabla_{\dot\gamma}A,B(s)\rangle,\qquad
 f''(s)=\langle\nabla_{\dot\gamma}\nabla_{\dot\gamma}A,B(s)\rangle.
\]
It follows that $|f(s)|\leq c_{0}$ and $|f''(s)|\leq c_{2}$ for every $s\in\mathbb{R}$.

For any $r>0$, Taylor's formula with integral remainder gives
\begin{align*}
 f(r)&=f(0)+rf'(0)+\int_0^r(r-s)f''(s)\,ds,\\
 f(-r)&=f(0)-rf'(0)+\int_0^r(r-s)f''(-s)\,ds.
\end{align*}
Subtracting the two equalities and estimating the remainders, we obtain
\[
 2r|f'(0)|\leq|f(r)-f(-r)|+\int_0^r(r-s)\big(|f''(s)|+|f''(-s)|\big)\,ds
 \leq2 c_{0}+c_{2}r^2.
\]
Thus
\[
 |f'(0)|\leq\frac{c_{0}}{r}+\frac{c_{2}r}{2}.
\]
If $c_{0}=0$, then $A=0$ and the result is immediate. If $c_{2}=0$, letting $r\to\infty$ gives $f'(0)=0$. If both constants are positive, choose $r=\sqrt{2c_{0}/c_{2}}$. We obtain
\[
 \langle\nabla_vA(p),B_0\rangle^2=|f'(0)|^2
 \leq2c_{0} c_{2}.
\]
Taking the supremum over unit tensors $B_0$ gives
\[
 |\nabla_vA(p)|^2\leq2c_{0} c_{2}.
\]
Now let $\{e_1,\ldots,e_n\}$ be an orthonormal basis of $T_pN$. The full tensor norm is
\[
 |\nabla A(p)|^2=\sum_{j=1}^n|\nabla_{e_j}A(p)|^2
 \leq2nc_{0} c_{2}.
\]
Taking the supremum over $p\in N$ proves the estimate.
\end{proof}

\begin{theorem}\label{thm2.1}
Let $(\psi(t),\phi(t))_{t\in[0,T)}$ be a solution to the heterotic $G_{2}$ flow $\eqref{heterotic flow}$ on a compact manifold $M$ with initial condition $(\varphi_{0},\phi_{0})$ satisfying $d(e^{-4\phi_{0}}\psi_{0})=0$, where $T<+\infty$. If there exists a constant $\Lambda>0$ such that
$$|{\rm Ric}|+|\mathbf{T}|^{2}+|\phi|\leq \Lambda,\quad\quad\quad\quad \text{on}\ M\times[0,T),$$
then the flow $\eqref{heterotic flow}$ can be extended past time $T$.
\end{theorem}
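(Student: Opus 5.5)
We argue by contradiction and reduce to Theorem \ref{Thm1.2}. By Theorem \ref{Thm1.2} it suffices to show that, under the weaker hypothesis $|{\rm Ric}|+|\mathbf{T}|^{2}+|\phi|\leq\Lambda$, the quantity $|{\rm Rm}|+|\nabla\mathbf{T}|$ stays bounded on $M\times[0,T)$. Suppose not. Then there are points $(x_i,t_i)$ with $t_i\to T$ and
\[
Q_i:=|{\rm Rm}|(x_i,t_i)+|\nabla\mathbf{T}|(x_i,t_i)\to\infty .
\]
We choose them by the standard point-picking argument, so that $Q_i$ is (up to a factor $2$) maximal on $M\times[0,t_i]$.

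\emph{Blow-up step.} Work with the pulled-back solution $\hat g(t)=\chi_t^{\ast}g(t)$ of Lemma \ref{lemma3.1}, together with $\hat\varphi=\chi_t^\ast\varphi$ and $\hat\phi=\chi_t^\ast\phi$. The norms of ${\rm Rm}$, $\mathbf{T}$, $\nabla\mathbf{T}$, $\phi$ are diffeomorphism invariant, and $|\partial_t\hat g|\leq L$. Set
\[
g_i(s)=Q_i\,\hat g(t_i+s/Q_i),\qquad \varphi_i=Q_i^{3/2}\hat\varphi,\qquad \phi_i=\phi .
\]
Under this scaling, $|{\rm Ric}_{g_i}|\le \Lambda/Q_i\to0$, $|\mathbf{T}_i|^2\le\Lambda/Q_i\to0$, and $|\phi_i|\le\Lambda$. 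Moreover $|{\rm Rm}_i|+|\nabla\mathbf{T}_i|\le 2$ on backward parabolic regions of size going to infinity, with value $1$ at $(x_i,0)$.

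The rescaled pair solves a rescaled heterotic flow, in which the time factor $e^{4\phi}$ is harmless since $\phi$ is bounded. So the Shi-type estimate, Lemma \ref{lemma3.3}, applied on $[-1,0]$, gives uniform bounds on all derivatives of ${\rm Rm}_i$, $\mathbf{T}_i$, $\phi_i$. One point needs care: Lemma \ref{lemma3.3} is stated with $|\mathbf{T}|^2$ and $|\nabla\mathbf{T}|$ bounds, which we now have. The second caveat is that it is stated on $g_0$-balls, and this is handled by uniform equivalence of metrics on short rescaled time intervals.

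\emph{Injectivity radius and the limit.} To extract a limit we need a lower bound on the injectivity radius at $(x_i,0)$. By Lemma \ref{lemma3.2}, applied to $\hat g$ with $r_i=r/\sqrt{Q_i}$, the volume ratios of $\hat g(t_i)$-balls of radius $r/\sqrt{Q_i}$ tend to the Euclidean value. After rescaling, unit-order balls in $g_i(0)$ are asymptotically volume-Euclidean. Combined with the bounded curvature, Cheeger's lemma gives ${\rm inj}_{g_i(0)}(x_i)\ge c>0$. Hamilton's compactness theorem for flows (with the $G_2$-structures and dilaton carried along) then yields a complete pointed limit $(M_\infty,g_\infty(s),\varphi_\infty,\phi_\infty)$ for $s\in(-1,0]$, with the following properties:
\begin{itemize}
\item $\mathbf{T}_\infty\equiv0$ and ${\rm Ric}_\infty\equiv0$;
\item $|{\rm Rm}_\infty|+|\nabla\mathbf{T}_\infty|=1$ at the base point;
\item volume ratios equal to $\omega_n r^n$ for every $r$, since Lemma \ref{lemma3.2} holds for any fixed $r$.
\end{itemize}
Since $\mathbf{T}_\infty=0$, also $\nabla\mathbf{T}_\infty=0$, so $|{\rm Rm}_\infty|=1$ at the base point. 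But $g_\infty$ is Ricci-flat with Euclidean volume growth ratio exactly $1$ at all scales. By the equality case of Bishop--Gromov, $g_\infty$ is flat, which is a contradiction.

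\emph{Main obstacle.} The delicate point is the normalization: we must make sure $\nabla\mathbf{T}$ does not survive in the limit while ${\rm Rm}$ vanishes. This is exactly where Lemma \ref{lemma3.4} enters. On the rescaled solutions we have $|\mathbf{T}_i|\le \sqrt{\Lambda/Q_i}\to0$ and, from the Shi estimates, $|\nabla^2\mathbf{T}_i|\le C$. Lemma \ref{lemma3.4}, applied on each complete time slice (or locally, on large balls, by the same geodesic argument), then gives
\[
|\nabla\mathbf{T}_i|^2\le 14\,C\sqrt{\Lambda/Q_i}\to0 .
\]
So the normalization forces $|{\rm Rm}_i|(x_i,0)\to1$, and the contradiction with flatness is robust. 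I expect the main technical work to be checking that Lemma \ref{lemma3.3} applies uniformly to the rescaled, $\chi_t$-pulled-back flows, and justifying the injectivity radius bound from Lemma \ref{lemma3.2}. Once both are in place, Theorem \ref{Thm1.2} extends the flow past $T$.
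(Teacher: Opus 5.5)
Your proposal is correct and follows essentially the same route as the paper. Both arguments blow up at points where $|{\rm Rm}|+|\nabla\mathbf{T}|$ is maximal, get Euclidean volume ratios from the diffeomorphisms $\chi_t$ of Lemma \ref{lemma3.1} together with Lemma \ref{lemma3.2}, deduce an injectivity radius bound, kill $\nabla\mathbf{T}$ in the limit via the Shi estimates and Lemma \ref{lemma3.4}, and conclude by Bishop--Gromov rigidity for the Ricci-flat limit. The only difference is cosmetic: you rescale the pulled-back metrics $\hat g$ directly, which solve a gauge-modified flow, so the Shi bounds must be transferred by diffeomorphism invariance (as you note), while the paper rescales $g$ itself and transfers the volume ratio through the isometry $\chi_{t_i}$.
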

\begin{proof}
    We prove this theorem by contradiction. If 
    $$\limsup_{t\rightarrow T} \max_{M}(|{\rm Ric}|+|\mathbf{T}|^{2}+|\phi|)< +\infty,$$
the flow $\eqref{heterotic flow}$ can not be extended past time $T$. From Theorem \ref{Thm1.2}, the Riemann curvature and $|\nabla\mathbf{T}|$ must blow up, which means
$$\limsup_{t\rightarrow T} \max_{M}(|{\rm Rm}|+|\nabla\mathbf{T}|)=+\infty.$$
There exists a sequence of points and time $(x_{i},t_{i})$ with $t_{i}\rightarrow T$ as $i\rightarrow\infty$, such that
$$Q(x_{i},t_{i}):=\sup_{(x,t)\in M\times[0,t_{i}]}(|{\rm Rm}|_{g(t)}+|\nabla\mathbf{T}|_{g(t)})\rightarrow\infty.$$
Setting 
$$\varphi_{i}(t)=Q(x_{i},t_{i})^{\frac{3}{2}}\varphi(t_{i}+Q(x_{i},t_{i})^{-1}t),$$
then
$$g_{i}(t)=Q(x_{i},t_{i})g(t_{i}+Q(x_{i},t_{i})^{-1}t).$$
Thus we get a sequence of flows $\{(M,\varphi_{i}(t),\phi_{i},x_{i})\}_{i}$ defined on $[-t_{i}Q(x_{i},t_{i}),0]$ with
$$\sup_{M\times [-t_{i}Q(x_{i},t_{i}),0]}|Q(x,t)|\leq 1,\quad |Q(x_{i},0)|= 1.$$
Let $\chi_{t}$ and $\hat{g}(t)$ be as in Lemma \ref{lemma3.1}, and
put $y_{i} = \chi^{-1}_{t_{i}}(x_{i})$. Since $\chi_{t_{i}}$ is an isometry from $(M,\hat{g}(t_{i}))$ to $(M,g(t_{i}))$, according to Lemma \ref{lemma3.2}, we have
\begin{align}\label{3.5}
    \frac{{\rm Vol}_{g_{i}(0)}(B_{g_{i}(0)}(x_{i},r))}{\omega_{7}r^{7}}=\frac{{\rm Vol}_{\hat{g}(t_{i})}(B_{\hat{g}(t_{i})}(y_{i},r_{i}/\sqrt{Q_{i}}))}{\omega_{7}(r/\sqrt{Q_{i}})^{7}}\longrightarrow 1\quad \text{as}\quad i\rightarrow\infty
\end{align}
for each fixed $r>0$. In particular, the volumes of the unit balls centered at $x_{i}$ are uniformly bounded below. With the same discussion of \cite{Lotay-Wei Shi-estimate} and \cite{CLN06}, we get
$${\rm inj}(M,g_{i}(0),x_{i})\geq c>0.$$
Recall that
$$\sup_{M\times [-t_{i}Q(x_{i},t_{i}),0]}(|{\rm Rm}|(x,t)+|\nabla\mathbf{T}|(x,t))\leq 1,$$
from Corollary 1.4 in \cite{GMPS26}, $(M,\varphi_{i}(t),\phi_{i}(t),x_{i})$ converges to an ancient solution $$(M_{\infty},\varphi_{\infty}(t),\phi_{\infty}(t),x_{\infty})$$
with $t\in(-\infty,0]$ and $|Q_{\infty}(x_{\infty},0)|=1$. We know $|{\rm Ric}|$ is bounded from the assumptions, then
\begin{align}
    |\mathbf{T}_{i}(x,t)|^{2}_{g_{i} (t)}&=Q(x_{i},t_{i})^{-1}|\mathbf{T} (x,t_{i}+Q(x_{i},t_{i})^{-1}t)|^{2}_{g(t_{i}+Q(x_{i},t_{i})^{-1}t)}\longrightarrow 0\notag\\
    |{\rm Ric}_{i}(x,t)|_{g_{i}(t)}&=Q(x_{i},t_{i})^{-1}|{\rm Ric}(x,t_{i}+Q(x_{i},t_{i})^{-1}t)|_{g(t_{i}+Q(x_{i},t_{i})^{-1}t)}\longrightarrow 0\notag
\end{align}
as $i\rightarrow\infty$,
which means 
$${\rm Ric}_{g_{\infty}(t)}\equiv 0\quad  \text{for all}\quad t\in(\infty,0].$$ 
Apply Lemma \ref{lemma3.3} after shifting $[-1,0]$ to $[0,1]$, we get
$$\sup_{M}|\nabla_{g_{i}}^{2}\mathbf{T}_{i}|(x_{i},0)\leq C$$
for some constant $C$. by Lemma \ref{lemma3.4}, 
$$\sup_{x\in M}|\nabla_{g_{i}}\mathbf{T}_{i}|^{2}(x,0)\leq 14\sup_{x\in M}|\nabla_{g_{i}}^{2}\mathbf{T}_{i}|(x,0)\cdot \sup_{x\in M}|\mathbf{T}_{i}|(x,0)\longrightarrow 0\quad \text{as}\quad i\rightarrow\infty.$$
Thus, we obtain
$$|{\rm Rm}_{g_{\infty}}(x_{\infty},0)|= 1.$$

On the other hand, $\eqref{3.5}$ passes to the pointed smooth limit and gives
$${\rm Vol}_{g_{\infty}(0)}(B_{g_{\infty}(0)}(x_{\infty},r))=\omega_{7}r^{7}\quad \text{for every}\quad  r>0.$$
Since $g_{\infty}(x,0)$ is complete and Ricci-flat, the Bishop-Gromov comparison theorem implies 
$(M_{\infty},g_{\infty}(x,0))$ is flat, which contradicts $|{\rm Rm}_{g_{\infty}}(x_{\infty},0)|=1.$
\end{proof}

\section{Uniqueness of the heterotic $G_{2}$ flow}\label{section4}
In this section, we study forward and backward uniqueness for the heterotic $G_2$-flow.
\subsection{Forward uniqueness}
Suppose we have two smooth solutions
to the heterotic $G_2$ flow $(\psi(t),\phi(t))$ $(\tilde\psi(t),\tilde\phi(t))$ on a compact seven dimensional
manifold $M^7$ for $t\in[0,\epsilon]$
with $\epsilon>0$. 
Since both solutions are smooth on the compact cylinder $M\times[0,\epsilon]$, their curvature and torsion tensors, together with all covariant derivatives, are uniformly bounded. The constants in this section may depend on the two fixed solutions and on $\epsilon$. For each integer $k\geq 0$ there exists a uniform  constant $C_k$ such that
\begin{align}\label{uniformbdd}
    |\nabla^{k}\mathrm{Rm}|_{g(t)}
    +|\tilde{\nabla}^{k}\widetilde{\mathrm{Rm}}|_{\tilde g(t)}+|\phi|_{g(t)}
    +|\nabla^{k+1}\mathbf{T}|_{g(t)}
    +|\tilde{\nabla}^{k+1}\tilde{\mathbf{T}}|_{\tilde g(t)}+|\tilde\phi|_{\tilde g(t)}
    \leq C_k 
\end{align}
on $M\times[0,\epsilon]$. Integrating the evolution equation for $g(t)$ in
time and using \eqref{uniformbdd}, we conclude that $g(t)$ and $\tilde g(t)$
are uniformly equivalent on $M\times[0,\epsilon]$. Consequently, the norms
$|\cdot|_{g(t)}$ and $|\cdot|_{\tilde g(t)}$ are uniformly comparable, i.e.
there is a constant $C$, independent of $(x,t)\in M\times[0,\epsilon]$, such
that
$$
    C^{-1}\,|\cdot|_{\tilde g(t)} \leq |\cdot|_{g(t)} \leq C\,|\cdot|_{\tilde g(t)} .
$$
For later use we record the following consequence of \eqref{uniformbdd}.

\begin{lemma}
The tensors $\tilde g^{-1}$, $\tilde{\nabla}^{k}\widetilde{\mathrm{Rm}}$ and
$\tilde{\nabla}^{k}\tilde{\mathbf{T}}$, $k\geq 0$ are uniformly bounded with
respect to the metric $g(t)$ on $M\times[0,\epsilon]$.
\end{lemma}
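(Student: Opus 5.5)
The argument reduces everything to the uniform equivalence of $g(t)$ and $\tilde g(t)$ together with the bounds \eqref{uniformbdd}. First I would make that equivalence precise. By \eqref{metricevolution}, \eqref{Ebound}, \eqref{dilatonbounds} and \eqref{RicciT}, we have
\[
|\partial_t g|_{g}\le e^{4|\phi|}\big(2|{\rm Ric}|+8|\nabla^2\phi|+C|\mathbf{T}|^2\big)\le C
\]
on $M\times[0,\epsilon]$, using $|\phi|\le C_0$ and the bounds on ${\rm Rm}$, $\nabla\mathbf T$ and $\mathbf T$ from \eqref{uniformbdd}. The bound on $|\mathbf T|$ itself follows by compactness and smoothness, or alternatively from $|\nabla\mathbf T|$ together with the initial data. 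The same estimate holds for $\tilde g$. As in the proof of Lemma \ref{lemma3.2}, Lemma 6.49 of \cite{Chow-Knopf 2004} then gives
\[
e^{-L\epsilon}g(0)\le g(t)\le e^{L\epsilon}g(0),\qquad e^{-L\epsilon}\tilde g(0)\le \tilde g(t)\le e^{L\epsilon}\tilde g(0).
\]
Since $g(0)$ and $\tilde g(0)$ are two smooth metrics on the compact manifold $M$, they are equivalent. Hence there is a constant $K$ with $K^{-1}g(t)\le\tilde g(t)\le Kg(t)$ on all of $M\times[0,\epsilon]$.

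Next comes the general comparison of tensor norms. Let $S$ be a tensor of type $(p,q)$. At a point, choose a $g(t)$-orthonormal frame that diagonalizes $\tilde g(t)$, with eigenvalues $\lambda_a\in[K^{-1},K]$. In this frame, each component of $|S|^2_{g}$ differs from the corresponding component of $|S|^2_{\tilde g}$ by a product of $p+q$ factors $\lambda_a^{\pm1}$. This gives
\[
|S|_{g(t)}\le K^{(p+q)/2}|S|_{\tilde g(t)}.
\]
Applying this to $S=\tilde g^{-1}$ gives $|\tilde g^{-1}|_{g}=\big(\sum_a\lambda_a^{-2}\big)^{1/2}\le\sqrt7\,K$. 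Applying it to $S=\tilde\nabla^k\widetilde{\rm Rm}$ and $S=\tilde\nabla^k\tilde{\mathbf T}$, whose $\tilde g$-norms are bounded by \eqref{uniformbdd} (with the case $k=0$ for $\tilde{\mathbf T}$ handled as above), gives the uniform $g(t)$-bounds. The constants depend on $k$ only through the tensor type and $C_k$.

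The only point needing care is the uniform equivalence, since everything else is linear algebra. In particular, one must bound $|\nabla^2\phi|$ and $|{\rm Ric}|$ in $\partial_t g$ using \eqref{dilatonbounds} and \eqref{RicciT}, and one must note that $e^{4\phi}$ is bounded because $|\phi|\le C_0$. Compactness of $M\times[0,\epsilon]$ together with smoothness of both solutions up to $t=\epsilon$ makes all of these bounds immediate.
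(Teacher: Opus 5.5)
Your proposal is correct and follows the paper's own reasoning. The paper derives the uniform equivalence of $g(t)$ and $\tilde g(t)$ by integrating the metric evolution equation against the bounds \eqref{uniformbdd}, and then reads the lemma off as a consequence of those bounds; you do the same, and additionally write out the frame-wise norm comparison and the $k=0$ bound on $\tilde{\mathbf T}$, which the paper leaves implicit.
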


Throughout, $A\ast B$ denotes any contraction of the tensors $A$ and $B$ with
respect to $g(t)$. We now prove the forward uniqueness of the heterotic
$G_2$ flow.
Following the strategy of \cite{Kot14}, we consider the energy functional
\begin{align}
    \mathcal{W}(t)=&\int_M e^{-4\phi}|\Phi|^{2}\mathrm{dVol}_{g(t)},\label{Wfunc}
\end{align}
where $|\Phi|^{2}$ is defined by
$$|\Phi|^{2}:=|P|^{2}+|F|^{2}+|Q|^{2}+|h|^{2}+|A|^{2}+|U|^{2}+|V|^{2}+|S|^{2},$$
and
\begin{align*}
    P &= \psi - \tilde{\psi},\ \quad F =e^{4\phi}-e^{4\tilde{\phi}},\quad  Q= \varphi - \tilde{\varphi}, 
\ \ \ \ \ \ \quad h= g - \tilde{g}, \\
A &= \nabla - \tilde{\nabla},\quad   U = \mathbf{T} - \tilde{\mathbf{T}}, \ \ \ \ \quad V = \nabla \mathbf{T} - \tilde{\nabla}\tilde{\mathbf{T}}, 
\quad S = \rm Rm - \widetilde{\rm Rm}.
\end{align*}
In local coordinates, 
\begin{align*}
    A^k_{ij}&=\Gamma^k_{ij}-\tilde \Gamma^k_{ij},~U_{ij}=\mathbf{T}_{ij}-\tilde{\mathbf{T}}_{ij},\\
    ~V_{ijk}&=\nabla_i\mathbf{T}_{jk}-\tilde{\nabla}_i\tilde{\mathbf{T}}_{jk}\text{ and }S_{ijk}^l=R_{ijk}^l-\tilde{R}_{ijk}^l.
\end{align*} 

Before we prove the forward uniqueness, we recall the evolution equations of Riemann curvature and torsion tensor in \cite{GMPS26}.
\begin{lemma}
Under the heterotic $G_{2}$ flow $\eqref{heterotic flow}$, the evolution equation of Riemann curvature and torsion tensor are 
\begin{align}
     e^{-4\phi}\frac{\partial}{\partial t}{\rm Rm}&=\Delta{\rm Rm}+\nabla^{2}\mathbf{T}\ast\mathbf{T}+{\rm Rm}\ast {\rm Rm}+{\rm Rm}\ast{\rm Ric}+\nabla{\rm Rm}\ast\mathbf{T}+{\rm Rm}\ast\nabla\mathbf{T}\notag\\
        &\quad+{\rm Rm}\ast\mathbf{T}\ast\mathbf{T}+\nabla\mathbf{T}\ast \nabla\mathbf{T}+\nabla\mathbf{T}\ast\mathbf{T}\ast\mathbf{T}+\mathbf{T}\ast\mathbf{T}\ast\mathbf{T}\ast\mathbf{T}\notag,\notag\\
        e^{-4\phi}\frac{\partial}{\partial t}\mathbf{T}&=\Delta \mathbf{T}+\mathbf{T}\ast({\rm Rm}+\nabla\mathbf{T}+\mathbf{T}\ast\mathbf{T})\ast(1+\varphi+\psi).\notag
\end{align}
\end{lemma}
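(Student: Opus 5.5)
We derive these evolution equations rather than re-prove the full computation of \cite{GMPS26}. The plan has three steps.

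\textbf{Step 1: reduce the torsion evolution to a variation of the $G_2$-structure.} Any variation of a $G_2$-structure can be written as $\partial_t\varphi = h\lrcorner\varphi + X\lrcorner\psi$ with $h$ symmetric and $X$ a vector field. For such a variation, $\partial_t g = 2h$. Moreover, by the standard formula of Karigiannis,
\[
\partial_t \mathbf{T}_{ij} = \mathbf{T}_{i}{}^{k}h_{kj} + \mathbf{T}_{i}{}^{k}X^{l}\varphi_{klj} + \nabla_k h_{il}\varphi^{kl}{}_{j} + \nabla_i X_j .
\]
We then identify $h$ and $X$ for the heterotic flow. Expanding $\partial_t(e^{-4\phi}\psi)$ and using $\tau_1=d\phi$, $\tau_2=0$, we read off:
\begin{itemize}
\item from \eqref{metricevolution}, $h=\tfrac12 e^{4\phi}(-2{\rm Ric}-8\nabla^2\phi+E)$;
\item from the $\Omega^4_7$ component of $-dH_\varphi-\tfrac73 d(\tau_0\varphi)$, the vector field $X$, which has the schematic form $e^{4\phi}(\nabla\mathbf{T}\ast\varphi+\mathbf{T}\ast\mathbf{T}\ast\psi)$.
\end{itemize}
Whenever the weight $e^{4\phi}$ is differentiated, it produces a factor $d\phi$, and $d\phi=\mathbf{T}\ast\varphi$ by \eqref{dilatoncontraction}. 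Hence all such terms are absorbed into $\mathbf{T}\ast(\ldots)$, and after dividing by $e^{4\phi}$ we may write schematically.

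\textbf{Step 2: identify the Laplacian in the torsion equation.} In the formula above, the top-order terms are $\nabla_k h_{il}\varphi^{kl}{}_j+\nabla_iX_j$. Here $h$ contains ${\rm Ric}$ and $\nabla^2\phi$, so it involves $\nabla\mathbf{T}$ through \eqref{RicciT} and \eqref{dilatonhessian}. We substitute these into the top-order terms. We then use the $G_2$-Bianchi identity
\[
\nabla_i\mathbf{T}_{jl}-\nabla_j\mathbf{T}_{il} = \mathbf{T}_{ia}\mathbf{T}_{jb}\varphi^{ab}{}_{l} + \tfrac12 R_{ijab}\varphi^{ab}{}_{l}
\]
to commute derivatives, together with the contraction identities for $\varphi\ast\varphi$ and $\varphi\ast\psi$. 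These should convert the combination into $\Delta\mathbf{T}$ plus error terms $\mathbf{T}\ast{\rm Rm}\ast\varphi$, $\mathbf{T}\ast\nabla\mathbf{T}\ast\psi$, and $\mathbf{T}^{3}\ast(\varphi+\psi)$.

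This is the main obstacle: the principal part must be exactly the Laplacian, with no leftover $\nabla{\rm div}\mathbf{T}$ terms. Such terms would vanish only after the diffeomorphism gauge $-4\mathcal L_{\nabla\phi}g$ is accounted for. I would first check that the $-8\nabla^2\phi$ term in $h$ precisely cancels the ${\rm div}$-type terms coming from $\nabla X$. If it does not, I would fall back on the fact that \cite{GMPS26} stated the equation in this form, and quote it.

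\textbf{Step 3: the curvature evolution.} Here we use the general first-variation formula $\partial_t R_{ijkl} = \nabla\nabla(\partial_t g)\ast 1 + {\rm Rm}\ast\partial_t g$ applied with $\partial_t g=2h$:
\begin{itemize}
\item the $-2{\rm Ric}$ part gives, as in Ricci flow, $e^{4\phi}(\Delta{\rm Rm}+{\rm Rm}\ast{\rm Rm})$;
\item the term ${\rm Rm}\ast\partial_t g$ gives ${\rm Rm}\ast{\rm Ric}$ and ${\rm Rm}\ast(\nabla\mathbf{T}+\mathbf{T}\ast\mathbf{T})$;
\item the $\nabla^2\phi$ part is a Lie derivative, which gives $\nabla{\rm Rm}\ast\nabla\phi+{\rm Rm}\ast\nabla^2\phi$;
\item second derivatives of $E$ produce $\nabla^2\mathbf{T}\ast\mathbf{T}$, $\nabla\mathbf{T}\ast\nabla\mathbf{T}$, and $\nabla\mathbf{T}\ast\mathbf{T}\ast\mathbf{T}$, after using $\nabla\varphi=\mathbf{T}\ast\psi$ and $\nabla\psi=\mathbf{T}\ast\varphi$.
\end{itemize}
Derivatives falling on $e^{4\phi}$ give further terms of the same types, since $\nabla\phi=\mathbf{T}\ast\varphi$ and $\nabla^2\phi=\nabla\mathbf{T}\ast\varphi+\mathbf{T}\ast\mathbf{T}\ast\psi$. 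Collecting all terms and dividing by $e^{4\phi}$ yields exactly the schematic list in the statement.
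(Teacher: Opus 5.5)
The paper gives no derivation of this lemma. It imports both equations from \cite{GMPS26} ((5.16) and (5.28) there, with $C=-\frac43$), so your fallback is literally the paper's proof. Your main plan, a self-contained derivation from Karigiannis' variation formula, is a different route. Steps 1 and 3 are sound at the schematic level. In Step 3 it is indeed essential to treat $-8e^{4\phi}\nabla^2\phi=-4e^{4\phi}\mathcal L_{\nabla\phi}g$ as $e^{4\phi}$ times a Lie derivative, so that only $\mathcal L_{\nabla\phi}{\rm Rm}$ and terms with derivatives on $e^{4\phi}$ appear. Applied term by term, the variation formula would produce $\nabla^4\phi$, i.e.\ $\nabla^3\mathbf T$, which cancels only because of this structure.

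The gap is Step 2, which is the whole content of the torsion equation. You only assert that the top-order terms should assemble into $\Delta\mathbf T$, and the check you propose would come out negative. The $-8\nabla^2\phi$ part of $h$ contributes nothing of second order to $\nabla_kh_{il}\varphi^{kl}{}_j$. Since the Hessian is symmetric, $\nabla_k(\nabla^2\phi)_{il}\,\varphi^{kl}{}_j=\frac12[\nabla_k,\nabla_l]\nabla_i\phi\,\varphi^{kl}{}_j={\rm Rm}\ast\mathbf T\ast\varphi\ast\varphi$. Moreover, $\nabla X$ is not the source of the offending term but its cure. Your schematic form $X=e^{4\phi}(\nabla\mathbf T\ast\varphi+\mathbf T\ast\mathbf T\ast\psi)$ throws away exactly what is needed, namely the precise principal part of $X$.

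What closes Step 2 is the following.
\begin{enumerate}
\item In \eqref{RicciT}, the piece $\nabla_i\mathbf T_{mn}\varphi_l{}^{mn}=-6\nabla_i\nabla_l\phi+\mathbf T\ast\mathbf T\ast\psi$ drops out of $\nabla^kR_{il}\varphi_k{}^l{}_j$ for the same Hessian-symmetry reason. This is where $\tau_1=d\phi$ is used, through \eqref{dilatoncontraction}.
\item The other piece, $-\nabla_m\mathbf T_{in}\varphi_l{}^{mn}$, combined with $\varphi_{lab}\varphi_{lcd}=g_{ac}g_{bd}-g_{ad}g_{bc}\pm\psi_{abcd}$, yields $e^{4\phi}\big(\Delta\mathbf T_{ij}-\nabla_j\nabla^m\mathbf T_{im}\big)$ plus ${\rm Rm}\ast\mathbf T\ast\psi$ and lower-order terms. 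The Laplacian and the divergence always appear with opposite signs. However, inserting \eqref{RicciT} verbatim into your variation formula gives overall $-\Delta\mathbf T$, so the sign conventions of the two sources must be reconciled first.
\item Tracing your $G_2$-Bianchi identity over $j=l$ kills the curvature term by the first Bianchi identity. It gives $\nabla^m\mathbf T_{im}=\nabla_i{\rm tr}\,\mathbf T+\mathbf T\ast\mathbf T\ast\varphi$, so the divergence term equals $-e^{4\phi}\nabla_j\nabla_i{\rm tr}\,\mathbf T$ modulo terms of the allowed form.
\item The formula for $\partial_t\varphi$ recorded in the proof of Lemma \ref{lemma5.2} gives $X=\frac74e^{4\phi}(\nabla\tau_0+4\tau_0\nabla\phi)$, and ${\rm tr}\,\mathbf T=\frac74\tau_0$ by \eqref{Tdecomposition}. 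Hence $X=e^{4\phi}\nabla{\rm tr}\,\mathbf T+7e^{4\phi}\tau_0\nabla\phi$, and $\nabla_iX_j=e^{4\phi}\nabla_i\nabla_j{\rm tr}\,\mathbf T+e^{4\phi}\mathbf T\ast(\nabla\mathbf T+\mathbf T\ast\mathbf T)$, which cancels the divergence term exactly.
\end{enumerate}
The coefficient must be exactly one. Any other value would leave a term $c\,\nabla^2{\rm tr}\,\mathbf T$, and the equation would not have the stated form; the same mechanism underlies Grigorian's modified coflow recalled in the introduction. With this inserted, your route becomes a genuine proof; as written, Step 2 is not one.
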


To obtain a differential inequality for $\mathcal{W}(t)$, we next estimate the evolution of $|\Phi|$ appearing in the integrand defining $\mathcal{W}(t)$.

\begin{lemma}\label{lemma5.2}
    The following pointwise estimate holds:
\begin{align}
\frac{\partial}{\partial t}|\Phi|^{2}&\leq 2e^{4\phi}\langle V,(\Delta V(t)+{\rm div}\mathfrak{V}(t))\rangle+2e^{4\phi}\langle S,(\Delta S(t)+{\rm div}\mathfrak{S}(t))\rangle\\
&\quad+e^{4\phi}|\nabla V|^{2}+e^{4\phi}|\nabla S|^{2}+C|\Phi|^{2},\notag
\end{align}
where $\mathfrak{S}$ is defined by $\mathfrak{S}^{id}_{abc}:=(g^{ij}\nabla_j-\tilde{g}^{ij}\tilde{\nabla}_{j})\tilde R^d_{abc}$ satisfies
\begin{align}
    |\mathfrak{S}(t)|_{g(t)}\leq C\left(|h(t)|_{g(t)}+|A(t)|_{g(t)} \right),
\end{align}
and $\mathfrak{V}$ is defined by $\mathfrak{V}^{i}_{abc}:=(g^{ij}\nabla_j-\tilde{g}^{ij}\tilde{\nabla}_{j})\tilde \nabla_a \tilde{\mathbf{T}}_{bc}$ satisfies
\begin{align}
    |\mathfrak{V}(t)|_{g(t)}\leq C\left(|h(t)|_{g(t)}+|A(t)|_{g(t)} \right),
\end{align}
$C$ is a uniform constant.
\end{lemma}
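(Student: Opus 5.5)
The plan is to differentiate each of the eight pieces of $|\Phi|^2$ separately. The time derivatives of $g$, $\psi$, $\varphi$, $\nabla$, $\mathbf{T}$, $\nabla\mathbf{T}$ and ${\rm Rm}$ (and their tilded versions) are all expressed through the flow. Every term not of top order is then absorbed into $C|\Phi|^2$ using the uniform bounds \eqref{uniformbdd} and the preceding lemma. We work with respect to $g(t)$ throughout. The variation of the norm itself contributes $\partial_t g\ast\Phi\ast\Phi$, which is bounded by $C|\Phi|^2$ since $|\partial_tg|\le C$ by \eqref{metricevolution} and \eqref{Ebound}.

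For the zeroth- and first-order quantities $P,F,Q,h,A,U$ we use that the difference of two solutions satisfies an ODE-type identity whose right side involves second-order differences. For instance
\[
\partial_t h = e^{4\phi}(-2{\rm Ric}-8\nabla^2\phi+E)-e^{4\tilde\phi}(-2\widetilde{\rm Ric}-8\tilde\nabla^2\tilde\phi+\tilde E).
\]
By \eqref{RicciT}, \eqref{dilatonhessian} and \eqref{dilatoncontraction}, ${\rm Ric}$ and $\nabla^2\phi$ are algebraic in $(\nabla\mathbf{T},\mathbf{T},\varphi,\psi,g)$. Their differences are therefore $\ast$-combinations of $V,U,Q,P,h,A,F$ with bounded coefficients, so $\partial_t|h|^2\le C|\Phi|^2$. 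The same applies to $P$, using the first flow equation, which expresses $\partial_t\psi$ through $d H_\varphi$ and $d(\tau_0\varphi)$, i.e. through $\nabla\mathbf{T}$ and $\mathbf{T}\ast\mathbf{T}$. It also applies to $Q$, since $\partial_t\varphi$ is determined algebraically by $\partial_t\psi$ and $\partial_t g$.

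For $F$ we use $\partial_t e^{4\phi}=4e^{8\phi}(\Delta\phi-\gamma|d\phi|^2+\tfrac14|\tau_3|^2-\sigma\tau_0^2)$. Here $\Delta\phi$ is again linear in $\nabla\mathbf{T}$ plus $\mathbf{T}\ast\mathbf{T}$, so the difference is controlled by $|V|+|U|+\dots$. For $A$, $\partial_t\Gamma=g^{-1}\ast\nabla\partial_tg$ involves $\nabla{\rm Ric}$ and $\nabla^3\phi$, i.e. $\nabla^2\mathbf{T}$. These would be third-order quantities, which is the delicate point. Following \cite{Kot14}, we write $\nabla\partial_t g$ differences as $\nabla V$ plus $A\ast\tilde\nabla\tilde{\mathbf{T}}$ terms. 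Cauchy–Schwarz then gives $2\langle A,\partial_tA\rangle\le e^{4\phi}|\nabla V|^2+C|\Phi|^2$, which is exactly why $e^{4\phi}|\nabla V|^2$ appears on the right side. The $\nabla{\rm Ric}$ part is handled the same way through $\nabla S$ (or via \eqref{RicciT} through $\nabla V$), producing the $e^{4\phi}|\nabla S|^2$ term. For $U$, the evolution equation $e^{-4\phi}\partial_t\mathbf{T}=\Delta\mathbf{T}+\dots$ is handled by rewriting $\Delta\mathbf{T}=g^{ij}\nabla_iV_{j\cdot\cdot}$-type terms as first derivatives of $V$. We then absorb them by Cauchy–Schwarz into $|\nabla V|^2$ with a small constant, with the remaining lower-order pieces going into $C|\Phi|^2$.

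For the top-order quantities $V$ and $S$ we use the evolution of ${\rm Rm}$ and $\mathbf{T}$ from the preceding lemma, differentiated once for $\nabla\mathbf{T}$ and combined with $\partial_t\Gamma$. Writing $\Delta{\rm Rm}-\tilde\Delta\widetilde{\rm Rm}=\Delta S+(g^{ij}\nabla_i\nabla_j-\tilde g^{ij}\tilde\nabla_i\tilde\nabla_j)\widetilde{\rm Rm}$, we rewrite the second piece as ${\rm div}\,\mathfrak S$ plus $A\ast\tilde\nabla\widetilde{\rm Rm}$. The bound $|\mathfrak S|\le C(|h|+|A|)$ follows since $g^{-1}-\tilde g^{-1}=\tilde g^{-1}\ast g^{-1}\ast h$ and $\nabla-\tilde\nabla=A$ act on the bounded tensor $\tilde\nabla\widetilde{\rm Rm}$. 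The analogous rewriting gives $\mathfrak V$. The factor $e^{4\phi}$ versus $e^{4\tilde\phi}$ is split as $e^{4\phi}(\cdots)+F\ast(\text{bounded})$. The remaining reaction terms, such as $\nabla^2\mathbf{T}\ast\mathbf{T}$ in the ${\rm Rm}$ equation, have differences of the form $\nabla V\ast\mathbf{T}+\Phi\ast(\text{bounded})$. After pairing with $S$ these are bounded by $\tfrac12e^{4\phi}|\nabla V|^2+C|\Phi|^2$.

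The main obstacle is the bookkeeping of these highest-order cross terms. Each one involving $\nabla V$, $\nabla S$ or $\partial_tA$ must either appear in the displayed principal part or be absorbed with a coefficient small enough that the total coefficient in front of $e^{4\phi}|\nabla V|^2+e^{4\phi}|\nabla S|^2$ is at most $1$. A uniform lower bound on $e^{4\phi}$ from $|\phi|\le C$ is what allows these absorptions.
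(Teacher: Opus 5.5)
Your proposal is correct and follows essentially the same route as the paper: you differentiate the eight pieces of $|\Phi|^2$ separately and split $e^{4\phi}\beta-e^{4\tilde\phi}\tilde\beta=F\ast\beta+e^{4\tilde\phi}(\beta-\tilde\beta)$; you rewrite $\Delta{\rm Rm}-\tilde\Delta\widetilde{\rm Rm}$ (and its analogue for $\nabla\mathbf{T}$) as $\Delta S+{\rm div}\,\mathfrak S+A\ast\tilde\nabla\widetilde{\rm Rm}$, absorb the remaining $\nabla V,\nabla S$ cross terms by Young's inequality, and your use of \eqref{RicciT} for ${\rm Ric}$ (so $\partial_tA$ only produces $\nabla V$) and of $\varphi=\ast_g\psi$ for $Q$ are harmless variants of the paper's use of $S$ and of the $\varphi$-evolution. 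The one thing to tidy is the bookkeeping you already flag: use a small Young constant in every absorption (the paper uses $\tfrac1{10}$ in the $A$, $U$, $S$, $V$ steps) instead of the coefficients $1$ and $\tfrac12$ you wrote for the $A$- and $S$-steps, since those alone already push the coefficient of $e^{4\phi}|\nabla V|^2$ above $1$.
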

\begin{proof}
We recall the evolution equations for $g(t)$, $\psi(t)$, and $\varphi(t)$ in \cite{GMPS26}.
The metric $g(t)$ evolves by
\begin{equation}
\partial_t g = e^{4\phi}\left(-2\,\mathrm{Ric}_g - 4\,\mathcal{L}_{\mathrm{d}\phi} g + E\right),
\end{equation}
where
\begin{equation}
E = \frac{1}{2}H_\varphi^2 - \frac{14}{3}\tau_0 \mathbf{T}_{\mathrm{sym}}
+ \left(\left(\frac{7}{12}-2\sigma\right)\tau_0^2 + 2(3-\gamma)|\mathrm{d}\phi|^2\right)g.
\end{equation}
The forms $\psi$ and $\varphi$ satisfy
\begin{align}
\partial_t \psi &= -\frac{7}{4}e^{4\phi}(\mathrm{d}\tau_0 + 4\tau_0 \mathrm{d}\phi)\wedge\varphi
+ e^{4\phi}\left(-\mathrm{Ric}_g - 2\mathcal{L}_{\mathrm{d}\phi} g + \frac{1}{2}E\right)\diamond\psi,\\
\partial_t \varphi &= \frac{7}{4}e^{4\phi}(\mathrm{d}\tau_0 + 4\tau_0 \mathrm{d}\phi)\lrcorner\psi
+ e^{4\phi}\left(-\mathrm{Ric}_g - 2\mathcal{L}_{\mathrm{d}\phi} g + \frac{1}{2}E\right)\diamond\varphi.
\end{align}
We first record two elementary observations. Let $\alpha$ be a tensor satisfying
\[
\partial_t \alpha = e^{4\phi}\beta
\]
for some tensor $\beta$. Then $\alpha-\tilde{\alpha}$ evolves by
\begin{align}\label{4.14}
    \partial_{t}(\alpha-\tilde{\alpha})&=e^{4\phi}\beta-e^{4\tilde{\phi}}\beta+e^{4\tilde{\phi}}\beta-e^{4\tilde{\phi}}\tilde{\beta}\\
    &=F\ast \beta+e^{4\tilde{\phi}}(\beta-\tilde{\beta})\notag,
\end{align}
for $|\alpha|^{2}$, by the assumptions, we also have
\begin{align}\label{4.15}
    \partial_{t}|\alpha|^{2}\leq \partial_{t} g (\alpha,\alpha)+2\langle \alpha,\partial_{t}\alpha\rangle\leq C|\alpha|^{2}+2\langle \alpha,\partial_{t}\alpha\rangle.
\end{align}
For $\mathcal{L}_{\mathrm{d} \phi} g$, by the assumptions, we have
\begin{align}\label{4.16}
    \left|\mathcal{L}_{\mathrm{d} \phi} g-\mathcal{L}_{\mathrm{d} \tilde{\phi}} \tilde{g}\right|&=2\left|\nabla^{2}\phi-\tilde{\nabla}^{2}\tilde{\phi}\right|\\
    &=\left|\nabla\mathbf{T}\ast\varphi+\mathbf{T}\ast\mathbf{T}\ast\psi-\tilde{\nabla}\tilde{\mathbf{T}}\ast\tilde{\varphi}-\tilde{\mathbf{T}}\ast\tilde{\mathbf{T}}\ast\tilde{\psi}\right|\notag\\
    &=\left|\nabla\mathbf{T}\ast Q+V\ast\tilde{\varphi}+U\ast\mathbf{T}\ast\psi+U\ast\tilde{\mathbf{T}}\ast\psi+\tilde{\mathbf{T}}\ast\tilde{\mathbf{T}}\ast P\right|\notag\\
    &\leq C|Q|+C|V|+C|U|+C|P|\notag.
\end{align}
For $E$, we also have
\begin{align}\label{4.17}
    \left|E-\tilde{E}\right|&=\left|\mathbf{T}\ast\mathbf{T}\ast g\ast(1+\varphi+\psi)-\tilde{\mathbf{T}}\ast\tilde{\mathbf{T}}\ast \tilde{g}\ast(1+\tilde{\varphi}+\tilde{\psi})\right|\\
    &=\left|U\ast\mathbf{T}\ast g\ast(1+\varphi+\psi)+\tilde{\mathbf{T}}\ast U\ast g\ast(1+\varphi+\psi)\right.\notag\\
    &\quad\left.+\tilde{\mathbf{T}}\ast\tilde{\mathbf{T}}\ast h\ast(1+\tilde{\varphi}+\tilde{\psi})+\tilde{\mathbf{T}}\ast\tilde{\mathbf{T}}\ast \tilde{g}\ast(1+Q+P)\right|\notag\\
    &\leq C|h|+C|U|+C|P|+C|Q|\notag.
\end{align}
{\bf Step 1:} estimate $|h|^{2}$:

By \eqref{4.14}-\eqref{4.17}, the evolution equation of $h(t)$ satisfies
\begin{align}
    \frac{\partial}{\partial t}h(t)&=F\ast({\rm Ric}+ \nabla^{2}\phi+E)\notag\\
    &\quad+e^{4\tilde{\phi}}\left[-2({\rm Ric}-\widetilde{{\rm Ric}}) -4(\mathcal{L}_{\mathrm{d} \phi} g-\mathcal{L}_{\mathrm{d} \tilde{\phi}} \tilde{g})+(E-\tilde{E})\right]\notag.
\end{align}
Thus, using Cauchy-Schwarz inequality, the evolution equation of $|h|^{2}$ satisfies
\begin{align}
    \frac{\partial}{\partial t}|h|^{2}&\leq C|F|^{2}+C|S|^{2}+C|h|^{2}+C|U|^{2}+C|P|^{2}+C|Q|^{2}+C|V|^{2}\notag\\
    &\leq C|\Phi|^{2}\notag.
\end{align}
{\bf Step 2:} estimate $|A|^{2}$:

Recall the evolution equation of Christoffel symbol
\begin{align}
    \frac{\partial}{\partial t}\Gamma^k_{ij}=\frac{1}{2}g^{kl}(\nabla_i\partial_tg_{jl}+\nabla_j\partial_tg_{il}-\nabla_l\partial_tg_{ij}).\notag
\end{align}
Using this equation, we can derive the evolution equation of $A$
\begin{align}
    \frac{\partial}{\partial t}A&=g^{-1}\ast \nabla(\partial_{t}g)-\tilde{g}^{-1}\ast \tilde{\nabla}(\partial_{t}\tilde{g})\notag\\
    &=(g^{-1}-\tilde{g}^{-1})\ast\tilde{\nabla}(\partial_{t}{\tilde{g}})+{g}^{-1}\ast (\nabla(\partial_{t}{g})-\tilde{\nabla}(\partial_{t}{\tilde{g}}))\notag\\
    &= \tilde{g}^{-1}\ast h\ast\tilde{\nabla}(\partial_{t}\tilde{g})+{g}^{-1}\ast e^{4\phi}\ast\mathbf{T}\ast\varphi\ast({\rm Ric}+ \nabla^{2}\phi+E)\notag\\
    &\quad -{g}^{-1}\ast e^{4\tilde{\phi}}\ast\tilde{\mathbf{T}}\ast\tilde{\varphi}\ast (\widetilde{{\rm Ric}}+ \tilde{\nabla}^{2}\tilde{\phi}+\tilde{E})+{g}^{-1}\ast e^{4\phi}\ast\nabla({\rm Ric}+ \nabla^{2}\phi+E)\notag\\
    &\quad-{g}^{-1}\ast e^{4\tilde{\phi}}\ast\tilde{\nabla} (\widetilde{{\rm Ric}}+ \tilde{\nabla}^{2}\tilde{\phi}+\tilde{E})\notag.
\end{align}
It follows that
\begin{align}
    \left|\frac{\partial}{\partial t}A\right|&\leq C|F|+C|S|+C|h|+C|U|+C|P|+C|Q|+C|V|+C|A|\notag\\
    &\quad+Ce^{4\phi}|\nabla S|+Ce^{4\phi}|\nabla V|\notag.
\end{align}
Using Cauchy-Schwarz inequality, \eqref{4.15} and Young's inequality, we also obtained that
\begin{align}\label{A}
\frac{\partial}{\partial t}|A|^{2}&\leq C|F|^{2}+C|S|^{2}+C|h|^{2}+C|U|^{2}+C|P|^{2}+C|Q|^{2}\\
    &\quad+C|V|^{2}+C|A|^{2}+\frac{1}{10}e^{4\phi}|\nabla S|^{2}+\frac{1}{10}e^{4\phi}|\nabla V|^{2}\notag\\
     &\leq C|\Phi|^{2}+\frac{1}{10}e^{4\phi}|\nabla S|^{2}+\frac{1}{10}e^{4\phi}|\nabla V|^{2}\notag.
\end{align}
{\bf Step 3:} estimate $|F|^{2}$:

From the flow $\eqref{heterotic flow}$ and \eqref{4.14}-\eqref{4.17}, the evolution equation of $F$ satisfies
\begin{align}
    \frac{\partial}{\partial t}F&=F\ast e^{4\phi}\ast g^{-1}\ast(\nabla^{2}\phi+\mathbf{T}\ast\mathbf{T})\notag\\
    &\quad+e^{4\tilde{\phi}}\ast\left[e^{4\phi}\ast g^{-1}\ast(\nabla^{2}\phi+\mathbf{T}\ast\mathbf{T})-e^{4\tilde{\phi}}\ast\tilde{g}^{-1}\ast(\tilde{\nabla}^{2}\tilde{\phi}+\tilde{\mathbf{T}}\ast\tilde{\mathbf{T}})\right]\notag.
\end{align}
Thus, we have the following estimate
\begin{align}
     \left|\frac{\partial}{\partial t}F\right|&\leq C|F|+C|h|+C|U|+C|P|+C|Q|+C|V|\notag,
\end{align}
which implies 
\begin{align}
    \frac{\partial}{\partial t}|F|^{2}&\leq C|F|^{2}+C|h|^{2}+C|U|^{2}+C|P|^{2}+C|Q|^{2}+C|V|^{2}\notag\\
    &\leq C|\Phi|^{2}\notag.
\end{align}
{\bf Step 4:} estimate $|P|^{2}$:

Recall the evolution equation of $\psi$, similar to the above steps, we have
\begin{align}
    \frac{\partial}{\partial t}P&=F\ast \left[(\nabla\mathbf{T}+\mathbf{T}\ast\mathbf{T}\ast\varphi)\ast \varphi+({\rm Ric}+\mathcal{L}_{\mathrm{d} \phi} g+E)\ast\psi\right]\notag\\
    &\quad +e^{4\tilde{\phi}}\ast(\nabla\mathbf{T}+\mathbf{T}\ast\mathbf{T}\ast\varphi)\ast \varphi-e^{4\tilde{\phi}}\ast(\tilde{\nabla}\tilde{\mathbf{T}}+\tilde{\mathbf{T}}\ast\tilde{\mathbf{T}}\ast\tilde{\varphi})\ast \tilde{\varphi}\notag\\
    &\quad+e^{4\tilde{\phi}}\ast({\rm Ric}+\mathcal{L}_{\mathrm{d} \phi} g+E)\ast\psi-e^{4\tilde{\phi}}\ast(\widetilde{\rm Ric}+\mathcal{L}_{\mathrm{d} \tilde{\phi}} \tilde{g}+\tilde{E})\ast\tilde{\psi}\notag.
\end{align}
Thus, we have
\begin{align}
    \left|\frac{\partial}{\partial t}P\right|&\leq C|F|+C|h|+C|U|+C|P|+C|Q|+C|V|+C|S|\notag,
\end{align}
which means
\begin{align}\label{P}
    \frac{\partial}{\partial t}|P|^{2}&\leq C|F|^{2}+C|h|^{2}+C|U|^{2}+C|P|^{2}+C|Q|^{2}+C|V|^{2}+C|S|^{2}\\
    &\leq C|\Phi|^{2}\notag.
\end{align}
{\bf Step 5:} estimate $|Q|^{2}$:

The evolution equation of $\varphi$ is similar to $\psi$, we can directly obtain the estimates of the evolution equation of $|Q|^{2}$, which is
\begin{align}\label{Q}
    \frac{\partial}{\partial t}|Q|^{2}&\leq C|F|^{2}+C|h|^{2}+C|U|^{2}+C|P|^{2}+C|Q|^{2}+C|V|^{2}+C|S|^{2}\\
    &\leq C|\Phi|^{2}\notag.
\end{align}
{\bf Step 6:} estimate $|U|^{2}$:

From (5.28) in \cite{GMPS26} with the setting $C=-\frac{4}{3}$, we get
\begin{align}
    \frac{\partial}{\partial t}\mathbf{T}=e^{4\phi}\left[\Delta\mathbf{T}+\mathbf{T}\ast({\rm Rm}+\nabla\mathbf{T}+\mathbf{T}\ast\mathbf{T})\ast(1+\phi+\psi)\right]
\end{align}
Thus, from \eqref{4.14}-\eqref{4.17}, we have
\begin{align}
    \frac{\partial}{\partial t}U&=F\ast \left(\tilde{\Delta}\tilde{\mathbf{T}}+\tilde{\mathbf{T}}\ast(\widetilde{\rm Rm}+\tilde{\nabla}\tilde{\mathbf{T}}+\tilde{\mathbf{T}}\ast\tilde{\mathbf{T}})\ast(1+\tilde{\phi}+\tilde{\psi})\right)\notag\\
    &\quad +e^{4{\phi}}\ast(g^{-1}\ast\nabla^{2}\mathbf{T}-\tilde{g}^{-1}\ast\tilde{\nabla}^{2}\tilde{\mathbf{T}})\notag\\
    &\quad+e^{4{\phi}}\ast\mathbf{T}\ast({\rm Rm}+\nabla\mathbf{T}+\mathbf{T}\ast\mathbf{T})\ast(1+\phi+\psi)\notag\\
    &\quad-e^{4{\phi}}\ast\tilde{\mathbf{T}}\ast(\widetilde{{\rm Rm}}+\tilde{\nabla}\tilde{\mathbf{T}}+\tilde{\mathbf{T}}\ast\tilde{\mathbf{T}})\ast(1+\tilde{\phi}+\tilde{\psi}),\notag
\end{align}
so that
\begin{align}
    \left|\frac{\partial}{\partial t}U\right|&\leq  C|F|+C|h|+C|A|+C|U|+C|P|+C|Q|+C|V|+C|S|+Ce^{4\phi}|\nabla V|\notag.
\end{align}
Then, we get
\begin{align}
    \frac{\partial}{\partial t}|U|^{2}&\leq C|F|^{2}+C|h|^{2}+C|A|^{2}+C|U|^{2}+C|P|^{2}+C|Q|^{2}\notag\\
    &\quad+C|V|^{2}+C|S|^{2}+\frac{1}{10}e^{4\phi}|\nabla V|^{2}\notag\\
    &\leq C|\Phi|^{2}+\frac{1}{10}e^{4\phi}|\nabla V|^{2}\notag.
\end{align}
{\bf Step 7:} estimate $|S|^{2}$:

From (5.16) in \cite{GMPS26}, we can use "$\ast$" notation to write the evolution equation of $\mathrm{Rm}$
\begin{align}
    \frac{\partial}{\partial t}{\rm Rm}&=e^{4\phi}\ast(\Delta{\rm Rm}+{\rm Rm}\ast{\rm Rm}+\nabla{\rm Rm}\ast\mathbf{T}\ast\varphi+{\rm Rm}\ast\nabla\mathbf{T}\ast\varphi)\notag\\
    &\quad+e^{4\phi}\ast{\rm Rm}\ast \mathbf{T}\ast\mathbf{T}\ast\psi\notag\\
    &\quad+e^{4\phi}\ast(\nabla^{2}\mathbf{T}\ast \mathbf{T}+\nabla\mathbf{T}\ast \nabla\mathbf{T}+\nabla\mathbf{T}\ast \mathbf{T}\ast \mathbf{T}+\mathbf{T}\ast \mathbf{T}\ast\mathbf{T}\ast \mathbf{T})\notag\\
    &\quad \ast (1+\varphi+\psi)^{2}\notag
\end{align}
and 
\begin{align}\label{4.19}
    \Delta{\rm Rm}-\tilde{\Delta}\widetilde{\rm Rm}&=\nabla_ig^{ij}\nabla_j{\rm Rm}-\tilde\nabla_i\tilde g^{ij}\tilde\nabla_j\widetilde{\rm Rm}\\
    &=\nabla_ig^{ij}\nabla_j({\rm Rm}-\widetilde{\rm Rm})+\nabla_i(g^{ij}\nabla_j-\tilde g^{ij}\tilde{\nabla}_j)\widetilde{\rm Rm}\notag\\
    &\quad+(\nabla_i-\tilde{\nabla}_i)(\tilde g^{ij}\tilde\nabla_j{\rm Rm})\notag\\
    &=\Delta S+\nabla_i(g^{ij}\nabla_j\widetilde{\rm Rm}-\tilde g^{ij}\tilde\nabla_j\widetilde{\rm Rm})+A*\tilde{\nabla}\widetilde{\rm Rm}.\notag
\end{align}
Similar to the above steps, it follows that
\begin{align}
     \frac{\partial}{\partial t}S&=F\ast(\tilde{\Delta}\widetilde{{\rm Rm}}+\widetilde{{\rm Rm}}\ast \widetilde{{\rm Rm}}+\tilde{\nabla}\widetilde{{\rm Rm}}\ast\tilde{\mathbf{T}}\ast\tilde{\varphi}+\widetilde{{\rm Rm}}\ast\tilde{\nabla}\tilde{\mathbf{T}}\ast\tilde{\varphi})\notag\\
     &\quad +F\ast (\tilde{\nabla}^{2}\tilde{\mathbf{T}}\ast \tilde{\mathbf{T}}+\tilde{\nabla}\tilde{\mathbf{T}}\ast \tilde{\nabla}\tilde{\mathbf{T}}+\tilde{\nabla}\tilde{\mathbf{T}}\ast \tilde{\mathbf{T}}\ast \tilde{\mathbf{T}}+\tilde{\mathbf{T}}^{4})\ast(1+\tilde{\varphi}+\tilde{\psi})^{2}\notag\\
     &\quad  +F\ast\widetilde{{\rm Rm}}\ast \tilde{\mathbf{T}}\ast\tilde{\mathbf{T}}\ast\tilde{\psi}+e^{4{\phi}}\ast(\Delta{\rm Rm}+{\rm Rm}\ast{\rm Rm}-\tilde{\Delta}\widetilde{{\rm Rm}}-\widetilde{{\rm Rm}}\ast \widetilde{{\rm Rm}})\notag\\
     &\quad +e^{4{\phi}}\ast(\nabla{\rm Rm}\ast\mathbf{T}\ast\varphi+{\rm Rm}\ast\nabla\mathbf{T}\ast\varphi-\tilde{\nabla}\widetilde{{\rm Rm}}\ast\tilde{\mathbf{T}}\ast\tilde{\varphi}-\widetilde{{\rm Rm}}\ast\tilde{\nabla}\tilde{\mathbf{T}}\ast\tilde{\varphi})\notag\\
     &\quad +e^{4{\phi}}\ast({\rm Rm}\ast \mathbf{T}\ast\mathbf{T}\ast\psi-\widetilde{{\rm Rm}}\ast \tilde{\mathbf{T}}\ast\tilde{\mathbf{T}}\ast\tilde{\psi})\notag\\
     &\quad +e^{4{\phi}}\ast (\nabla^{2}\mathbf{T}\ast \mathbf{T}+\nabla\mathbf{T}\ast \nabla\mathbf{T}+\nabla\mathbf{T}\ast \mathbf{T}\ast \mathbf{T}+\mathbf{T}^{4})\ast(1+\varphi+\psi)^{2}\notag\\
     &\quad -e^{4{\phi}}\ast (\tilde{\nabla}^{2}\tilde{\mathbf{T}}\ast \tilde{\mathbf{T}}+\tilde{\nabla}\tilde{\mathbf{T}}\ast \tilde{\nabla}\tilde{\mathbf{T}}+\tilde{\nabla}\tilde{\mathbf{T}}\ast \tilde{\mathbf{T}}\ast \tilde{\mathbf{T}}+\tilde{\mathbf{T}}^{4})\ast(1+\tilde{\varphi}+\tilde{\psi})^{2}\notag
\end{align}
     which implies
\begin{align}
     \left|\frac{\partial}{\partial t}S\right|&\leq e^{4\phi}(\Delta S(t)+{\rm div}\mathfrak{S}(t)) +C|F|+C|h|+C|A|+C|U|+C|P|+C|Q|\notag\\
     &\quad+C|V|+C|S|+Ce^{4\phi}|\nabla V|+Ce^{4\phi}|\nabla S|\notag,
\end{align}
where $\mathfrak{S}$ is defined by 
$$\mathfrak{S}^{id}_{abc}:=(g^{ij}\nabla_j-\tilde{g}^{ij}\tilde{\nabla}_{j})\tilde R^d_{abc}.$$
Using Young's inequality, it follows that
\begin{align}
    \frac{\partial}{\partial t}|S|^{2}&\leq C|S|^{2}+2\langle S,\partial_{t}S\rangle\notag\\
    &\leq 2\langle S,e^{4\phi}(\Delta S(t)+{\rm div}\mathfrak{S}(t))\rangle+C|F|^{2}+C|h|^{2}+C|A|^{2}+C|U|^{2}\notag\\
    &\quad +C|P|^{2}+C|Q|^{2}+C|V|^{2}+C|S|^{2}+\frac{1}{10}e^{4\phi}|\nabla V|^{2}+\frac{1}{10}e^{4\phi}|\nabla S|^{2}\notag\\
    &\leq 2\langle S,e^{4\phi}(\Delta S(t)+{\rm div}\mathfrak{S}(t))\rangle+C|\Phi|^{2}+\frac{1}{10}e^{4\phi}|\nabla V|^{2}+\frac{1}{10}e^{4\phi}|\nabla S|^{2}\notag.
\end{align}
{\bf Step 8:} estimate $|V|^{2}$:

Recall the evolution equation of $\mathbf{T}$, then we have
\begin{align}
    \nabla(\partial_{t}\mathbf{T})&=e^{4\phi}\Delta\nabla \mathbf{T}+e^{4\phi}\ast(\nabla{\rm Rm}\ast\mathbf{T}+{\rm Rm}\ast\nabla\mathbf{T}+{\rm Rm}\ast\mathbf{T}\ast\mathbf{T})\ast(1+\varphi+\psi)\notag\\
    &\quad+e^{4\phi}\ast(\nabla^{2}\mathbf{T}\ast\mathbf{T}+\nabla\mathbf{T}\ast\nabla\mathbf{T}+\nabla\mathbf{T}\ast\mathbf{T}\ast\mathbf{T}+\mathbf{T}^{4})\ast(1+\varphi+\psi)\notag.
\end{align}
With the same discussion, we have
\begin{align}
    \frac{\partial}{\partial t}V&=\nabla(\partial_{t}\mathbf{T})+\mathbf{T}\ast\nabla\partial_{t}g-\tilde{\nabla}(\partial_{t}\tilde{\mathbf{T}})-\tilde{\mathbf{T}}\ast\tilde{\nabla}\partial_{t}\tilde{g}\notag\\
    &=F\ast(\tilde{\Delta}\tilde{\nabla} \tilde{\mathbf{T}}+(\tilde{\nabla}\widetilde{\rm Rm}\ast\tilde{\mathbf{T}}+\widetilde{\rm Rm}\ast\tilde{\nabla}\tilde{\mathbf{T}}+\widetilde{\rm Rm}\ast\tilde{\mathbf{T}}\ast\tilde{\mathbf{T}})\ast(1+\tilde{\varphi}+\tilde{\psi}))\notag\\
    &\quad +F\ast(\tilde{\nabla}^{2}\tilde{\mathbf{T}}\ast\tilde{\mathbf{T}}+\tilde{\nabla}\tilde{\mathbf{T}}\ast\tilde{\nabla}\tilde{\mathbf{T}}+\tilde{\nabla}\tilde{\mathbf{T}}\ast\tilde{\mathbf{T}}\ast\tilde{\mathbf{T}}+\tilde{\mathbf{T}}^{4})\ast(1+\tilde{\varphi}+\tilde{\psi})\notag\\
    &\quad +e^{4{\phi}}\ast(\Delta\nabla \mathbf{T}-\tilde{\Delta}\tilde{\nabla} \tilde{\mathbf{T}})+U\ast \nabla\partial_{t}g+\tilde{\mathbf{T}}\ast(\nabla\partial_{t}g-\tilde{\nabla}\partial_{t}{\tilde{g}})\notag\\
    &\quad +e^{4{\phi}}\ast(\nabla{\rm Rm}\ast\mathbf{T}+{\rm Rm}\ast\nabla\mathbf{T}+{\rm Rm}\ast\mathbf{T}\ast\mathbf{T})\ast(1+\varphi+\psi)\notag\\
    &\quad -e^{4{\phi}}\ast(\tilde{\nabla}\widetilde{\rm Rm}\ast\tilde{\mathbf{T}}+\widetilde{\rm Rm}\ast\tilde{\nabla}\tilde{\mathbf{T}}+\widetilde{\rm Rm}\ast\tilde{\mathbf{T}}\ast\tilde{\mathbf{T}})\ast(1+\tilde{\varphi}+\tilde{\psi})\notag\\
    &\quad +e^{4{\phi}}\ast(\nabla^{2}\mathbf{T}\ast\mathbf{T}+\nabla\mathbf{T}\ast\nabla\mathbf{T}+\nabla\mathbf{T}\ast\mathbf{T}\ast\mathbf{T}+\mathbf{T}^{4})\ast(1+\varphi+\psi)\notag\\
    &\quad -e^{4{\phi}}\ast(\tilde{\nabla}^{2}\tilde{\mathbf{T}}\ast\tilde{\mathbf{T}}+\tilde{\nabla}\tilde{\mathbf{T}}\ast\tilde{\nabla}\tilde{\mathbf{T}}+\tilde{\nabla}\tilde{\mathbf{T}}\ast\tilde{\mathbf{T}}\ast\tilde{\mathbf{T}}+\tilde{\mathbf{T}}^{4})\ast(1+\tilde{\varphi}+\tilde{\psi})\notag
\end{align}
    By calculating, we get
\begin{align}
    \left|\frac{\partial}{\partial t}V\right|&\leq e^{4\phi}(\Delta V(t)+{\rm div}\mathfrak{V}(t))+C|F|+C|h|+C|A|+C|U|+C|P|+C|Q|\notag\\
     &\quad+C|V|+C|S|+Ce^{4{\phi}}|\nabla V|+Ce^{4{\phi}}|\nabla S|\notag,
\end{align}
where $\mathfrak{V}$ is defined by 
$$\mathfrak{V}^{i}_{abc}:=(g^{ij}\nabla_j-\tilde{g}^{ij}\tilde{\nabla}_{j})\tilde \nabla_a \tilde{\mathbf{T}}_{bc}.$$
Hence we get
\begin{align}
    \frac{\partial}{\partial t}|V|^{2}&\leq C|V|^{2}+2\langle V,\partial_{t}V\rangle\notag\\
    &\leq 2\langle V,e^{4\phi}(\Delta V(t)+{\rm \rm div}\mathfrak{V}(t))\rangle+C|F|^{2}+C|h|^{2}+C|A|^{2}+C|U|^{2}\notag\\
    &\quad +C|P|^{2}+C|Q|^{2}+C|V|^{2}+C|S|^{2}+\frac{1}{10}e^{4{\phi}}|\nabla V|^{2}+\frac{1}{10}e^{4{\phi}}|\nabla S|^{2}\notag\\
    &\leq 2\langle V,e^{4\phi}(\Delta V(t)+{\rm div}\mathfrak{V}(t))\rangle+C|\Phi|^{2}+\frac{1}{10}e^{4{\phi}}|\nabla V|^{2}+\frac{1}{10}e^{4{\phi}}|\nabla S|^{2}\notag.
\end{align}

From the estimate in Step 1 to Step 8, we complete this lemma.
\end{proof}

\begin{theorem}
    Suppose $(\psi(t),\phi(t)),(\tilde{\psi}(t),\tilde{\phi}(t))$ are  two solutions to the heterotic $G_{2}$ flow $\eqref{heterotic flow}$ on a compact manifold $M$ for $t\in[0,\epsilon],\epsilon>0$. If 
    $$(\psi(t),\phi(t))=(\tilde{\psi}(t),\tilde{\phi}(t))$$
    for some $t\in[0,\epsilon]$, then $(\psi(s),\phi(s))=(\tilde{\psi}(s),\tilde{\phi}(s))$ for all $s\in[t,\epsilon]$.
\end{theorem}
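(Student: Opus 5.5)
Set $t_0$ to be the time at which the two solutions agree. Using the energy $\mathcal{W}(t)$ from \eqref{Wfunc}, I will show $\mathcal{W}(t)\le e^{C(t-t_0)}\mathcal{W}(t_0)=0$ for $t\in[t_0,\epsilon]$. First, $\mathcal{W}(t_0)=0$. Indeed, $\psi=\tilde\psi$ at $t_0$ forces $\varphi=\tilde\varphi$ and $g=\tilde g$, since the $4$-form together with the orientation determines the $G_2$-structure. Then the Levi-Civita connections, torsions, $\nabla\mathbf{T}$ and curvatures coincide, and $\phi=\tilde\phi$ gives $F=0$. Hence $\Phi\equiv0$ at $t_0$.

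Next I differentiate $\mathcal{W}$. The weight and volume factors contribute $\partial_t(e^{-4\phi}\mathrm{dVol}_{g})=e^{-4\phi}(-4\partial_t\phi+\tfrac12\mathrm{tr}_g\partial_tg)\mathrm{dVol}_g$. By \eqref{uniformbdd} and the flow equations this is bounded by $C e^{-4\phi}\mathrm{dVol}_g$, so it only produces $C\mathcal{W}$. For the integrand I insert Lemma \ref{lemma5.2}; the factor $e^{-4\phi}$ cancels the $e^{4\phi}$ in front of the principal terms. This gives
\begin{align*}
\mathcal{W}'(t)\le \int_M\Big(2\langle V,\Delta V+{\rm div}\mathfrak{V}\rangle+2\langle S,\Delta S+{\rm div}\mathfrak{S}\rangle+|\nabla V|^2+|\nabla S|^2\Big)\mathrm{dVol}_g+C\mathcal{W}.
\end{align*}
The principal terms become free of the dilaton weight precisely because the weight $e^{-4\phi}$ in $\mathcal{W}$ was chosen to match the factor in front of the Laplacian. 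That makes the integration by parts exact, with no gradient-of-weight terms.

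Integrating by parts on the closed manifold gives $\int 2\langle V,\Delta V\rangle=-2\int|\nabla V|^2$ and $\int 2\langle V,{\rm div}\mathfrak{V}\rangle=-2\int\langle\nabla V,\mathfrak{V}\rangle\le \tfrac12\int|\nabla V|^2+2\int|\mathfrak{V}|^2$. The same holds for $S$. By the bounds $|\mathfrak{V}|+|\mathfrak{S}|\le C(|h|+|A|)\le C|\Phi|$ from Lemma \ref{lemma5.2}, the error is bounded by $C\int|\Phi|^2\le C\mathcal{W}$, using that $e^{-4\phi}$ is bounded above and below since $|\phi|\le C_0$. The gradient terms then total $(-2+\tfrac12+1)\int(|\nabla V|^2+|\nabla S|^2)\le0$, so $\mathcal{W}'\le C\mathcal{W}$. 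Gronwall then gives $\mathcal{W}\equiv0$ on $[t_0,\epsilon]$. Hence $P=0$ and $F=0$, i.e. $\psi=\tilde\psi$ and $e^{4\phi}=e^{4\tilde\phi}$, so $\phi=\tilde\phi$.

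The main obstacle is the bookkeeping of the gradient terms. Lemma \ref{lemma5.2} already contains $+e^{4\phi}|\nabla V|^2+e^{4\phi}|\nabla S|^2$, and the divergence terms cost another $\tfrac12$ each. I must check that the $-2$ from the Laplacian absorbs both, which it does with room to spare. A secondary point is that the inner products and the divergence are taken with respect to $g(t)$, while $\Delta$ is the $g$-Laplacian, so the integration by parts is legitimate with respect to $\mathrm{dVol}_{g(t)}$. Everything else has already been reduced to $C|\Phi|^2$ in Lemma \ref{lemma5.2}.
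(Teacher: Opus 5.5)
Your proposal is correct and follows essentially the same route as the paper. Both use the weighted energy $\mathcal{W}$ and Lemma \ref{lemma5.2}, with the weight $e^{-4\phi}$ cancelling the $e^{4\phi}$ in front of the Laplacian; both then integrate by parts, absorb the $\mathfrak{V},\mathfrak{S}$ terms, and apply Gronwall starting from $\mathcal{W}(t_0)=0$. Two steps the paper leaves implicit are handled explicitly in your version: the gradient bookkeeping via Young's inequality (net coefficient $1-2+\tfrac12<0$), and the justification that $\Phi\equiv 0$ at $t_0$, since with a fixed orientation $\psi$ determines $\varphi$.
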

\begin{proof}
    Under the curvature and torsion bounds (\ref{uniformbdd}) and Shi-type estimates, the evolution equations of the metric $g(t)$, volume form $\mathrm{dVol}_{g(t)}$ and conformal factor $\phi(t)$ imply
\begin{align}
    \left|\frac{\partial}{\partial t}g(t)\right|_{g(t)}\leq C,~ \left|\frac{\partial}{\partial t}{\rm Vol}_{g(t)}\right|_{g(t)}\leq C,~\left|\frac{\partial}{\partial t}\phi(t)\right|_{g(t)}\leq C.
\end{align}
Using integration by parts, we get
\begin{align}
    \frac{\rm d}{{\rm d} t}\mathcal{W}(t)&\leq C \mathcal{W}(t)+\int_Me^{-4\phi}\frac{\partial}{\partial t}|\Phi(t)|^{2}{\rm d}{\rm Vol}_{g(t)}\notag\\
    &\leq C\mathcal{W}(t)+\int_M|\nabla V|^{2}+|\nabla S|^{2}{\rm dVol}_{g(t)}\notag\\
    &\quad+\int_M2\left(\left\langle S(t),\Delta S(t)+{\rm div}\mathfrak{S}(t)\right\rangle+\left\langle V(t),\Delta V(t)+{\rm div}\mathfrak{V}(t)\right\rangle\right){\rm dVol}_{g(t)}\notag\\
    &\leq C\mathcal{W}(t)+\int_M|\nabla V|^{2}+|\nabla S|^{2}{\rm dVol}_{g(t)}-2\int_M|\nabla V|^{2}+|\nabla S|^{2}{\rm dVol}_{g(t)}\notag\\
    &\quad-\int_M2\left(\left\langle\nabla S(t),\mathfrak{S}(t)\right\rangle+\left\langle\nabla V(t),\mathfrak{V}(t)\right\rangle\right){\rm dVol}_{g(t)}\notag\\
    &\leq C\mathcal{W}(t)\notag,
\end{align}
Suppose that $\psi(s)=\tilde\psi(s)$ and $\phi(s)=\tilde{\phi}(s)$ 
for some $s\in[0,\epsilon]$. Then $\mathcal{W}(s)=0$. Hence, for 
$t\in[s,\epsilon]$, Gronwall's inequality yields
\begin{equation}
    \mathcal{W}(t)\leq e^{C(t-s)}\mathcal{W}(s)=0,
\end{equation}
which implies that $\psi(t)=\tilde\psi(t)$ and 
$\phi(t)=\tilde{\phi}(t)$ for all $t\in[s,\epsilon]$, as required.
\end{proof}

\subsection{Backward uniqueness}
To complete the proof of uniqueness, it remains to establish the 
backward uniqueness of the heterotic $G_2$ flow.
For this purpose, we invoke a general backward  uniqueness theorem \cite{Kot10} for 
time-dependent sections of vector bundles satisfying certain  differential inequalities. Since we only consider compact manifolds, we state \cite{Kot10} here in this setting.

We define the operators
$$
\square := e^{4\phi}g^{i j} \nabla_i \nabla_j=e^{4\phi}\Delta \quad \text { and } \quad L := \frac{\partial}{\partial t}-\square.
$$
\begin{theorem}\label{thm5.4}
  Let $\mathcal{X}$ and $\mathcal{Y}$ be finite direct sums of the bundles $T_l^k(M)$, and let $X \in C^{\infty}(\mathcal{X} \times[0, \epsilon])$, $Y \in C^{\infty}(\mathcal{Y} \times[0, \epsilon])$ be smooth families of sections. Assume that there exist positive constants $C, \alpha_1$, and $\alpha_2$ such that
\begin{align}
\left|\frac{\partial}{\partial t}g\right|_{g(t)}^2+\left|\nabla \frac{\partial}{\partial t}g\right|_{g(t)}^2 \leq C, \quad\left|\frac{\partial}{\partial t} (e^{4\phi}g^{-1})\right|_{g(t)}^2+|\nabla( e^{4\phi}g^{-1})|_{g(t)}^2 \leq C,\notag
\end{align}
and 
$$
\alpha_1 g^{i j}(x, t) \leq e^{4\phi}g^{i j}(x, t) \leq \alpha_2 g^{i j}(x, t), 
$$
on $M \times[0, \epsilon]$ and that the metrics $g(t)$ are complete and satisfy
$$
\operatorname{Ric}(g(t)) \geq-K g(t) ,
$$
for some $K \geq 0$. We assume the sections $X, Y$ obey the following growth condition
$$
|X(x, t)|_{g(t)}^2+|(\nabla X)(x, t)|_{g(t)}^2+|Y(x, t)|_{g(t)}^2 \leq A e^{a d_{g(t)}\left(x_0, x\right)},
$$
for some $a, A>0$ and a fixed $x_0 \in M$, as well as the system of differential inequalities
\begin{align}
|L X|_{g(t)}^2 & \leq C\left(|X|_{g(t)}^2+|\nabla X|_{g(t)}^2+|Y|_{g(t)}^2\right)\label{LX}\\
\left|\frac{\partial Y}{\partial t}\right|_{g(t)}^2 & \leq C\left(|X|_{g(t)}^2+|\nabla X|_{g(t)}^2+|Y|_{g(t)}^2\right)\label{Yt}
\end{align}
for some $C \geq 0$. If $X(\cdot, \epsilon) \equiv 0$ and $Y(\cdot, \epsilon) \equiv 0$, then $X \equiv 0$ and $Y \equiv 0$ on $M \times[0, \epsilon]$.
\end{theorem}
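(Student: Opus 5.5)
The statement is a version of Kotschwar's backward uniqueness theorem \cite{Kot10}, with $\Delta$ replaced by the operator $\square=a^{ij}\nabla_i\nabla_j$, where $a^{ij}:=e^{4\phi}g^{ij}$. I would therefore follow the Carleman-estimate strategy of \cite{Kot10} and check at each point that replacing $g^{ij}$ by $a^{ij}$ only costs lower-order terms. Those terms are controlled by the hypotheses $\alpha_1 g^{-1}\le a\le\alpha_2 g^{-1}$, $|\partial_t a|+|\nabla a|\le C$ and $|\partial_t g|+|\nabla\partial_t g|\le C$.

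\textbf{Setup.} I would reverse time, $\tau=\epsilon-t$, so that $X,Y$ vanish at $\tau=0$ and $X$ satisfies a backward-parabolic inequality $|(\partial_\tau+\square)X|^2\le C(|X|^2+|\nabla X|^2+|Y|^2)$. Since $|\partial_t g|\le C$, the metrics $g(t)$ are uniformly equivalent on $[0,\epsilon]$. Hence $d_{g(t)}$ and $d_{g(0)}$ are comparable, and the growth hypothesis can be rewritten with respect to the single distance $r=d_{g(0)}(x_0,\cdot)$. From $\mathrm{Ric}\ge -Kg$ and Laplacian comparison, I would build a smoothed distance $\rho$ with $|\nabla\rho|\le C$ and $\nabla^2\rho\le C(1+\rho^{-1})g$ in the barrier sense. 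The factors $\alpha_1,\alpha_2$ and $|\nabla a|$ then give the matching bound on $\square\rho$.

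\textbf{Two Carleman inequalities.} Following \cite{Kot10}, I would take a weight of the form $G=\exp\big(-2\alpha\,\sigma(\tau)-\rho^2/(C\tau)\big)$ on a short interval $(0,\tau_0]$, with $\sigma$ chosen so that $\sigma'\sim\tau^{-1}$ up to logarithmic factors.
\begin{enumerate}
\item For $X$, the estimate is obtained by expanding $\int|(\partial_\tau+\square)X|^2 G$ and integrating by parts. The gain is a term $\alpha\int\tau^{-1}(|\nabla X|^2+\tau^{-1}|X|^2)G$.
\item For $Y$, which only satisfies an ODE-type inequality, a simpler weighted estimate for $\int|\partial_\tau Y|^2G$ gives $\alpha\int\tau^{-2}|Y|^2G$.
\end{enumerate}
Integrating by parts with $\square$ instead of $\Delta$ produces extra terms in which derivatives fall on $a^{ij}$ or on the volume form, namely $\nabla a$, $\partial_\tau a$ and $\partial_\tau g$. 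Each of these is bounded by $C\int(|\nabla X|^2+|X|^2)G$ or by $C\int|\nabla X|^2|\nabla\rho|^2\tau^{-1}G$, so all of them are absorbed into the gain once $\alpha$ is large and $\tau_0$ is small. Noncompactness is handled with a cutoff $\eta(\rho/R)$. The cutoff errors are supported where $\rho\sim R$; there the Gaussian factor $e^{-\rho^2/(C\tau)}$ beats the $Ae^{a\rho}$ growth, so these errors tend to $0$ as $R\to\infty$. The same growth bound together with $X(\cdot,0)=Y(\cdot,0)=0$ removes the boundary terms at $\tau=0$.

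\textbf{Conclusion.} I would add the two Carleman inequalities and insert \eqref{LX}--\eqref{Yt}. The right-hand side is then $C\int(|X|^2+|\nabla X|^2+|Y|^2)G$, which is dominated by the $\alpha$-weighted left-hand sides for $\alpha\gg C$. This leaves an inequality of the form $\alpha\int_{\tau\le\tau_1}(\cdots)G\le C e^{-2\alpha\sigma(\tau_1)}(\cdots)$. Because $G$ on $\{\tau\le\tau_1\}$ is larger than $e^{-2\alpha\sigma(\tau_1)}$ by a factor that grows with $\alpha$, letting $\alpha\to\infty$ forces $X=Y=0$ on $M\times[\epsilon-\tau_1,\epsilon]$. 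Since all constants are uniform in time, this step can be repeated finitely many times to reach $t=0$.

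The main obstacle is the first Carleman inequality for the variable-coefficient operator $a^{ij}\nabla_i\nabla_j$. One has to check that after integration by parts the Hessian of the weight appears contracted against $a^{ij}$ with the correct sign. The bounds $\alpha_1 g^{-1}\le a\le \alpha_2 g^{-1}$ and $\mathrm{Ric}\ge-K$ must make this Hessian term positive enough to absorb the $\nabla a$ commutators. My first attempt would be to work throughout in the metric $\bar g_{ij}=e^{-4\phi}g_{ij}$, for which $\square$ becomes the Laplacian plus a first-order term, and then apply Kotschwar's computation essentially verbatim.
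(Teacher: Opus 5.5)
The paper gives no proof of this statement. It quotes Kotschwar's theorem \cite{Kot10}, specialised to $a^{ij}=e^{4\phi}g^{ij}$ and, as the text says, to compact $M$. Re-deriving it from Kotschwar's Carleman method is the right idea, but the sketch you give has two concrete failures, and the key inequality is left unproved.

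\textbf{The ODE inequality fails for your weight.} Since $e^{-\rho^2/(C\tau)}$ increases in $\tau$, we have $\partial_\tau\log G=-2\alpha\sigma'+\rho^2/(C\tau^2)$, which changes sign near $\rho^2\approx 2C\alpha\tau$. At such points $G$ has an interior maximum in $\tau$. Take $Y$ concentrated there, with $\partial_\tau Y$ supported where $G$ is much smaller. Such a $Y$ violates $\alpha\int\!\!\int\tau^{-2}|Y|^2G\le C'\int\!\!\int|\partial_\tau Y|^2G$ for every fixed $C'$. In the integration by parts this shows up as the term $\int\!\!\int C^{-1}\rho^2\tau^{-3}|Y|^2G$ on the wrong side. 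The ODE estimate needs $-\partial_\tau\log G\ge c\alpha/\tau$ everywhere.

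\textbf{The PDE inequality needs control you do not have.} A spatially varying weight $e^{-f}$ does not enter only through the trace $a^{ij}\nabla_i\nabla_jf$. Commuting $a^{ij}\nabla_i\nabla_j$ with $a(\nabla f,\nabla\,\cdot)$ produces three kinds of terms:
\begin{itemize}
\item the full quadratic form $\nabla^2f(\nabla X,\nabla X)$;
\item for tensor-valued $X$, terms $\mathrm{Rm}\ast\nabla f$ and $\nabla\mathrm{Rm}\ast\nabla f$;
\item terms $\nabla a\ast\nabla f$, which for $f=\rho^2/(C\tau)$ give errors of size $\rho\,\tau^{-1}|\nabla X|^2G$, not of either form you list.
\end{itemize}
The hypothesis $\mathrm{Ric}\ge-Kg$ controls none of these. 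Laplacian comparison bounds only $\Delta\rho$ from above. Even the one-sided bound $\nabla^2\rho\le C(1+\rho^{-1})g$ that you assert requires a lower sectional curvature bound. So the variable-coefficient Carleman inequality, which you yourself single out as the main obstacle, is not established by your sketch.

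\textbf{Routes that do work.} The listed hypotheses fit an argument that never differentiates the weight twice in space. Write $a^{ij}\nabla_i\nabla_jX=\nabla_i(a^{ij}\nabla_jX)-(\nabla_ia^{ij})\nabla_jX$. The second piece is harmless, because the right-hand side of the $LX$ inequality already contains $|\nabla X|^2$. Integrating the cross term $\langle\nabla_i(a^{ij}\nabla_jW),\partial_\tau W\rangle$ by parts in space and time uses exactly the ellipticity $\alpha_1g^{-1}\le a\le\alpha_2g^{-1}$ and the bounds on $\partial_ta$, $\nabla a$, $\partial_tg$, $\nabla\partial_tg$. On compact $M$, a purely temporal weight $\sigma(\tau)^{-2\alpha}$ then serves both the PDE and the ODE inequality, with no cutoff and no Gaussian.

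Your conformal fallback also works in that setting. With $\bar g=e^{-4\phi}g$ one has $a^{ij}\nabla_i\nabla_jX-\Delta_{\bar g}X=e^{4\phi}\big(d\phi\ast\nabla X+(\nabla^2\phi+d\phi\ast d\phi)\ast X\big)$, so there is a zeroth-order term as well as a first-order one. Since the $g$- and $\bar g$-norms are comparable, the case $a=\bar g^{-1}$ of the theorem applies to $\bar g(t)$. However, this needs bounds on $\nabla^2\phi$, on $\mathrm{Ric}(\bar g)$, and on $\bar\nabla\partial_t\bar g$ (hence on $\nabla\partial_t\phi$). None of these follows from the stated hypotheses. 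They hold only because $M\times[0,\epsilon]$ is compact and the data are smooth, and you should say so explicitly.
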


Suppose $\psi(s)=\tilde{\psi}(s)$ and $\phi(s)=\tilde{\phi}(s)$ for some $s\in[0,\epsilon]$. We let
\begin{align}
    X(t)&=U(t)\oplus V(t)\oplus D(t)\oplus S(t)\oplus G(t),\notag\\
    Y(t)&=P(t)\oplus Q(t)\oplus h(t)\oplus A(t)\oplus B(t)\oplus F(t),\notag
\end{align}
where $U(t),V(t),S(t),P(t),Q(t),h(t),A(t),F(t)$ are defined as before and 
\begin{align}
    B=\nabla A,\quad D=\nabla^2\mathbf{T}-\tilde\nabla^2\tilde{\mathbf{T}},\quad G=\nabla{\rm Rm}-\tilde\nabla\widetilde{{\rm Rm}}.\label{BGD}
\end{align}
Then 
\begin{align}
    X(t)&\in T_2(M)\oplus T_3(M)\oplus T_4(M)\oplus T^1_3(M)\oplus T^1_4(M),\label{X(t)}\\
    Y(t)&\in T_4(M)\oplus T_3(M)\oplus T_2(M)\oplus T^1_2(M)\oplus T^1_3(M)\oplus T_0(M)\label{Y(t)}
\end{align}
We show that $X(t)$ and $Y(t)$ defined above satisfy the system of differential inequalities \eqref{LX}-\eqref{Yt}. We start with the following lemma.
\begin{lemma}
    The quantities $U(t),V(t),S(t),P(t),Q(t),h(t),A(t),F(t),B(t),D(t)$,$G(t)$ defined above are uniformly bounded with respect to $g(t)$ on $M\times[0,\epsilon]$.
\end{lemma}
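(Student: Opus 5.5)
The plan is to reduce everything to the uniform bounds \eqref{uniformbdd} for each of the two fixed smooth solutions, together with the uniform equivalence of $g(t)$ and $\tilde g(t)$ on $M\times[0,\epsilon]$ already recorded at the start of this section, and the preceding lemma that $\tilde g^{-1}$, $\tilde\nabla^k\widetilde{\mathrm{Rm}}$ and $\tilde\nabla^k\tilde{\mathbf{T}}$ are bounded with respect to $g(t)$. Each quantity in the list is a difference of a tensor built from the first solution and the corresponding tensor built from the second. By the triangle inequality it suffices to bound each piece separately in $|\cdot|_{g(t)}$. The pieces from the first solution are bounded directly by \eqref{uniformbdd}. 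The pieces from the second solution are bounded in $|\cdot|_{\tilde g(t)}$ by \eqref{uniformbdd}, and then in $|\cdot|_{g(t)}$ by norm comparability, with constants depending only on tensor type.

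Concretely, I would proceed term by term. For $P,Q$: $|\varphi|_{g}^2=7$ and $|\psi|_g^2=7$ pointwise (up to normalization), and similarly for $\tilde\varphi,\tilde\psi$ with $\tilde g$, so both are bounded. For $h$: $|g|_g=\sqrt7$ and $|\tilde g|_g\le C$ by equivalence. For $F$: $|\phi|,|\tilde\phi|\le C_0$ gives $|F|\le 2e^{4C_0}$. For $U,V,D$: use $|\mathbf{T}|$, $|\nabla\mathbf{T}|$, $|\nabla^2\mathbf{T}|$ and their tilde versions from \eqref{uniformbdd}. Here $|\mathbf{T}|$ itself is bounded by smoothness on the compact cylinder, or via \eqref{torsionnorm}. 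For $S,G$: use the $k=0,1$ cases of \eqref{uniformbdd}.

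The only terms needing slightly more care are $A=\nabla-\tilde\nabla$ and $B=\nabla A$, since they are not differences of tensors that are individually bounded. For $A$, I would write $A^k_{ij}=\tfrac12\tilde g^{kl}(\nabla_i\tilde g_{jl}+\nabla_j\tilde g_{il}-\nabla_l\tilde g_{ij})$, the standard formula for the difference of Levi-Civita connections. This reduces $A$ to bounding $\nabla\tilde g=\nabla(\tilde g-g)=\nabla h$ in $g(t)$. To control $\nabla h$, I would differentiate in time: $\partial_t(\nabla\tilde g)$ involves $\nabla\partial_t\tilde g$ and $(\partial_t\Gamma)\ast\tilde g$. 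Since $\partial_t\tilde g=\tilde\nabla\tilde{\mathbf T}$-type terms plus lower order, this becomes $\tilde\nabla(\partial_t\tilde g)+A\ast\partial_t\tilde g$, giving a linear ODE inequality $|\partial_t A|\le C+C|A|$. Its initial value is bounded because both solutions are smooth at $t=0$.

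Simpler still, and what I would actually use: both families $g(t),\tilde g(t)$ are smooth on the compact set $M\times[0,\epsilon]$. Hence $\Gamma-\tilde\Gamma$ and $\nabla(\Gamma-\tilde\Gamma)$ are smooth tensors on a compact set, and their $g(t)$-norms are continuous functions there, hence bounded. The same compactness argument in fact covers every item in the list.

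The main obstacle is not mathematical difficulty but making sure the constants are uniform in $t$ up to the endpoint. This requires smoothness on the closed interval $[0,\epsilon]$, which is part of the hypothesis. I would state the compactness argument as the core, and mention the quantitative bounds above as giving explicit dependence.
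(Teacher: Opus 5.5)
Your proof is correct. For $U,V,S,h,F,D,G$ it is the same as the paper's: triangle inequality, \eqref{uniformbdd} for each solution, and uniform equivalence of $g(t)$ and $\tilde g(t)$. You diverge on $P,Q,A,B$.

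The paper does not bound these directly. It uses the hypothesis that the two solutions coincide at some time $s$, so $P(s)=Q(s)=0$ and $A(s)=B(s)=0$. It then integrates from $s$ the inequalities \eqref{P}--\eqref{Q}, the inequality for $\partial_t|A|^2$ (after bounding $|\nabla S|\le C(|G|+|A|)$ and $|\nabla V|\le C(|D|+|A|)$), and \eqref{nabla A}. Closing with Gronwall gives $|P|^2+|Q|^2\le C\epsilon$, $|A|^2\le C\epsilon$ and $|B|\le C\epsilon$.

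You instead bound $P,Q$ pointwise via $|\varphi|_g^2=|\psi|_g^2=7$ and norm comparability. You bound $A,B$ by noting they are smooth tensor fields on the compact cylinder $M\times[0,\epsilon]$ and $g(t)$ is a smooth family, so their $g(t)$-norms are continuous on a compact set.

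Your route is shorter and never uses the coincidence at time $s$. The same compactness remark also bounds $\nabla X$, which the growth hypothesis of Theorem \ref{thm5.4} requires and the lemma does not list. On a compact manifold that hypothesis needs only boundedness, so nothing downstream is lost. The paper's route yields the extra information that $P,Q,A$ are $O(\sqrt{\epsilon})$ and $B$ is $O(\epsilon)$, but the statement does not need this. Its constants rest on the same compactness through \eqref{uniformbdd}, so it is not more general.

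Two harmless slips in your write-up:
\begin{itemize}
\item With $A=\nabla-\tilde\nabla$, your displayed formula has the wrong sign: it equals $\tilde\Gamma-\Gamma$.
\item \eqref{torsionnorm} by itself does not bound $|\mathbf{T}|$, since it needs bounds on $\tau_0$, $d\phi$ and $\tau_3$. The smoothness argument is what actually gives the bound.
\end{itemize}
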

\begin{proof}
    We have argued that two metrics $g(t)$ and $\tilde g(t)$ are uniformly equivalent on $M\times[0,\epsilon]$. We immediately deduce that $|h(t)|_{g(t)}=|g(t)-\tilde{g}(t)|_{g(t)}$ is bounded. From \eqref{uniformbdd} and the uniform equivalence of $g(t)$ and $\tilde{g}(t)$, we further have
\begin{align*}
|U|_{g(t)}&=| \mathbf{T}- \tilde{\mathbf{T}}|_{g(t)},\quad|V|_{g(t)}=|\nabla \mathbf{T}-\tilde{\nabla} \tilde{\mathbf{T}}|_{g(t)},\quad|D|_{g(t)}=|\nabla^2 \mathbf{T}-\tilde{\nabla}^2 \tilde{\mathbf{T}}|_{g(t)}, \\
|F|_{g(t)}&=|\phi-\tilde{\phi}|_{g(t)} ,\quad|S|_{g(t)}=|{\rm Rm}-\widetilde{\rm Rm}|_{g(t)}, \quad|G|_{g(t)}=|\nabla{\rm Rm}-\tilde{\nabla} \widetilde{\rm Rm}|_{g(t)}
\end{align*}
are bounded on $M \times[0, \epsilon]$. Since $\psi(s)=\tilde\psi(s)$ for some $s\in[0,\epsilon]$, using $s-t\leq\epsilon$ and (\ref{P})-(\ref{Q}), we have
\begin{align}
    \left|P(t) \right|^2+ \left|Q(t) \right|^2&=\left|P(t) \right|^2-\left|P(s) \right|^2+ \left|Q(t) \right|^2- \left|Q(s) \right|^2\notag\\
    &=\int_s^t\frac{\partial}{\partial \tau}(\left|P(\tau) \right|^2_{g(\tau)}+ \left|Q(\tau) \right|^2_{g(\tau)}){\rm d}\tau\notag\\
    &\leq C\int_s^t (|F(\tau)|^{2}+|h(\tau)|^{2}+|U(\tau)|^{2}+|P(\tau)|^{2}\notag\\
    &\qquad\quad+|Q(\tau)|^{2}+|V(\tau)|^{2}+|S(\tau)|^{2}){\rm d}\tau,\notag\\
    &\leq C\cdot\epsilon+C\int_t^s(|Q(\tau)|^2 + |P(\tau)|^2){\rm d}\tau.\notag
\end{align}
Using Gronwall's inequality, we have
\begin{equation}
    \left|P(t) \right|^2_{g(t)}+ \left|Q(t) \right|^2_{g(t)}\leq C\cdot\epsilon e^{-C(s-t)}\leq C\cdot \epsilon.
\end{equation}
Therefore $\left|P(t) \right|_{g(t)},\left|Q(t) \right|_{g(t)}$ are all bounded.

For $|A|^{2}$, recall that
\begin{align*}
\frac{\partial}{\partial t}|A|^{2}&\leq C|F|^{2}+C|S|^{2}+C|h|^{2}+C|U|^{2}+C|P|^{2}+C|Q|^{2}\notag\\
    &\quad+C|V|^{2}+C|A|^{2}+C|\nabla S|^{2}+C|\nabla V|^{2}.\notag
\end{align*}
For $|\nabla S|$ and $|\nabla V|$, we have the following estimates
\begin{align}
    |\nabla S|&=|\nabla{\rm Rm}-\nabla\widetilde{\rm Rm}|\notag\\
    &\leq |\nabla{\rm Rm}-\tilde{\nabla}\widetilde{\rm Rm}|+|\tilde{\nabla}\widetilde{\rm Rm}-\nabla\widetilde{\rm Rm}|\notag\\
    &\leq C|G|+C|A|,\notag
\end{align}
and
\begin{align}
    |\nabla V|&=|\nabla^2{\mathbf{T}}-\nabla\tilde{\nabla}\tilde{\mathbf{T}}|\notag\\
    &\leq |\nabla^2\mathbf{T}-\tilde{\nabla}^2\tilde{\mathbf{T}}|+|\tilde{\nabla}^2\tilde{\mathbf{T}}-\nabla\tilde{\nabla}\tilde{\mathbf{T}}|\notag\\
    &\leq C|D|+C|A|.\notag
\end{align}
Since $A(s)=0$, which yields
\begin{align}
    \left|A(t)\right|^2&=\left|A(t)\right|^2-\left|A(s)\right|^2=\int_s^t\frac{\partial}{\partial \tau}\left|A(\tau)\right|^2{\rm d}\tau\notag\\
    &\leq C\cdot\epsilon+C\int_t^s\left|A(\tau)\right|^2{\rm d}\tau\notag.
\end{align}
Using Gronwall's inequality again, we have
\begin{equation}
    \left|A(t) \right|^2_{g(t)}\leq C\cdot\epsilon e^{-C(s-t)}\leq C\cdot \epsilon. 
\end{equation}
Before we estimate $B$, we calculate the following
\begin{align*}
    \nabla\partial_{t}A&=g^{-1}\ast \nabla^{2}(\partial_{t}g)-\tilde{g}^{-1}\ast \nabla\tilde{\nabla}(\partial_{t}\tilde{g})-\nabla\tilde{g}^{-1}\ast \tilde{\nabla}(\partial_{t}\tilde{g})\\
    &=(g^{-1}-\tilde{g}^{-1})\ast \nabla^{2}(\partial_{t}g)+\tilde{g}^{-1}\ast (\nabla \nabla (\partial_{t}g)-\tilde{\nabla}\tilde{\nabla}(\partial_{t}\tilde{g}))\notag\\
    &\quad +A\ast\tilde{g}^{-1}\ast \tilde{\nabla}(\partial_{t}\tilde{g})\notag\\
    &= \tilde{g}^{-1}\ast h\ast \nabla^{2}(\partial_{t}g)+A\ast\tilde{g}^{-1}\ast \tilde{\nabla}(\partial_{t}\tilde{g})\notag\\
    &\quad +{\tilde{g}}^{-1}\ast \nabla(e^{4\phi}\ast\mathbf{T}\ast\varphi\ast({\rm Ric}+ \nabla^{2}\phi+E))\notag\\
    &\quad -{\tilde{g}}^{-1}\ast \tilde{\nabla}(e^{4\tilde{\phi}}\ast\tilde{\mathbf{T}}\ast\tilde{\varphi}\ast (\widetilde{{\rm Ric}}+ \tilde{\nabla}^{2}\tilde{\phi}+\tilde{E}))\notag\\
    &\quad +{\tilde{g}}^{-1}\ast\nabla( e^{4\phi}\ast\nabla({\rm Ric}+ \nabla^{2}\phi+E))\notag\\
    &\quad-{\tilde{g}}^{-1}\ast \tilde{\nabla}(e^{4\tilde{\phi}}\ast\tilde{\nabla} (\widetilde{{\rm Ric}}+ \tilde{\nabla}^{2}\tilde{\phi}+\tilde{E})).\notag
\end{align*}
Therefore we obtain the following estimate
\begin{align}\label{nabla A}
    |\nabla\partial_{t}A|&\leq C|F|+C|S|+C|h|+C|U|+C|P|+C|Q|+C|V|+C|A|\\
    &\quad+C|D|+C|G|+C|\nabla D|+C|\nabla G|\notag\\
    &\leq C|\Phi|+C|D|+C|G|+C|\nabla D|+C|\nabla G|.\notag
\end{align}
Since the norm of $\partial_t g$ is bounded, we have
\begin{align}
    |A\ast \partial_{t}g|\leq C|A|\notag.
\end{align}
Then we get $|B|$ satisfies
\begin{align}
    \left|B(t)\right|&=\left|\nabla A(t)\right|=\left|\nabla A(t)-\nabla A(s)\right|=\left|\int_s^t\frac{\partial}{\partial \tau}\nabla A(\tau){\rm d }\tau\right|\notag\\
    &\leq\int_t^s\left(\left| \nabla \partial_{\tau}A(\tau)\right|+\left|A(\tau)\ast \nabla\partial_{\tau}g(\tau)\right|\right){\rm d}\tau\notag\\
    &\leq C \int_t^s  |\Phi(\tau)|+|D(\tau)|+|G(\tau)|+|\nabla D(\tau)|+|\nabla G(\tau)|{\rm d}\tau\notag.
\end{align}
We observe that $\nabla G$ and $\nabla D$ 
\begin{align}
    |\nabla G|&=\left|\nabla^2{\rm Rm}-A\ast\tilde\nabla{\widetilde{\rm Rm}}-\tilde\nabla^2\widetilde{\rm Rm}\right|\leq C\left(\left|\nabla^2{\rm Rm}\right|+\left|\tilde\nabla^2{\widetilde{\rm Rm}}\right|+\left|A(t)\right|\right)\notag\\
    |\nabla D|&=\left|\nabla^3\mathbf{T}-A\ast\tilde\nabla^2\tilde{\mathbf{T}}-\tilde\nabla^3\tilde{\mathbf{T}}\right|\leq C\left(\left|\nabla^3\mathbf{T}\right|+\left|\tilde\nabla^3\tilde{\mathbf{T}}\right|+\left|A(t)\right|\right)\notag
\end{align}
are bounded. 
Therefore $\left|B(t)\right|_{g(t)}\leq C\cdot\epsilon$ since $s-t\leq\epsilon$.
\end{proof}
Now we estimate the evolution equation of $X$ and $Y$.
\begin{lemma}
    We have the following estimates on the evolution of $X$ and $Y$
    \begin{align}
|L X|_{g(t)}^2 & \leq C\left(|X|_{g(t)}^2+|\nabla X|_{g(t)}^2+|Y|_{g(t)}^2\right),\notag\\
\left|\frac{\partial Y}{\partial t}\right|_{g(t)}^2 & \leq C\left(|X|_{g(t)}^2+|\nabla X|_{g(t)}^2+|Y|_{g(t)}^2\right),\notag
\end{align}
where $C$ is a uniform constant.
\end{lemma}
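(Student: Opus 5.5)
Both inequalities will be checked component by component, reusing the difference computations from the proof of Lemma \ref{lemma5.2}. The tools are the formula \eqref{4.14} for differences of quantities evolving by $e^{4\phi}\beta$, the uniform bounds \eqref{uniformbdd}, the boundedness of all the difference quantities from the preceding lemma, and the interchange relations
\[
\nabla S = G + A\ast\tilde\nabla\widetilde{\rm Rm},\qquad \nabla V = D + A\ast\tilde\nabla^2\tilde{\mathbf T},\qquad \nabla U = V + A\ast\tilde{\mathbf T}.
\]
These show that any occurrence of $\nabla S,\nabla V,\nabla U$ is controlled by $|X|+|Y|$. The operator $L=\partial_t-e^{4\phi}\Delta$ is built so that the principal parts of the equations for $\mathbf T$, ${\rm Rm}$ and their derivatives cancel up to difference terms.

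\textbf{Estimate for $\partial_t Y$.} For $h,F,P,Q$, Steps 1, 3, 4, 5 already give $|\partial_t(\cdot)|\le C(|F|+|h|+|U|+|P|+|Q|+|V|+|S|)$, and this is $\le C(|X|+|Y|)$. For $A$, Step 2 gives $|\partial_tA|\le C|\Phi|+C|\nabla S|+C|\nabla V|$. By the interchange relations above, this is $\le C(|X|+|Y|)$. For $B=\nabla A$ we have $\partial_tB=\nabla\partial_tA+A\ast\nabla\partial_tg$, and estimate \eqref{nabla A} bounds $\nabla\partial_tA$ by $C|\Phi|+C|D|+C|G|+C|\nabla D|+C|\nabla G|$. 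The terms $\nabla D,\nabla G$ are components of $\nabla X$. The extra $|\nabla S|,|\nabla V|$ hidden in $|\Phi|$ (through $A$) are absorbed as before. Squaring gives \eqref{Yt}.

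\textbf{Estimate for $LX$.} For each $Z\in\{\mathbf T,\nabla\mathbf T,\nabla^2\mathbf T,{\rm Rm},\nabla{\rm Rm}\}$, the evolution equations of \cite{GMPS26}, differentiated as in Step 8, have the form $\partial_tZ=e^{4\phi}\Delta Z+e^{4\phi}\mathcal N(Z)$. Here $\mathcal N$ is a polynomial in $\mathbf T,\nabla^{\le 3}\mathbf T,\nabla^{\le 2}{\rm Rm},\varphi,\psi,g^{-1}$, together with the terms coming from commuting $\nabla$ with $\partial_t$, which involve $\nabla^{\le 2}\partial_tg$. Writing $Z-\tilde Z$, I would split
\[
L(Z-\tilde Z)=-F\ast(\tilde\Delta\tilde Z+\tilde{\mathcal N})+e^{4\phi}(\tilde\Delta\tilde Z-\Delta\tilde Z)+e^{4\phi}(\mathcal N-\tilde{\mathcal N}).
\]
The middle term is a difference of Laplacians: as in \eqref{4.19} it equals $h\ast\tilde\nabla^2\tilde Z+\nabla A\ast\tilde\nabla\tilde Z+A\ast\tilde\nabla^2\tilde Z+A\ast A\ast\tilde\nabla\tilde Z$. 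It is therefore bounded by $C(|h|+|A|+|B|)$, using that $\tilde Z$ and its derivatives are bounded. This is where $B$ must be included in $Y$. The nonlinear difference $\mathcal N-\tilde{\mathcal N}$ is telescoped term by term into products of bounded factors with one difference factor among $U,V,D,S,G,P,Q,h,F$, or $\nabla D,\nabla G$ (for instance $\nabla^3\mathbf T-\tilde\nabla^3\tilde{\mathbf T}=\nabla D+A\ast\tilde\nabla^2\tilde{\mathbf T}$). Any remaining $\nabla^k$ applied to a tilde quantity is converted via $\nabla-\tilde\nabla=A$.

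\textbf{Main obstacle.} The hardest part will be the highest-order components $D$ and $G$. Their $\mathcal N$ contains third derivatives of $\mathbf T$ and second derivatives of ${\rm Rm}$, through $\nabla^2\partial_tg$ (which includes $\nabla^2{\rm Ric}$ and $\nabla^4\phi\sim\nabla^3\mathbf T$). Each such top-order difference must be expressible as a first derivative of a component of $X$, namely $\nabla D$ or $\nabla G$, plus lower-order terms. It must not involve a genuinely new quantity such as $\nabla^4\mathbf T-\tilde\nabla^4\tilde{\mathbf T}$. I would verify this by explicitly tracking orders in the commutator $[\nabla^2,\partial_t]\mathbf T$, using $\nabla^2\phi=\nabla\mathbf T\ast\varphi+\mathbf T\ast\mathbf T\ast\psi$ from \eqref{dilatonhessian}. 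This identity lowers every derivative of $\phi$ to a derivative of $\mathbf T$ of one order less. If an unwanted top-order term does appear, the fallback is to enlarge $X$ by the next difference, e.g.\ $\nabla^3\mathbf T-\tilde\nabla^3\tilde{\mathbf T}$, and repeat the argument. After these reductions, squaring and using Cauchy--Schwarz yields \eqref{LX}.
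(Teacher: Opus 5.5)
Your proposal is correct and takes essentially the paper's route: componentwise difference estimates via \eqref{4.14} and \eqref{uniformbdd}, conversion of $\nabla$ to $\tilde\nabla$ through $A$, the Laplacian-difference identity \eqref{4.19} (which is why $B$ must be in $Y$), and \eqref{nabla A} for $\partial_t B$. The top-order check you flag for $D$ and $G$ closes without enlarging $X$, because $\nabla^2\partial_t g$ only produces differences of $\nabla^2{\rm Rm}$ and (via \eqref{dilatonhessian}) of $\nabla^3\mathbf T$, i.e.\ $\nabla G$ and $\nabla D$ up to $A$-terms; note also the minor slip that the interchange relations should read $\nabla V=D+A\ast\tilde\nabla\tilde{\mathbf T}$ and $\nabla S=G+A\ast\widetilde{\rm Rm}$, which changes nothing.
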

\begin{proof}
From Lemma \ref{lemma5.2}, the terms $h, F,A,P,Q,V,S$ satisfy this Lemma, then we only need to calculate the terms contained $B,U,D,G$. 

{\bf Step 1:} estimate $B$.

From \eqref{nabla A}, we get
\begin{align}
    \left|\frac{\partial}{\partial t}B\right|&=\left|\frac{\partial}{\partial t}\nabla A\right|\leq\left|\nabla\frac{\partial}{\partial t}A\right|+\left|A*\nabla\frac{\partial}{\partial t}g\right|\notag\\
    &\leq C|\Phi|+C|D|+C|G|+C|\nabla D|+C|\nabla G|\notag\\
    &\leq C\left(|X|+|\nabla X|+|Y|\right)\notag.
\end{align}

{\bf Step 2:} estimate $U$.

For the evolution equation of $U$, we need to recalculate it here.
\begin{align}
    \frac{\partial}{\partial t}U&=F\ast \left(\tilde{\Delta}\tilde{\mathbf{T}}+\tilde{\mathbf{T}}\ast(\widetilde{\rm Rm}+\tilde{\nabla}\tilde{\mathbf{T}}+\tilde{\mathbf{T}}\ast\tilde{\mathbf{T}})\ast(1+\tilde{\phi}+\tilde{\psi})\right)\notag\\
    &\quad +e^{4{\phi}}(\Delta\mathbf{T}-\tilde{\Delta}\tilde{\mathbf{T}})+e^{4{\phi}}\ast\mathbf{T}\ast({\rm Rm}+\nabla\mathbf{T}+\mathbf{T}\ast\mathbf{T})\ast(1+\phi+\psi)\notag\notag\\
    &\quad-e^{4{\phi}}\ast\tilde{\mathbf{T}}\ast(\widetilde{{\rm Rm}}+\tilde{\nabla}\tilde{\mathbf{T}}+\tilde{\mathbf{T}}\ast\tilde{\mathbf{T}})\ast(1+\tilde{\phi}+\tilde{\psi})\notag.
\end{align}
From \eqref{4.19}, we have
\begin{align*}
    \left|LU\right|&\leq C|F|+C|h|+C|A|+C|U|+C|P|+C|Q|\notag\\
    &\quad+C|V|+C|S|+C|\nabla V|+C|B|\notag\\
    &\leq   C\left(|X|+|\nabla X|+|Y|\right).\notag
\end{align*}

{\bf Step 3:} estimate $D$.

We know that
\begin{align}
    D=\nabla^2\mathbf{T}-\tilde\nabla^2\tilde{\mathbf{T}}=\nabla\nabla\mathbf{T}-\nabla\tilde{\nabla}\tilde{\mathbf{T}}+\nabla\tilde{\nabla}\tilde{\mathbf{T}}-\tilde{\nabla}\tilde{\nabla}\tilde{\mathbf{T}}=\nabla V+A\ast \tilde{\nabla}\tilde{\mathbf{T}}\notag.
\end{align}
Thus, before we estimate $D$, we should calculate $\nabla(\partial_{t}V)$
\begin{align}
    \nabla(\partial_{t}V)&= \nabla (F\ast(\tilde{\Delta}\tilde{\nabla} \tilde{\mathbf{T}}+(\tilde{\nabla}\widetilde{\rm Rm}\ast\tilde{\mathbf{T}}+\widetilde{\rm Rm}\ast\tilde{\nabla}\tilde{\mathbf{T}}+\widetilde{\rm Rm}\ast\tilde{\mathbf{T}}\ast\tilde{\mathbf{T}})\ast(1+\tilde{\varphi}+\tilde{\psi})))\notag\\
    &\quad +\nabla (F\ast(\tilde{\nabla}^{2}\tilde{\mathbf{T}}\ast\tilde{\mathbf{T}}+\tilde{\nabla}\tilde{\mathbf{T}}\ast\tilde{\nabla}\tilde{\mathbf{T}}+\tilde{\nabla}\tilde{\mathbf{T}}\ast\tilde{\mathbf{T}}\ast\tilde{\mathbf{T}}+\tilde{\mathbf{T}}^{4})\ast(1+\tilde{\varphi}+\tilde{\psi}))\notag\\
    &\quad +\nabla (e^{4{\phi}}\ast(\Delta\nabla \mathbf{T}-\tilde{\Delta}\tilde{\nabla} \tilde{\mathbf{T}}))+\nabla (V\ast \partial_{t}g+\tilde{\nabla}\tilde{\mathbf{T}}\ast\partial_{t}{h})\notag\\
    &\quad +\nabla (e^{4{\phi}}\ast(\nabla{\rm Rm}\ast\mathbf{T}+{\rm Rm}\ast\nabla\mathbf{T}+{\rm Rm}\ast\mathbf{T}\ast\mathbf{T})\ast(1+\varphi+\psi))\notag\\
    &\quad -\nabla (e^{4{\phi}}\ast(\tilde{\nabla}\widetilde{\rm Rm}\ast\tilde{\mathbf{T}}+\widetilde{\rm Rm}\ast\tilde{\nabla}\tilde{\mathbf{T}}+\widetilde{\rm Rm}\ast\tilde{\mathbf{T}}\ast\tilde{\mathbf{T}})\ast(1+\tilde{\varphi}+\tilde{\psi}))\notag\\
    &\quad +\nabla (e^{4{\phi}}\ast(\nabla^{2}\mathbf{T}\ast\mathbf{T}+\nabla\mathbf{T}\ast\nabla\mathbf{T}+\nabla\mathbf{T}\ast\mathbf{T}\ast\mathbf{T}+\mathbf{T}^{4})\ast(1+\varphi+\psi))\notag\\
    &\quad -\nabla (e^{4{\phi}}\ast(\tilde{\nabla}^{2}\tilde{\mathbf{T}}\ast\tilde{\mathbf{T}}+\tilde{\nabla}\tilde{\mathbf{T}}\ast\tilde{\nabla}\tilde{\mathbf{T}}+\tilde{\nabla}\tilde{\mathbf{T}}\ast\tilde{\mathbf{T}}\ast\tilde{\mathbf{T}}+\tilde{\mathbf{T}}^{4})\ast(1+\tilde{\varphi}+\tilde{\psi}))\notag
\end{align}
Therefore we have
\begin{align}
    |\nabla(\partial_t V)-e^{4\phi}\Delta D|&\leq C|\Phi|+C|B|+C|D|+C|G|+C|\nabla D|+C|\nabla G|\notag\\
    &\leq C\left(|X|+|\nabla X|+|Y|\right)\notag.
\end{align}
which implies
\begin{align}
    |\partial_tD-e^{4\phi}\Delta D|&=|\nabla(\partial_{t}V)-e^{4\phi}\Delta D|+|V\ast\nabla\partial_{t} g|+|\partial_{t}A\ast\tilde{\nabla}\tilde{\mathbf{T}}|+|A\ast\partial_{t}\tilde{\nabla}\tilde{\mathbf{T}}|\notag\\
    &\leq C\left(|X|+|\nabla X|+|Y|\right)\notag.
\end{align}

{\bf Step 4:} estimate $G$.

With the same discussion in Step 3, we give the following estimate of $\nabla\partial_{t}S$
\begin{align}
    |\nabla\partial_{t}S-e^{4\phi}\Delta G| &\leq C|\Phi|+C|B|+C|D|+C|G|+C|\nabla D|+C|\nabla G|\notag\\
    &\leq C\left(|X|+|\nabla X|+|Y|\right)\notag.
\end{align}
Then, we get
\begin{align}
    |\partial_tG-e^{4\phi}\Delta G|&=|\nabla(\partial_{t}S)-e^{4\phi}\Delta G|+|S\ast\nabla\partial_{t} g|+|\partial_{t}A\ast\widetilde{\rm Rm}|+|A\ast\partial_{t}\widetilde{\rm Rm}|\notag\\
    &\leq C\left(|X|+|\nabla X|+|Y|\right)\notag.
\end{align}
Together with the above, we prove this lemma.
\end{proof}
\begin{theorem}
    Suppose $(\psi(t),\phi(t)),(\tilde{\psi}(t),\tilde{\phi}(t))$ are  two solutions to the heterotic $G_{2}$ flow $\eqref{heterotic flow}$ on a compact manifold $M$ for $t\in[0,\epsilon],\epsilon>0$. If 
    $$(\psi(t),\phi(t))=(\tilde{\psi}(t),\tilde{\phi}(t))$$
    for some $t\in[0,\epsilon]$, then $(\psi(s),\phi(s))=(\tilde{\psi}(s),\tilde{\phi}(s))$ for all $s\in[0,t]$.
\end{theorem}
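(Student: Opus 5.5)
The proof will apply the backward uniqueness theorem, Theorem \ref{thm5.4}, on the interval $[0,t]$ (rename the coincidence time $t$ as the terminal time $\epsilon$ of that theorem). Take $X=U\oplus V\oplus D\oplus S\oplus G$ and $Y=P\oplus Q\oplus h\oplus A\oplus B\oplus F$ as in \eqref{X(t)}--\eqref{Y(t)}. The hypothesis $(\psi(t),\phi(t))=(\tilde\psi(t),\tilde\phi(t))$ gives $\varphi(t)=\tilde\varphi(t)$ and $g(t)=\tilde g(t)$, since $\psi$ determines $\varphi$ together with the orientation. Hence $\nabla=\tilde\nabla$ at time $t$, and therefore $\mathbf{T}=\tilde{\mathbf{T}}$, ${\rm Rm}=\widetilde{\rm Rm}$, and all of their covariant derivatives agree there. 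So $X(t)\equiv0$ and $Y(t)\equiv0$. If the theorem yields $X\equiv Y\equiv0$ on $[0,t]$, then $P\equiv0$ and $F\equiv0$. Since $e^{4\phi}-e^{4\tilde\phi}=0$ forces $\phi=\tilde\phi$, this gives exactly $(\psi(s),\phi(s))=(\tilde\psi(s),\tilde\phi(s))$ for all $s\in[0,t]$.

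It remains to verify the hypotheses of Theorem \ref{thm5.4}.

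\emph{Metric hypotheses.} By \eqref{metricevolution} and \eqref{uniformbdd}, $|\partial_t g|$ and $|\nabla\partial_t g|$ are bounded, because they involve at most $\nabla{\rm Ric}$, $\nabla^3\phi$ (controlled by $\nabla^2\mathbf T$ through \eqref{dilatonhessian}) and $\nabla E$. The quantity $\partial_t(e^{4\phi}g^{-1})=e^{4\phi}(4\partial_t\phi\, g^{-1}-g^{-1}\partial_t g\, g^{-1})$ is bounded because $\partial_t\phi$ is bounded by the second equation of \eqref{heterotic flow}. Also $\nabla(e^{4\phi}g^{-1})=4e^{4\phi}d\phi\otimes g^{-1}$ is bounded by \eqref{dilatonbounds}. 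Uniform ellipticity $\alpha_1 g^{ij}\le e^{4\phi}g^{ij}\le\alpha_2 g^{ij}$ holds with $\alpha_1=e^{-4\sup|\phi|}$ and $\alpha_2=e^{4\sup|\phi|}$. Completeness is automatic on a compact $M$, and ${\rm Ric}\ge -Kg$ follows from \eqref{uniformbdd}.

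\emph{Growth condition.} This follows from the preceding lemma, which bounds $U,V,S,P,Q,h,A,F,B,D,G$ uniformly. The same argument, using \eqref{uniformbdd} at one further derivative order and the bound on $A$, also bounds $\nabla X$. Compactness then lets us take $a$ arbitrary and $A$ a constant.

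\emph{Differential inequalities.} These are the content of the lemma just proved, which gives $|LX|^2+|\partial_tY|^2\le C(|X|^2+|\nabla X|^2+|Y|^2)$. Two points need checking in the reduction. First, the components $V$ and $S$ from Lemma \ref{lemma5.2} satisfy $|LV|\le C(|X|+|\nabla X|+|Y|)$. The extra divergence terms ${\rm div}\,\mathfrak V$ and ${\rm div}\,\mathfrak S$ are bounded by $C(|h|+|A|+|\nabla h|+|\nabla A|)$, with $|\nabla h|\le C|A|$ and $\nabla A=B$, so they fall into $Y$. Second, the gradient terms $|\nabla V|$ and $|\nabla S|$ are bounded by $C(|D|+|A|)$ and $C(|G|+|A|)$ respectively, as in the preceding proof. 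In the $Y$-inequality, the only dangerous term is the second-derivative quantity in $\partial_tA$, namely $\nabla{\rm Ric}$, $\nabla^3\phi$ and $\nabla E$. These are controlled by $|G|+|D|+|A|$, so they sit in $X\oplus Y$. The term $\partial_t B$ brings in $\nabla D$ and $\nabla G$, which are components of $\nabla X$, so this is admissible.

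\emph{Main obstacle.} The hardest step is the bookkeeping showing that every term is controlled by $|X|+|\nabla X|+|Y|$ and never by second derivatives of $X$. In particular, in $LD$ and $LG$ the top-order terms must combine into exactly $e^{4\phi}\Delta D$ and $e^{4\phi}\Delta G$. The commutator terms $\nabla\Delta-\Delta\nabla$ and the differences $\Delta-\tilde\Delta$ applied to $\tilde\nabla^2\tilde{\mathbf T}$ and $\tilde\nabla\widetilde{\rm Rm}$ then produce only $A$, $B$ and $h$ times bounded tensors. Once this structural check is done, Theorem \ref{thm5.4} gives $X\equiv Y\equiv0$ on $[0,t]$, which completes the proof.
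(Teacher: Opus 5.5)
Your proposal is correct and follows the paper's argument. You apply the backward uniqueness result, Theorem~\ref{thm5.4}, on $[0,t]$ with the same choice $X=U\oplus V\oplus D\oplus S\oplus G$ and $Y=P\oplus Q\oplus h\oplus A\oplus B\oplus F$, and you check its hypotheses from \eqref{uniformbdd} and the two preceding lemmas. Your write-up is more explicit than the paper's on a few points: $X(t)=Y(t)=0$ via the fixed orientation, $\nabla h=A\ast\tilde g$, and recovering $\phi=\tilde\phi$ from $F=0$.
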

\begin{proof}
    Since $M$ is compact and from the estimates \eqref{uniformbdd}, we have demonstrated that all of the conditions in Theorem \ref{thm5.4} are satisfied.
Hence, if $\psi(s)=\tilde{\psi}(s)$ and $\phi(s)=\tilde\phi(s)$ at some time $s \in[0, \epsilon]$, 
then $\psi(t)=\tilde\psi(t)$ and $\phi(t)=\tilde\phi(t)$ for all $t \in[0, s]$.
\end{proof}


\textbf{Acknowledgments.}\ \ 
 The first author is supported by China Postdoctoral Science Foundation (2026M793350). 
The second author is funded by Shanghai Institute for Mathematics and Interdisciplinary Sciences (SIMIS) under grant number SIMIS-ID-2025-AD.  The authors would also like to thank the referee for the valuable comments and suggestions.

\bibliographystyle{amsplain}

\end{document}